\newtheoremstyle{mytheoremstyle} 
    {5pt}                    
    {5pt}                    
    {\itshape}                   
    {\parindent}                           
    {\bf}                   
    {.}                          
    {.5em}                       
    {}  
\theoremstyle{mytheoremstyle}
\newtheorem{theorem}{Theorem}[section]
\newtheorem{lemma}[theorem]{Lemma}
\newtheorem{corollary}[theorem]{Corollary}
\newtheoremstyle{mytdefintionstyle} 
    {5pt}                    
    {5pt}                    
    {\rm}                   
    {\parindent}                           
    {\bf}                   
    {.}                          
    {.5em}                       
    {}  
\theoremstyle{remark}
\newtheorem{remark}[theorem]{Remark}
\theoremstyle{mytdefintionstyle}
\newtheorem{definition}[theorem]{Definition}
\newtheorem{example}[theorem]{Example}
\newtheorem{construction}[theorem]{Construction}
\newtheorem{notation}[theorem]{Notation}
\newtheorem*{notationnonumber}{Notation}
\newtheorem*{acknowledgments}{Acknowledgments}
\newtheoremstyle{exmp_contd} 
{\topsep} {\topsep}%
{\upshape}
{}
{\bfseries}
{}
{ }
{\thmname{#1}\,\thmnumber{ #2}\thmnote{#3}\enspace(continued)}
\theoremstyle{exmp_contd}
\definecolor{ExQ}{HTML}{0000FF}
\definecolor{Dec}{HTML}{E07B00}
\newcommand{\CapPkg}{\textsc{Cap}\xspace}
\newcommand{\Ainf}{\texorpdfstring{$A_{\infty}$}{A-infinity}}
\newcommand{\AC}{\mathbf{A}}
\newcommand{\BC}{\mathbf{B}}
\newcommand{\GC}{\mathbf{G}}
\newcommand{\QC}{\mathbf{Q}}
\newcommand{\TC}{\mathbf{T}}
\newcommand{\N}{\mathbb{N}}
\newcommand{\Z}{\mathbb{Z}}
\newcommand{\Pro}{\mathbb{P}}
\newcommand{\CH}{\mathrm{H}}
\newcommand{\OS}{\mathcal{O}}
\newcommand{\Coh}{\mathfrak{Coh}}
\newcommand{\Ch}{\mathrm{Ch}}
\newcommand{\Db}{\mathrm{D}^{\mathrm{b}}}
\newcommand{\Kb}{\mathrm{K}^{\mathrm{b}}}
\newcommand{\KCh}{\mathrm{K}}
\newcommand{\Chb}{\mathrm{Ch}^{\mathrm{b}}}
\newcommand{\Chbdg}{\mathrm{Ch}_{\mathrm{dg}}^{\mathrm{b}}}
\newcommand{\Dbdg}{\mathrm{D}_{\mathrm{dg}}^{\mathrm{b}}}
\newcommand{\LDer}{\mathbf{L}}
\newcommand{\RDer}{\mathbf{R}}
\newcommand{\otimesL}{\stackrel{\LDer}{\otimes}}
\newcommand{\IHomR}{ \RDer\mathrm{\underline{Hom}}}
\newcommand{\opC}{\mathrm{op}}
\newcommand{\Obj}{\mathrm{Obj}}
\DeclareMathOperator{\dgCat}{\mathbf{dgCat}}
\DeclareMathOperator{\Hqe}{\mathbf{Hqe}}
\DeclareMathOperator{\HCat}{\mathbf{HCat}}
\newcommand{\dg}{\mathrm{dg}}
\DeclareMathOperator{\Hzero}{\mathrm{H}^0}
\newcommand{\id}{\mathrm{id}}
\DeclareMathOperator{\End}{\mathrm{End}}
\DeclareMathOperator{\Hom}{\mathrm{Hom}}
\DeclareMathOperator{\IHom}{\mathrm{\underline{Hom}}}
\DeclareMathOperator{\Ext}{\mathrm{Ext}}
\newcommand{\pretr}{\mathrm{pretr}}
\newcommand{\Twist}{\mathrm{Twist}}
\newcommand{\Modl}{\text{-}\mathrm{Mod}}
\newcommand{\Modr}{\mathrm{Mod}\text{-}}
\newcommand{\Diag}{\mathrm{Diag}}
\newcommand{\degdown}[1]{ \underline{#1} }
\newcommand{\degup}[1]{ \overline{#1} }
\newcommand{\transleft}[1]{ \tensor[_{[#1]}]{{(-)}}{} }
\newcommand{\transright}[1]{\tensor[]{{(-)}}{_{[#1]}} }
\newcommand{\trans}[3]{ \tensor[_{[#1]}]{{#2}}{_{[#3]}} }
\newcommand{\twistcleft}[1]{ \tensor[_{  \{#1\}  }]{{(-)}}{} }
\newcommand{\twistcright}[1]{\tensor[]{{(-)}}{_{  \{#1\} }} }
\newcommand{\twistc}[3]{ \tensor[_{ \{#1\} }]{{#2}}{_{ \{#3\} }} }
\newcommand{\twistcobj}[2]{ {#1}\{{#2}\} }
\newcommand{\resclass}[1]{ [{#1}] }
\newcommand{\BLineBundles}[1]{ \mathbb{B}^{#1} }
\newcommand{\BLineBundlesZero}[1]{  {\mathbb{B}_{0}^{#1}} }
\newcommand{\LineBundles}{ \mathbb{B} }
\newcommand{\LineBundlesab}{ \BLineBundles{a,b} }
\newcommand{\LineBundlesabdouble}{ \widehat{\mathbb{B}}^{a,b} }
\tikzset{round left paren/.style={ncbar=0.5cm,out=120,in=-120}}
\tikzset{round right paren/.style={ncbar=0.5cm,out=60,in=-60}}
\newcolumntype{C}[1]{>{\centering\arraybackslash$}p{#1}<{$}}
\newlength{\mycolwd}
\definecolor{lightgray}{gray}{0.8}
\newcolumntype{L}{>{\raggedleft}p{0.28\textwidth}}
\newcolumntype{R}{p{0.8\textwidth}}
\definecolor{ctcolor}{gray}{0.95}
\definecolor{ctucolor}{gray}{0.85}
\newcommand{\thickhline}{%
    \noalign {\ifnum 0=`}\fi \hrule height 1pt
    \futurelet \reserved@a \@xhline
}
\newcolumntype{"}{@{\hskip\tabcolsep\vrule width 1pt\hskip\tabcolsep}}
\author{Sebastian Posur}
\thanks{The author is supported by Deutsche Forschungsgemeinschaft (DFG) grant SFB-TRR 195: \emph{Symbolic Tools in Mathematics and their Application}}
\address{Department of mathematics, University of Siegen, 57072 Siegen, Germany}
\email{\href{mailto:Sebastian Posur <sebastian.posur@uni-siegen.de>}{sebastian.posur@uni-siegen.de}}
\begin{document}

\title[A constructive approach to Fourier-Mukai transforms]{A constructive approach to Fourier-Mukai transforms for projective spaces via \Ainf-functors between pretriangulated dg categories}

\begin{abstract}
We discuss the following problem: 
how can an arbitrary
Fourier-Mukai transform 
$\phi: \mathrm{D}^b( \mathbb{P}^a ) \rightarrow \mathrm{D}^b( \mathbb{P}^b )$
between the bounded derived categories 
of two projective spaces of dimensions $a$ and $b$
be expressed in explicit terms as an exact functor between
the homotopy categories 
$\mathrm{K}^b( \mathbb{B}^a ) \rightarrow \mathrm{K}^b( \mathbb{B}^b )$
generated by the full strong exceptional sequences of the line bundles
$\mathbb{B}^a = \{\mathcal{O}(-a), \dots, \mathcal{O}\}$ and $\mathbb{B}^b = \{\mathcal{O}(-b), \dots, \mathcal{O}\}$?
We show that this problem can be 
reduced to the following task which is independent of any prescribed Fourier-Mukai kernel:
finding an \Ainf-functor $P$ in explicit terms whose induced functor on homotopy categories
yields the embedding of
$\{ \mathcal{O}(i) \boxtimes \mathcal{O}(j) \mid i = -2a, \dots, 0, ~~j = -2b, \dots 0 \}$
into $\Db( \Pro^{a} \times \Pro^{b})$.

As our main technical tool we provide an explicit formula for the lift of an \Ainf-functor
$F: \AC \rightarrow \BC$ between a dg category $\AC$ and a pretriangulated dg category $\BC$
to the pretriangulated hull of $\AC$
given by the universal property of pretriangulated hulls.
As a further application of this tool, we provide a simple example of
two non-isomorphic exact functors between triangulated categories that coincide on
the full subcategory generated by a full strong exceptional sequence.
\end{abstract}

\keywords{%
Fourier-Mukai transforms, pretriangulated categories, A-infinity functors%
}
\subjclass[2010]{%
14F05, 
18E30
}
\maketitle
\setcounter{tocdepth}{4}
\tableofcontents

\section{Introduction}\label{section:introduction}

Fourier-Mukai transforms were introduced by Mukai in \cite{Mukai81}
for the study of abelian varieties.
They form a very useful class of exact functors 
$\phi: \Db( X ) \rightarrow \Db( Y )$
between the bounded derived categories of smooth projective varieties $X,Y$ over a field $k$.
Lots of exact functors which occur naturally in algebraic geometry are Fourier-Mukai transforms,
in particular all fully faithful exact functors by Orlov’s representability theorem \cite{Orl97},
but also counter-examples are part of the discussion \cite{CSdg17, BNR19}.
In any case, Fourier-Mukai transforms provide a widely used tool in modern algebraic geometry \cite{Huy_FM}.

It is a challenge in computer algebra to render Fourier-Mukai transforms constructive, which means
to find data structures for objects and morphisms in $\Db( X )$, $\Db( Y )$,
and an algorithm for computing $\phi$ on a specific input.
In the case of products of projective spaces, the direct image functor of a projection to a factor
can be computed using the machinery of Tate resolutions \cite{EESProducts15, EESProductsMc2},
where the data structure for the objects is based on finitely presented modules over the homogeneous coordinate ring.

For a projective space $\Pro^n$ of dimension $n$ over $k$, let $\BLineBundles{n}$ denote the full subcategory of $\Db( \Pro^n )$
spanned by the line bundles $\{\OS(-i)\}_{i = 0,\dots,n}$.
Then the homotopy category $\Kb( \BLineBundles{n} )$ of complexes whose objects consist of direct sums of line bundles in $\BLineBundles{n}$
is equivalent to $\Db( \Pro^n )$ \cite{Bei78}. In fact, the objects in $\BLineBundles{n}$ yield a full strong exceptional collection,
and thus generate $\Db( \Pro^n )$ as a triangulated category.
In this paper, we study the question of how to express a given Fourier-Mukai transform $\phi: \Db( \Pro^a ) \rightarrow \Db( \Pro^b )$
for $a,b \in \N_0$
in explicit terms as an exact functor $\Phi: \Kb( \BLineBundles{a} ) \rightarrow \Kb( \BLineBundles{b} )$,
motivated by the fact that the category $\Kb( \BLineBundles{a} )$ allows for explicit computations.

It is a tempting idea that it should suffice to describe $\Phi$ only on some kind of generators (e.g., the subcategory $\BLineBundles{a}$)
and then ``naturally'' extend $\Phi$ to the whole category $\Kb( \BLineBundles{a} )$.
In fact, we prove in Section \ref{section:products} that two of three ingredients of a Fourier-Mukai transform defining $\phi$
(namely the pullback and the direct image)
can indeed be modeled by following this tempting idea in a naive manner, see Subsections \ref{subsection:pullback} and \ref{subsection:direct_image}.
However, for the third ingredient, the tensor product, this strategy fails, see Subsection \ref{subsection:difficulty}.

The failure of naively defining $\Phi$ on generators originates from the well-known
deficiency of triangulated categories: cones are not functorial.
To overcome this failure, dg enhancements of triangulated categories were invented \cite{BonKap90}.
Roughly, a dg category $\AC$ is a category whose homomorphism sets are given by complexes,
and passing to the $0$-th cohomology yields an ordinary category $\Hzero( \AC )$, called the homotopy category of $\AC$.
A dg category is called pretriangulated if $\Hzero( \AC )$ is triangulated in a natural sense.
In our case, $\Kb( \BLineBundles{a} )$ has a dg enhancement given by $\Chbdg( \BLineBundles{a} )$, the pretriangulated dg category of complexes
whose objects are direct sums of line bundles in $\BLineBundles{a}$.
The appropriate notion of a homotopy adapted functor between dg categories is provided by the machinery of \Ainf-categories.
We summarize the theory of dg categories and \Ainf-functors for our purposes in Section \ref{section:dg_ainf_theory}.

In the language of pretriangulated dg categories and \Ainf-functors,
specifying functors simply on generators works as expected.
If we have an \Ainf-functor $F: \AC \rightarrow \BC$ from a dg category into a pretriangulated dg category,
it can be lifted to an \Ainf-functor
\[
    F^{\sharp}: \pretr(\AC) \rightarrow \BC
\]
starting from the so-called pretriangulated hull $\pretr(\AC)$ of $\AC$
in an essentially unique manner (up to quasi-equivalence).
In Section \ref{section:constructive}, we provide
explicit constructions of both the dg category $\pretr( \AC )$ (Subsection \ref{subsubsection:pretr})
and an explicit formula for its universal property (Theorem \ref{theorem:lift_theorem}).
The degree of explicitness in our exposition is chosen on a level such that both a computer implementation
and an understanding on the human level is possible, in order to facilitate
computations with such functors in the future.
In particular, we build up $\pretr( \AC )$ by means of category constructors applied to $\AC$:
the completion by direct sums $\AC \mapsto \AC^{\oplus}$ (Subsection \ref{subsubsection:direct_sums}),
the completion by translations $\AC \mapsto \AC^{[\bullet]}$ (Subsection \ref{subsubsection:translations}),
and the completion by twisted complexes $\AC \mapsto \Twist( \AC )$ (Subsection \ref{subsubsection:twisted_complexes}).
Then $\pretr( \AC )$ arises as the dg subcategory of $\Twist( (\AC^{[\bullet]})^{\oplus} )$ formed by so-called one-sided twisted complexes.
We also follow this tower of category constructors in order to lift $F$ to the desired $F^{\sharp}$,
i.e., first we lift $F$ to the completion by direct sums (Subsection \ref{subsubsection:lift_direct_sums}),
second to the completion by translations (Subsection \ref{subsubsection:lift_translations}),
and last to the pretriangulated hull (Subsection \ref{subsubsection:lift_pretr}).
The descriptions of these lifts form the technical heart of this paper.

As a first application of our formulas,
we give a short example of triangulated categories $\TC_1$, $\TC_2$
and two exact functors $F,G: \TC_1 \rightarrow \TC_2$
which coincide on the full subcategory spanned by a full strong exceptional sequence,
but for which we have $F \not\simeq G$ (Corollary \ref{corollary:warning}).
The provided counter-example makes the idea that concrete choices of homotopies
may actually matter within the application of $F$ or $G$ to an object in $\TC_1$
very tangible, and thus fits nicely into the general philosophy that ``proof data'' may be actually relevant
within mathematical constructions \cite{hottbook}.

In our second application, we come back to our original problem of modeling Fourier-Mukai transformations for projective spaces.
In Theorem \ref{theorem:fm_for_pn}
we state that modeling a Fourier-Mukai transform $\phi: \Db( \Pro^a ) \rightarrow \Db( \Pro^b )$
in terms of a functor $\Kb( \BLineBundles{a} ) \rightarrow \Kb( \BLineBundles{b} )$ boils down to a single ingredient, namely
an \Ainf-functor
\[
    P: \LineBundlesabdouble \rightarrow \pretr( \LineBundlesab )
\]
from the full subcategory $\LineBundlesabdouble \subseteq \Db( \Pro^a \times \Pro^b )$ generated by the line bundles
$\OS(i) \boxtimes \OS(j)$ for $i = -2a, \dots, 0$, $j = -2b, \dots 0$,
to the pretriangulated hull of the 
full subcategory $\LineBundlesab \subseteq \Db( \Pro^a \times \Pro^b )$ generated by the line bundles
$\OS(i) \boxtimes \OS(j)$ for $i = -a, \dots, 0$, $j = -b, \dots 0$,
such that $\Hzero( P )$ corresponds to the natural inclusion $\LineBundlesabdouble \hookrightarrow \Db( \Pro^a \times \Pro^b )$.

\begin{notationnonumber}
    We fix a field $k$.
    Whenever we deal with a $\Z$-graded $k$-vector space $V$ we will simplify terminology as follows:
    \begin{enumerate}
        \item We call such a $V$ simply a \emph{graded space}, and likewise a $\Z$-graded $k$-linear map between graded spaces a \emph{graded map}.
              We refer to the degree $d \in \Z$ component of $V$ by $V^d$.
        \item By an element $v \in V$ we always mean a \emph{homogeneous} element, i.e.,
              an element $v \in V^d$ for some $d \in \Z$. We write $\abs{v} := d$ for its degree.
        \item Whenever we define or deal with a graded map, we will use or write down application rules $v \mapsto \alpha(v)$ only for the homogeneous elements.
    \end{enumerate}
    Given two graded spaces $V$, $W$,
    their \emph{tensor product} is given by the graded space
    \[ V \otimes W := \bigoplus_{i \in \Z}\left( \bigoplus_{d}( V^d \otimes W^{i-d}) \right).\]
    We use \emph{Fukaya's sign convention}:
    suppose given two graded maps $\alpha: V \rightarrow V'$, $\beta: W \rightarrow W'$,
    their tensor product is defined by
    \[
    (\alpha \otimes \beta)( v \otimes w ) := (-1)^{\abs{v} \abs{\beta}}\alpha( v ) \otimes \beta( w )
    \]
    for all $v \in V$, $w \in W$.
    Since the theory presented in this paper heavily relies on a correct management of signs,
    we need to introduce explicit notations for shifts:
    the shift $SV$ of a graded space $V$ is given by $(SV)^d := V^{d+1}$.
    An element $v \in V^d$ can be considered as an element in $(SV)^{d-1}$,
    and we will make this explicit via the graded map
    \[
        \degdown{(-)}: V \rightarrow SV: v \mapsto v
    \]
    of degree $-1$.
          Thus, whenever we have $v \in V$ and want to consider it inside
          $SV$, we will write $\degdown{v} \in SV$ (we lower the degree: $\abs{\degdown{v}} = \abs{v} - 1$).
          Applying the inverse of this map to an element $w \in SV$ will be denoted by $\degup{w}$
          (we increase the degree: $\abs{\degup{w}} = \abs{w} + 1$).
          
    We will write $k$-category for a category enriched over $k$-vector spaces.
    All functors between $k$-categories are assumed to be $k$-linear.
    If $\AC$ is a $k$-category, a dg category, or an \Ainf-category, and $A,B \in \AC$ objects,
    we use the notations $\Hom_{\AC}(A,B)$, $(A,B)_{\AC}$ or simply $(A,B)$ in order to refer to the homomorphisms between $A$ and $B$,
    endowed with their appropriate extra structure (a $k$-linear space or a cochain complex).
    A $k$-category $\AC$ can always be regarded as a dg category (Construction \ref{construction:k_linear_to_dg}),
    which in turn can be regarded as an \Ainf-category (Construction \ref{construction:dg_to_Ainf}).
    We will tacitly make use of these conversions throughout the paper, e.g., when we speak about \Ainf-functors between dg categories.
    
    We use cohomological conventions, in particular, by a complex we mean a cochain complex.
    The homotopy class of a cochain map $\alpha$ is denoted by $\resclass{\alpha}$.
    If $\AC$ is a $k$-category, we denote by $\Chb( \AC )$
    the category of bounded complexes whose objects consist of formal direct sums of objects in $\AC$,
    and whose differentials are given by matrices whose entries take values in the morphisms in $\AC$.
    If $\AC$ already has direct sums, $\Chb( \AC )$ is the usual category of bounded complexes.
    We denote the corresponding homotopy category by $\Kb( \AC )$.
    
    We denote the set of natural numbers including zero by $\N_0$.
\end{notationnonumber}

\begin{acknowledgments}
    The author wants to thank Bernhard Keller for his time to discuss the universal property of pretriangulated hulls.
\end{acknowledgments}

\section{Fourier-Mukai transforms for projective spaces}\label{section:products}

A famous theorem by Be{\u\i}linson \cite{Bei78} implies that the set of line bundles
$\{\OS(-i)\}_{i = 0,\dots,n}$ on the projective space $\Pro^n$ of dimension $n \in \N_0$ over $k$
generate the bounded derived category of coherent sheaves $\Db( \Pro^n )$ in a particularly nice way:
they form a so-called \emph{strong full exceptional collection} \cite[Chapter 8.3]{Huy_FM}.
Let $\BLineBundles{n}$ denote the full subcategory of $\Db( \Pro^n )$ spanned by this set of line bundles.
It follows that if we denote by 
$\Chb(\BLineBundles{n})$
the category of bounded complexes
whose objects consist of directs sums of bundles in $\BLineBundles{n}$,
and by $\Kb( \BLineBundles{n} )$ its homotopy category,
then the composition of the natural functors
\[
    \Kb( \BLineBundles{n} ) \hookrightarrow \Kb( \Pro^n ) \rightarrow \Db( \Pro^n )
\]
is an exact equivalence of triangulated categories, where $\Kb( \Pro^n)$ denotes the homotopy category of complexes of coherent sheaves on $\Pro^n$.

Whenever two smooth projective varieties have strong full exceptional collections of vector bundles,
forming external tensor products yields again a strong full exceptional collection for the product space \cite[Proposition 2.1.18]{Boehning06}.
This fact implies that
a product of projective spaces $\Pro^{\underline{n}} := \bigtimes_{i = 1}^l\Pro^{n_i}$
(for $l \in \N_0$ and $\underline{n} := (n_i)_i \in \N_0^l$)
admits a strong full exceptional collection given by the line bundles
$\boxtimes_{i = 1}^l \OS(s_i)$ for $s_i \in \{-n_i, \dots, 0\}$,
where the external tensor product $\boxtimes$ is defined
as 
\[
    \boxtimes_{i = 1}^l \OS(s_i) := \bigotimes_{i = 1}^l (p_i^{\ast} \OS(s_i))
\]
with $p_i$ denoting the $i$-th projection $\Pro^{\underline{n}} \longrightarrow \Pro^{n_i}$.
Again, we let $\BLineBundles{\underline{n}}$ denote the full subcategory of $\Db( \Pro^{\underline{n}} )$ 
spanned by this strong full exceptional collection.
Then the composition
\begin{equation}\label{equation:equivalence_kb_db}
    \iota := (\Kb( \BLineBundles{\underline{n}} ) \hookrightarrow \Kb( \Pro^{\underline{n}} ) \rightarrow \Db( \Pro^{\underline{n}} ))
\end{equation}
defines an exact equivalence.

In contrast to the classical construction of $\Db( \Pro^{\underline{n}} )$ as a categorical localization
at quasi-isomorphisms,
the triangulated category $\Kb( \BLineBundles{\underline{n}} )$ is constructed in far more explicit manner
and thus may serve as a constructive model for performing explicit computations.
Taking this computational point of view, it is natural to ask if prominent exact functors
 $\Db( \Pro^{\underline{n}} ) \rightarrow \Db( \Pro^{\underline{m}} )$ (for $j \in \N_0$, and $\underline{m} := (m_i)_i \in \N_0^j$),
regarded as functors $\Kb( \BLineBundles{\underline{n}} ) \rightarrow \Kb( \BLineBundles{\underline{m}} )$ via the equivalence \eqref{equation:equivalence_kb_db},
also admit a more explicit description.

The most prominent class of exact functors appearing in algebraic geometry are Fourier-Mukai transforms \cite[Chapter 5]{Huy_FM}.
\begin{definition}
    Let $X, Y$ be smooth projectives varieties over a field $k$.
    We denote by $\Db(X)$ the bounded derived category of $\Coh( X )$, the category of coherent sheaves on $X$.
    Let $K \in \Db( X \times Y )$ (the \emph{Fourier-Mukai kernel}).
    We define its induced \emph{Fourier-Mukai transform}
    \[
        \phi_K
        \coloneqq
        \RDer( {q}_{\ast} ) (K \otimesL p^{\ast}(-)):
        \Db( X ) \rightarrow \Db( Y )
    \]
    with $p: X \times Y \rightarrow X$ and $q: X \times Y \rightarrow Y$ the natural projections.
    An exact functor $F: \Db( X ) \rightarrow \Db(Y)$ is said to be of \emph{Fourier-Mukai type}
    if $F \simeq \phi_K$ for some $K  \in \Db( X \times Y )$.
\end{definition}
In other words, every Fourier-Mukai transform is the composition 
of three kinds of exact functors: a pullback functor of a projection, a tensor product, and a direct image functor of a projection
(all interpreted in a derived sense, with $p^{\ast}$ already being an exact functor between the corresponding categories of coherent sheaves). 
Thus, a computational modeling of Fourier-Mukai transforms can be reduced to the modeling of each of these three types of functors.
In a computational model of $\Db( \Pro^{\underline{n}} )$ relying on graded modules over the graded coordinate ring of $\Pro^{\underline{n}}$,
expressing the direct image functor explicitly is the most involved of the above three tasks \cite{EESProducts15}.
In contrast, we will see that using $\Kb( \BLineBundles{\underline{n}} )$ as a computational model,
the hardest task will be the modeling of the tensor product.

\subsection{A simple approach for modeling exact functors}\label{subsection:too_simple}

We describe a simple approach for the creation of exact functors
$\Kb( \BLineBundles{\underline{n}} ) \rightarrow \Kb( \BLineBundles{\underline{m}} )$.
Let $\BLineBundlesZero{\underline{n}}$ denote the full subcategory of $\Db( \BLineBundles{\underline{n}} )$
generated by the line bundles in $\BLineBundles{\underline{n}}$ together with the zero object 
(which we need to include in order to make an easy construction of the direct image functor possible in Subsection \ref{subsection:direct_image}).
Every functor
\[
    f: \BLineBundlesZero{\underline{n}} \longrightarrow \Chb(\BLineBundles{\underline{m}})
\]
(which only has to be described on morphisms between objects in $\BLineBundles{\underline{n}}$ since its extension to the zero object
is uniquely determined by the tacitly required $k$-linearity)
extends to a functor
\[
    F: \Chb(\BLineBundles{\underline{n}}) \longrightarrow \Chb(\BLineBundles{\underline{m}})
\]
by applying $f$ to every object of a given complex and taking the total complex of the resulting bicomplex.
Since taking total complexes and cones are functorial operations on the level of complexes, $F$ commutes with shifts and cones, and thus
gives rise to an exact functor
\[
    \overline{F}: \Kb(\BLineBundles{\underline{n}}) \longrightarrow \Kb(\BLineBundles{\underline{m}})
\]
between homotopy categories.
We may summarize this procedure within the following diagram (commutative up to natural isomorphism),
where $\Chb( \BLineBundles{\underline{n}} ) \rightarrow \Kb( \BLineBundles{\underline{n}} )$ denotes the natural quotient functor:
\begin{center}
    \begin{tikzpicture}[label/.style={postaction={
    decorate,
    decoration={markings, mark=at position .5 with \node #1;}},
    mylabel/.style={thick, draw=none, align=center, minimum width=0.5cm, minimum height=0.5cm,fill=white}}]
    \coordinate (r) at (5,0);
    \coordinate (d) at (0,-2);
    \node (L) {$\Chb(  \BLineBundles{\underline{n}} )$};
    
    \node (G) at ($(L) - (d)$){$\BLineBundlesZero{\underline{n}}$};
    
    \node (A) at ($(L) + (r)$) {$\Chb( \BLineBundles{\underline{m}} )$};
    
    \node (L2) at ($(L) + (d)$) {$\Kb(  \BLineBundles{\underline{n}} )$};
    \node (A2) at ($(A) + (d)$) {$\Kb( \BLineBundles{\underline{m}} )$};
    
    \node (L3) at ($(L2) + (d)$) {$\Db( \Pro^{\underline{n}} )$};
    \node (A3) at ($(A2) + (d)$) {$\Db( \Pro^{\underline{m}} )$};
    
    \draw[->,thick] (L) --node[above]{$F$} (A);
    \draw[->,thick] (L2) --node[above]{$\overline{F}$} (A2);
    \draw[->,thick] (L3) --node[above]{$\iota \circ \overline{F} \circ \iota^{-1}$} (A3);
    
    \draw[->,thick] (L) -- (L2);
    \draw[->,thick] (A) -- (A2);
    
    \draw[->,thick] (L2) --node[left]{$\iota$} (L3);
    \draw[->,thick] (A2) --node[right]{$\iota$} (A3);
    
    \draw[left hook->,thick] (G) -- (L);
    \draw[->,thick] (G) --node[above]{$f$} (A);
    
    \end{tikzpicture}
\end{center}

We will prove that for modeling the direct image and the pullback functors, this strategy can be carried out easily.

\subsection{Modeling the pullback functor}\label{subsection:pullback}
For simplicity, we restrict to the case $\Pro^a \times \Pro^b$ for $a, b \in \N_0$.
Computing within the small $k$-category $\BLineBundlesZero{a,b}$ is determined by the Künneth formula:
\[
    \Hom_{\Pro^a \times \Pro^b}( \OS( i_1) \boxtimes \OS( j_1), \OS( i_2) \boxtimes \OS( i_2 ) ) \simeq
    \Hom_{\Pro^a}( \OS( i_1) , \OS( i_2)  ) \otimes \Hom_{\Pro^b}( \OS( j_1) , \OS( j_2)  )
\]
for $i_1, i_2, j_1, j_2 \in \Z$.
Let $p: \Pro^a \times \Pro^b \rightarrow \Pro^a$ denote the first projection.
The pullback functor $p^{\ast}: \Db( \Pro^a ) \rightarrow \Db( \Pro^a \times \Pro^b)$
maps the subcategory $\BLineBundlesZero{a}$ into $\BLineBundlesZero{a,b}$, and yields the functor
\begin{equation*}
    f: \BLineBundlesZero{a} \rightarrow \BLineBundlesZero{a,b}:
    (\OS(i) \xrightarrow{\alpha} \OS(j))
    \mapsto
    (\OS(i) \boxtimes \OS \xrightarrow{\alpha \boxtimes id_\OS} \OS(j) \boxtimes \OS).
\end{equation*}
As described in Subsection \ref{subsection:too_simple} 
(applied to the composition $\BLineBundlesZero{a} \xrightarrow{f} \BLineBundlesZero{a,b} \hookrightarrow \Chb( \BLineBundles{a,b})$) 
we obtain an induced functor
\[
    \overline{F}: \Kb(\BLineBundles{a}) \rightarrow \Kb(\BLineBundles{a,b})
\]
between the homotopy categories.
\begin{lemma}
    Suppose given a bounded complex whose objects are direct sums of vector bundles in $\BLineBundles{a}$,
    then its pullback is given by replacing all direct summands of the form $\OS(i)$ with $\OS(i) \boxtimes \OS$.
    More precisely,
    we get a commutative diagram (up to natural isomorphism) of functors
    \begin{center}
        \begin{tikzpicture}[label/.style={postaction={
        decorate,
        decoration={markings, mark=at position .5 with \node #1;}},
        mylabel/.style={thick, draw=none, align=center, minimum width=0.5cm, minimum height=0.5cm,fill=white}}]
        \coordinate (r) at (5,0);
        \coordinate (d) at (0,-2);
        \node (L) {$\Kb(  \BLineBundles{a} )$};
        \node (A) at ($(L) + (r)$) {$\Kb( \BLineBundles{a,b} )$};
        
        \node (L2) at ($(L) + (d)$) {$\Db( \Pro^a) $};
        \node (A2) at ($(A) + (d)$) {$\Db( \Pro^{a} \times \Pro^{b}) $};
        
        \draw[->,thick] (L) --node[above]{$\overline{F}$} (A);
        \draw[->,thick] (L2) --node[above]{$p^{\ast}$} (A2);
        
        \draw[->,thick] (L) --node[left]{$\iota$} (L2);
        \draw[->,thick] (A) --node[right]{$\iota$} (A2);
        
        \end{tikzpicture}
    \end{center}
\end{lemma}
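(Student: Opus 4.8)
The plan is to reduce the whole statement to a term-wise comparison at the level of bounded homotopy categories of complexes of coherent sheaves. The projection $p\colon \Pro^a\times\Pro^b\to\Pro^a$ is flat, so $p^{\ast}$ is exact on $\Coh$; consequently the underived pullback already defines an exact functor $\Kb(\Pro^a)\to\Kb(\Pro^a\times\Pro^b)$ which commutes, up to the canonical comparison isomorphism, with the two quotient functors onto the derived categories, and thus computes $p^{\ast}$ in the derived sense. Since the equivalence $\iota$ of \eqref{equation:equivalence_kb_db} is by definition the restriction of such a quotient functor, it suffices to establish the commutativity
\[
    p^{\ast}\circ\bigl(\Kb(\BLineBundles{a})\hookrightarrow\Kb(\Pro^a)\bigr)
    \;\simeq\;
    \bigl(\Kb(\BLineBundles{a,b})\hookrightarrow\Kb(\Pro^a\times\Pro^b)\bigr)\circ\overline{F}
\]
and then to paste it together with the compatibility isomorphism between $\iota$ and the localization functor.

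Next I would note that both sides of this comparison are computed term-wise on complexes. For $p^{\ast}$ this is immediate from exactness: it replaces each term $\mathcal{F}$ of a bounded complex by $p^{\ast}\mathcal{F}$ and each differential $\delta$ by $p^{\ast}\delta$. For $\overline{F}$ one uses that the functor $f$ of Subsection~\ref{subsection:pullback} takes values in $\BLineBundlesZero{a,b}$, i.e.\ in complexes concentrated in a single degree, so that the bicomplex produced in the construction of Subsection~\ref{subsection:too_simple} is a complex in one direction only and the passage to the total complex does nothing; hence $\overline{F}$, too, simply applies $f$ to each term and to each matrix entry of each differential. It therefore remains to compare the two functors $\BLineBundlesZero{a}\to\Chb(\BLineBundles{a,b})$ given by $f$ and by the restriction of $p^{\ast}$.

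On objects the agreement is just the definition of the external tensor product: since $q^{\ast}\OS\cong\OS_{\Pro^a\times\Pro^b}$ is the tensor unit, one has $p^{\ast}\OS(i)\cong p^{\ast}\OS(i)\otimes q^{\ast}\OS = \OS(i)\boxtimes\OS = f(\OS(i))$ for $i=-a,\dots,0$, and both functors send the zero object to zero by $k$-linearity; in particular $p^{\ast}$ indeed maps $\Chb(\BLineBundles{a})$ into $\Chb(\BLineBundles{a,b})$. On morphisms, for $\alpha\colon\OS(i)\to\OS(j)$ one has, under the same unitor isomorphism, $p^{\ast}\alpha = p^{\ast}\alpha\otimes q^{\ast}\id_{\OS} = \alpha\boxtimes\id_{\OS} = f(\alpha)$. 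Assembling these identifications over a complex yields a natural isomorphism between $\overline{F}$ followed by the inclusion and the restriction of $p^{\ast}$; combining it with the isomorphism from the first paragraph produces the claimed commutative square.

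The argument is essentially bookkeeping, and the one substantive input is the exactness of $p^{\ast}$: this is what lets the entire comparison take place on honest complexes, applying $p^{\ast}$ term-wise, rather than forcing a detour through locally free resolutions, and it is also what makes the ``simple approach'' of Subsection~\ref{subsection:too_simple} legitimate in this situation. Flatness of the projection $p$ is exactly the hypothesis that provides this; beyond it, the only point requiring a little care is keeping track of which unitor and localization isomorphisms are needed to assemble the final ``up to natural isomorphism'', and I expect no genuine obstacle.
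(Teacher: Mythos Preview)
Your proposal is correct and follows essentially the same approach as the paper: the key input is that $p^{\ast}$ is exact (the paper states this directly; you justify it via flatness of the projection), so the derived pullback is computed term-wise on complexes, after which the comparison with $\overline{F}$ is immediate. Your version is simply a more detailed unpacking of the paper's one-sentence proof.
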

\begin{proof}
This is true since $p^{\ast}$ seen as a functor between abelian categories of coherent sheaves is an exact functor,
and thus its derived version is given by its application to complexes.
\end{proof}

\subsection{Modeling the direct image functor}\label{subsection:direct_image}

Next, we can define a right adjoint of $f$ without any reference to algebraic geometry. It has to be given by 
\begin{equation*}
    g: \BLineBundlesZero{a,b} \rightarrow \BLineBundlesZero{a}:
    \OS(i) \boxtimes \OS(j) \mapsto
    \left\{ \begin{array}{cc}
        0, & j < 0 \\
        \OS(i), & i = 0
\end{array} \right.
\end{equation*}
From this definition, it becomes clear why we had to add the zero objects to $\BLineBundlesZero{a}$ and $\BLineBundlesZero{a,b}$,
otherwise, we could not have defined this adjoint.
Indeed, $g$ is right adjoint to $f$ since
\begin{equation*}
    \Hom_{\BLineBundlesZero{a,b}}( f( \OS(i_1)), \OS(i_2) \boxtimes \OS(j_2) ) \simeq \Hom_{\BLineBundlesZero{a,b}}( \OS(i_1) \boxtimes \OS, \OS(i_2) \boxtimes \OS(j_2) )
\end{equation*}
is zero in the case $j_2 < 0$, since $\Hom( \OS, \OS( j_2) ) = 0$, and equivalent to
\begin{equation*}
    \Hom_{\BLineBundlesZero{a}}( \OS(i_1), \OS(j_2) ) \simeq \Hom_{\BLineBundlesZero{a}}( \OS(i_1) , g(\OS(i_2) \boxtimes \OS(j_2)) )
\end{equation*}
for $j_2 = 0$.
Again, $g$ induces a functor
\[
    \overline{G}: \Kb( \BLineBundles{a,b} ) \rightarrow \Kb( \BLineBundles{a} )
\]
between homotopy categories.

\begin{lemma}
    The functor $\overline{G}$ is right adjoint to $\overline{F}$.
\end{lemma}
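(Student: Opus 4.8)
The plan is to show that $\overline{G}$ is right adjoint to $\overline{F}$ by promoting the adjunction $f \dashv g$ between the small $k$-categories $\BLineBundlesZero{a}$ and $\BLineBundlesZero{a,b}$ through the two formal operations used to build $\overline{F}$ and $\overline{G}$: first passing from the $k$-categories to the complex categories $\Chb(-)$, and second passing to the homotopy categories $\Kb(-)$. The key point is that both $\overline{F}$ and $\overline{G}$ were produced by the recipe of Subsection \ref{subsection:too_simple} — apply $f$ (resp. $g$) termwise to every object of a complex and totalize — so they are, up to the bookkeeping of signs in the differential, just ``$\Chb$'' applied to the functors $f$ and $g$. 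I would therefore first record that for any $k$-linear functor $h\colon \AC \to \Chb(\BC)$ obtained from a functor on the underlying $k$-categories, the induced functor on $\Chb$ preserves direct sums and cones, hence is compatible with the natural quotient functors to $\Kb$; this is already essentially stated in Subsection \ref{subsection:too_simple}.

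First I would construct the unit and counit. Since $g$ is right adjoint to $f$ (as verified in Subsection \ref{subsection:direct_image}), we have a unit natural transformation $\eta\colon \Id_{\BLineBundlesZero{a}} \Rightarrow g \circ f$ and a counit $\varepsilon\colon f \circ g \Rightarrow \Id_{\BLineBundlesZero{a,b}}$ satisfying the triangle identities. Explicitly, on $\OS(i)$ the composite $g(f(\OS(i))) = g(\OS(i)\boxtimes\OS) = \OS(i)$, so $\eta$ is the identity; and on $\OS(i)\boxtimes\OS(j)$ the object $f(g(\OS(i)\boxtimes\OS(j)))$ is $0$ when $j<0$ and $\OS(i)\boxtimes\OS$ when $i=0$ (and we only ever apply $f\circ g$ to objects of $\BLineBundlesZero{a,b}$), so $\varepsilon$ is the obvious projection/zero map. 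Next I would extend $\eta$ and $\varepsilon$ to natural transformations between the termwise-extended functors on $\Chb$: given a complex $X$, define $(\widetilde\eta)_X\colon X \to \overline{G}\,\overline{F}(X)$ and $(\widetilde\varepsilon)_Y\colon \overline{F}\,\overline{G}(Y) \to Y$ componentwise by $\eta$ and $\varepsilon$. Because $\eta,\varepsilon$ are natural and because forming the total complex of the relevant bicomplex is functorial, these componentwise maps assemble into chain maps, and the triangle identities, being checked termwise, are inherited from the triangle identities for $f \dashv g$. This already gives that $\overline{G}$ (viewed on $\Chb$) is right adjoint to $\overline{F}$.

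Finally I would descend to the homotopy categories. Passing $\widetilde\eta$ and $\widetilde\varepsilon$ through the quotient functors $\Chb \to \Kb$ yields natural transformations between the functors on $\Kb(\BLineBundles{a})$ and $\Kb(\BLineBundles{a,b})$, and the triangle identities persist because they already hold on the nose in $\Chb$, a fortiori up to homotopy. Hence $\overline{G} \dashv$-adjoint to $\overline{F}$ on the level of $\Kb$. The one subtlety — and the step I expect to need the most care — is checking that the componentwise $\eta$, $\varepsilon$ really are chain maps, i.e.\ that they commute with the differentials of the totalized complexes including the Koszul-type signs introduced by totalization; this is a routine but sign-sensitive verification, and it is handled cleanly by noting that $\eta$ and $\varepsilon$ are natural transformations of $k$-linear functors, so they commute with the matrix entries of the differential entrywise, and totalization multiplies a whole column of the differential by a common sign, which $\eta_X$ and $\varepsilon_Y$ respect. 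A cleaner alternative, which I would mention, is to invoke uniqueness of adjoints: $p^\ast\colon \Db(\Pro^a)\to\Db(\Pro^a\times\Pro^b)$ has right adjoint $\RDer q_\ast$, and under the equivalences $\iota$ the functor $\overline{F}$ corresponds to $p^\ast$ by the previous lemma, so $\overline{G}$ must be isomorphic to the functor corresponding to $\RDer q_\ast$; but since the whole point of this subsection is to have $g$ and hence $\overline{G}$ available \emph{without} reference to algebraic geometry, I would prefer the direct argument above and relegate the geometric remark to an aside.
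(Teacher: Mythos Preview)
Your proposal is correct and follows essentially the same route as the paper's brief proof: promote the adjunction $f \dashv g$ through the formal passage to complexes (equivalently, additive closure then complexes) and then descend to homotopy, using that the unit/counit extend termwise and the triangle identities persist. Two small slips worth fixing: in your description of $\varepsilon$, the case ``$\OS(i)\boxtimes\OS$ when $i=0$'' should read ``when $j=0$'', and in your geometric aside the right adjoint of $p^\ast$ is $\RDer p_\ast$, not $\RDer q_\ast$.
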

\begin{proof}
    The proof can be done by pure category theory.
    The adjunction between $g$ and $f$
    first gives rise to an adjunction between their additive closures,
    second to the category of complexes,
    and third passes down to homotopy since the induced functors on the level of complexes respect homotopy.
\end{proof}

\begin{corollary}
    Suppose given a bounded complex whose objects are direct sums of vector bundles in $\BLineBundles{a,b}$,
    then its direct image is given by deleting all direct summands of the form $\OS(i) \boxtimes \OS(j)$ with $j < 0$,
    and replacing the remaining bundles $\OS(i) \boxtimes \OS$ with $\OS(i)$.
    More precisely,
    we get a commutative diagram (up to natural isomorphism) of functors
    \begin{center}
        \begin{tikzpicture}[label/.style={postaction={
        decorate,
        decoration={markings, mark=at position .5 with \node #1;}},
        mylabel/.style={thick, draw=none, align=center, minimum width=0.5cm, minimum height=0.5cm,fill=white}}]
        \coordinate (r) at (5,0);
        \coordinate (d) at (0,-2);
        \node (L) {$\Kb(  \BLineBundles{a,b} )$};
        \node (A) at ($(L) + (r)$) {$\Kb( \BLineBundles{a} )$};
        
        \node (L2) at ($(L) + (d)$) {$\Db( \Pro^{a} \times \Pro^{b})$};
        \node (A2) at ($(A) + (d)$) {$\Db( \Pro^a)$};
        
        \draw[->,thick] (L) --node[above]{$\overline{G}$} (A);
        \draw[->,thick] (L2) --node[above]{$\RDer p_{\ast}$} (A2);
        
        \draw[->,thick] (L) --node[left]{$\iota$} (L2);
        \draw[->,thick] (A) --node[right]{$\iota$} (A2);
        
        \end{tikzpicture}
    \end{center}
\end{corollary}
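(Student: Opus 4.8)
The plan is to reduce the statement to the previous two lemmas together with the standard adjunction property of Fourier--Mukai functors. Recall that the direct image $\RDer p_{\ast}: \Db(\Pro^a \times \Pro^b) \rightarrow \Db(\Pro^a)$ of the first projection $p$ is right adjoint to the pullback $p^{\ast}: \Db(\Pro^a) \rightarrow \Db(\Pro^a \times \Pro^b)$; this is the derived version of the $(\,p^{\ast}, p_{\ast})$-adjunction between categories of coherent sheaves. On the combinatorial side, the functor $\overline{G}$ was shown in the preceding lemma to be right adjoint to $\overline{F}$. The preceding subsection's lemma identifies $\overline{F}$ with $p^{\ast}$ under the equivalences $\iota$, i.e.\ there is a natural isomorphism $p^{\ast} \circ \iota \simeq \iota \circ \overline{F}$. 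So we have two functors, $\iota \circ \overline{G}$ and $(\RDer p_{\ast}) \circ \iota$, each of which is right adjoint to the same functor $\iota \circ \overline{F} \simeq p^{\ast} \circ \iota$ (using that $\iota$ is an equivalence, one may transport adjunctions freely). By uniqueness of adjoints up to natural isomorphism, these two functors are naturally isomorphic, which is exactly the claimed commutativity of the square up to natural isomorphism.

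The explicit description of $\RDer p_{\ast}$ on complexes of line bundles then follows by unwinding: the equivalence $\iota$ sends a complex in $\Kb(\BLineBundles{a,b})$ to its image in $\Db(\Pro^a \times \Pro^b)$, the functor $\overline{G}$ acts on such a complex by applying $g$ objectwise and forming the total complex, and $g$ by definition deletes every summand $\OS(i) \boxtimes \OS(j)$ with $j < 0$ and sends $\OS(i) \boxtimes \OS$ to $\OS(i)$. Thus $\iota \circ \overline{G}$ performs exactly the recipe stated in the corollary, and by the natural isomorphism established above this computes $\RDer p_{\ast}$ on objects coming from $\Kb(\BLineBundles{a,b})$.

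The only point requiring a little care is that the abstract uniqueness-of-adjoint argument produces a natural isomorphism, but one should check it is compatible with the equivalences $\iota$ in the way the diagram asserts; this is routine once one observes that all four functors in the square, together with the comparison isomorphisms, are built from the same adjunction data. I expect this compatibility bookkeeping to be the main (though minor) obstacle; there is no geometric input needed beyond the $(\,p^{\ast}, \RDer p_{\ast})$-adjunction, and no sign or homotopy subtleties arise at this level since everything takes place in honest triangulated categories rather than in their dg enhancements.
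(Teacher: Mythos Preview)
Your proposal is correct and follows essentially the same approach as the paper: the paper's proof is the single sentence ``$\RDer p_{\ast}$ is characterized up to natural isomorphism as the right adjoint of $p^{\ast}$,'' which is exactly your uniqueness-of-adjoints argument spelled out in more detail. Your additional remarks on unwinding the explicit description and on the compatibility bookkeeping are reasonable elaborations but not required for the argument.
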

\begin{proof}
    $\RDer p_{\ast}$ is characterized up to natural isomorphism as the right adjoint of $p^{\ast}$.
\end{proof}

\subsection{The difficulty to model tensor products}\label{subsection:difficulty}
As a concrete example, we discuss the functor
\[
    (\OS(-1) \otimesL - ): \Db( \Pro^2 ) \rightarrow \Db( \Pro^2 ).
\]
The category $\BLineBundles{2}$ can be presented by the Beilinson quiver
\vspace{1em}
\begin{center}
    \begin{tikzpicture}[label/.style={postaction={
    decorate,
    decoration={markings, mark=at position .5 with \node #1;}},
    mylabel/.style={thick, draw=none, align=center, minimum width=0.5cm, minimum height=0.5cm,fill=white}}]
    \coordinate (r) at (4,0);
    \node (A) {$\OS(-2)$};
    \node (B) at ($(A)+(r)$) {$\OS(-1)$};
    \node (C) at ($(B) + (r)$){$\OS$};
    \draw[->,thick,label={[mylabel]{$y_0$}},transform canvas={yshift=1em}] (A) -- (B);
    \draw[->,thick,label={[mylabel]{$y_1$}},transform canvas={yshift=0em}] (A) -- (B);
    \draw[->,thick,label={[mylabel]{$y_2$}},transform canvas={yshift=-1em}] (A) -- (B);
    \draw[->,thick,label={[mylabel]{$x_0$}},transform canvas={yshift=1em}] (B) -- (C);
    \draw[->,thick,label={[mylabel]{$x_1$}},transform canvas={yshift=0em}] (B) -- (C);
    \draw[->,thick,label={[mylabel]{$x_2$}},transform canvas={yshift=-1em}] (B) -- (C);
    \end{tikzpicture}
\end{center}
with relations given by
\[
 x_i \circ y_j = x_j \circ y_i \hspace{2em} 0 \leq i,j \leq 2.
\]

In $\Db( \Pro^2 )$, we have an isomorphism $\OS( -3 ) \cong \mathcal{C}$ due to the Koszul complex,
where $\mathcal{C}$ is the complex situated in degrees $0,1,2$:
\begin{center}
    \begin{tikzpicture}[label/.style={postaction={
    decorate,
    decoration={markings, mark=at position .5 with \node #1;}},
    mylabel/.style={thick, draw=none, align=center, minimum width=0.5cm, minimum height=0.5cm,fill=white}}]
    \coordinate (r) at (4,0);
    \node (A) {$\OS(-2)^{3 \times 1}$};
    \node (B) at ($(A)+(r)$) {$\OS(-1)^{3 \times 1}$};
    \node (C) at ($(B) + (r)$){$\OS$,};
    \draw[->,thick,label={[above]{$\rho$}}] (A) -- (B);
    \draw[->,thick,label={[above]{$\delta$}}] (B) -- (C);
    \end{tikzpicture}
\end{center}
with
\[
 \rho :=
 \begin{pmatrix}
 -y_1 & y_2 & 0 \\
  y_0 & 0 & -y_2 \\
  0 & -y_0 & y_1 \\
 \end{pmatrix}
\]
and 
\[
 \delta :=
 \begin{pmatrix}
  x_0 &
  x_1 &
  x_2
 \end{pmatrix}.
\]
We will show that there is no functor
\[
    f: \BLineBundlesZero{2} \longrightarrow \Chb( \BLineBundles{2} )
\]
whose values on objects are given by
\begin{align*}
    f( \OS ) &= \OS(-1), \\
    f( \OS(-1) ) &= \OS(-2), \\
    f( \OS(-2) ) &= \mathcal{C}, \\
   \end{align*}
that fits into a commutative diagram (up to natural isomorphism)
\begin{center}
    \begin{tikzpicture}[label/.style={postaction={
    decorate,
    decoration={markings, mark=at position .5 with \node #1;}},
    mylabel/.style={thick, draw=none, align=center, minimum width=0.5cm, minimum height=0.5cm,fill=white}}]
    \coordinate (r) at (5,0);
    \coordinate (d) at (0,-2);
    \node (L) {$\BLineBundlesZero{2}$};
    \node (A) at ($(L) + (r)$) {$\Chb( \BLineBundles{2} )$};
    
    \node (L2) at ($(L) + (d)$) {$\Db( \Pro^{2} )$};
    \node (A2) at ($(A) + (d)$) {$\Db( \Pro^2)$.};
    
    \draw[->,thick] (L) --node[above]{$f$} (A);
    \draw[->,thick] (L2) --node[above]{$(\OS(-1) \otimesL - )$} (A2);
    
    \draw[->,thick] (L) -- (L2);
    \draw[->,thick] (A) -- (A2);
    
    \end{tikzpicture}
\end{center}
We refrain from studying the question whether there exists such an $f$ with different but homotopic values on objects,
since this example is for motivational purposes only and will later be seen to be rectified by the machinery of \Ainf-functors.

Assume there were such an $f$. On morphisms, it has to be given by
\[
    f( x_i ) = y_i
\]
up to non-zero scalars for $i = 0,1,2$, since $y_i = \OS(-1) \otimes x_i$.
Next, we remark that 
\[
    \Hom_{\Chb( \BLineBundles{2} )}( \mathcal{C}, \OS(-2)) \simeq \Hom_{\BLineBundlesZero{2}}( \OS(-2)^{3 \times 1}, \OS(-2)) \simeq k^{3 \times 1}.
\]
Since $f$ is faithful, the $f(y_i)$'s form a basis of $\Hom_{\Chb( \BLineBundles{2} )}( \mathcal{C}, \OS(-2))$,
which we denote by $z_i$.
But now, the elements $y_i \circ z_j \in \Hom_{\Chb( \BLineBundles{2} )}( \mathcal{C}, \OS(-1)) \simeq \Hom_{\BLineBundlesZero{2}}( \OS(-2), \OS(-1))^{3 \times 1}$ 
are linearly independent for all $i,j = 0,1,2$.
In particular, 
\[f( x_1 \circ y_0 ) \neq f( x_0 \circ y_1 ),\]
a contradiction.

\section{Fourier-Mukai transforms as \Ainf-functors between dg categories}\label{section:dg_ainf_theory}

In this section, we will present the machinery of dg enhancements and \Ainf-functors
from an abstract point of view
in order to generalize and strengthen our construction strategy of Subsection \ref{subsection:too_simple}.
From this machinery, it will follow that 
regarding the concrete situation of Section \ref{section:products}, there exist dg categories
$\Chbdg( \BLineBundles{\underline{m}} )$ and $\Chbdg(  \BLineBundles{\underline{n}} )$
whose homotopy categories are exactly
$\Kb(  \BLineBundles{\underline{m}} )$
and
$\Kb(  \BLineBundles{\underline{n}} )$,
and any \Ainf-functor $f: \BLineBundles{\underline{n}} \rightarrow \Chbdg( \BLineBundles{\underline{m}} )$
gives rise to a diagram
\begin{center}
    \begin{tikzpicture}[label/.style={postaction={
    decorate,
    decoration={markings, mark=at position .5 with \node #1;}},
    mylabel/.style={thick, draw=none, align=center, minimum width=0.5cm, minimum height=0.5cm,fill=white}}]
    \coordinate (r) at (5,0);
    \coordinate (d) at (0,-2);
    \node (L) {$\Chbdg(  \BLineBundles{\underline{n}} )$};
    
    \node (G) at ($(L) - (d)$){$\BLineBundles{\underline{n}}$};
    
    \node (A) at ($(L) + (r)$) {$\Chbdg( \BLineBundles{\underline{m}} )$};
    
    \node (L2) at ($(L) + (d)$) {$\Kb(  \BLineBundles{\underline{n}} )$};
    \node (A2) at ($(A) + (d)$) {$\Kb( \BLineBundles{\underline{m}} )$};
    
    \node (L3) at ($(L2) + (d)$) {$\Db( \Pro^{\underline{n}} )$};
    \node (A3) at ($(A2) + (d)$) {$\Db( \Pro^{\underline{m}} )$};
    
    \draw[->,thick] (L) --node[above]{$F$} (A);
    \draw[->,thick] (L2) --node[above]{$\overline{F}$} (A2);
    \draw[->,thick] (L3) --node[above]{$\iota \circ \overline{F} \circ \iota^{-1}$} (A3);
    
    \draw[draw=none] (L) --node[rotate=-90]{$\longmapsto$} node[left]{$\Hzero$} (L2);
    \draw[draw=none] (A) --node[rotate=-90]{$\longmapsto$} node[right]{$\Hzero$} (A2);
    
    \draw[->,thick] (L2) --node[left]{$\iota$} (L3);
    \draw[->,thick] (A2) --node[right]{$\iota$} (A3);
    
    \draw[left hook->,thick] (G) -- (L);
    \draw[->,thick] (G) --node[above]{$f$} (A);
    
    \end{tikzpicture}
\end{center}
whose top triangle consists of \Ainf-functors and commutes, 
and whose lower square consists of exact functors which commute up to natural isomorphism.
This machinery will later lead to a solution of the problem posed in Subsection \ref{subsection:difficulty},
see Subsection \ref{subsection:quivers} for the solution.

Now, we will discuss the theoretical notions relevant for this construction strategy.

\subsection{Enhancements of triangulated categories}

In this subsection, we follow closely the nice exposition of the material given in \cite[Section 1]{CSdg17}.
A dg category is a category enriched over complexes over $k$,
and a dg functor is a corresponding enriched functor
(see Definition \ref{definition:dg_cat} and Definition \ref{definition:dg_fun} for an expansion of these definitions).
Small dg categories and dg functors form a category $\dgCat$.
It is a closed symmetric monoidal category \cite[Section 2.3]{Kel06} with tensor product
$\AC \otimes \BC$, whose objects are pairs of objects in $\AC$ and $\BC$,
and whose morphism complexes arise from the tensor product of complexes.
The objects in the dg category of internal homomorphisms $\IHom( \AC, \BC )$
are given by the dg functors.

The \emph{opposite} of a dg category $\AC$, denoted by $\AC^{\opC}$,
has the same objects as $\AC$, morphism complexes $( A_1, A_2 )_{\AC^{\opC}} := ( A_2, A_1 )_{\AC}$,
and composition $\alpha \circ_{(\AC^{\opC})} \beta := (-1)^{\abs{\alpha}\abs{\beta}} \beta \circ_{\AC} \alpha$ for composable morphisms $\alpha, \beta$ in $\AC$.
The \emph{underlying category}, resp. \emph{homotopy category}, of $\AC$ is denoted by $Z^0(\AC)$, resp. $\Hzero(\AC)$,
it has the same objects as $\AC$,
morphisms are given by the $0$-th cocycles $Z^0( ( A_1, A_2) )$, resp. $0$-th cohomologies $\Hzero( ( A_1, A_2) )$,
for all $A_1, A_2 \in \AC$.
Any dg functor $F$ induces a functor $Z^0F$, resp. $\Hzero F$, on the underlying category, resp. homotopy category.

\begin{example}\label{example:ch_dg}
    Let $\AC$ be an additive category (over $k$).
    We can form a dg category $\Ch_{\dg}(\AC)$ whose objects are given by complexes in $\AC$.
    Its degree $g \in \Z$ morphisms are
    $( A_1, A_2 )^g_{\Ch_{\dg}(\AC)} := \prod_{i \in \Z} ( A_1^{i}, A_2^{i + g})_{\AC}$,
    the differential is defined by the formula $d(\alpha) := d_{A_2} \circ \alpha - (-1)^{g}\alpha \circ d_{A_1}$
    for an $\alpha$ of degree $g$, where the application of the differentials $d_{A_1}$, $d_{A_2}$ is performed componentwise.
    This dg category is designed in a way such that $Z^0( \Ch_{\dg}(\AC) )$ is the category of complexes $\Ch( \AC )$,
    and $\Hzero( \Ch_{\dg}(\AC) )$ is the category of complexes up to homotopy $\KCh( \AC )$.
    The full dg subcategory of $\Ch_{\dg}( \AC )$ generated by bounded complexes is denoted by $\Chbdg(\AC)$.
\end{example}

The dg category of right dg modules is defined as $\Modr\AC \coloneqq \IHom( \AC^{\opC}, \Ch_{\dg}(k\Modl) )$.
Its homotopy category is known to be a triangulated category in a natural way. Moreover, we have a dg functor
\[
    Y^{\AC}: \AC \longrightarrow \Modr\AC: A \mapsto ( -, A )_{\AC}
\]
called \emph{Yoneda functor}.

\begin{definition}\label{definition:pretriangulated}
    A dg category $\AC$ is called \emph{pretriangulated}
    if the essential image of
    \[
    \Hzero(Y^{\AC}): \Hzero(\AC) \longrightarrow \Hzero(\Modr\AC)
    \]
    is a triangulated subcategory.
\end{definition}

\begin{remark}
    Thus, being pretriangulated is a property of a dg category.
    Indeed, being closed under cones and shifts can be characterized intrinsically by
    the dg representability of appropriate dg functors.
\end{remark}

\begin{remark}\label{remark:plus_tria}
    The notion of a pretriangulated category goes back to Bondal and Kapranov \cite{BonKap90},
    but they used it in a stronger sense: instead of cones, they considered arbitrary convolutions.
    In the same paper, they used the term $+$-pretriangulated for the concept of Definition \ref{definition:pretriangulated}.
\end{remark}

\begin{definition}
    Let $\TC$ be a triangulated category.
    A dg enhancement of $\TC$ is a pretriangulated dg category $\AC$ together with an exact equivalence
    $\phi: \Hzero( \AC ) \xrightarrow{\sim} \TC$.
\end{definition}

\begin{example}\label{example:chb_enh}
    The dg category $\Chbdg( \AC )$ 
    of Example \ref{example:ch_dg}
    is a dg enhancement of $\Kb( \AC )$.
    In particular, the category $\Kb(  \BLineBundles{\underline{n}} )$ of Section \ref{section:products}
    has a dg enhancement given by $\Chbdg(  \BLineBundles{\underline{n}} )$.
\end{example}

\begin{example}{\cite[Section 5]{BLL04}}\label{example:dg_enh_db}
    Let $X$ be a smooth projective variety.
    Let $\OS_X\Modl$ be the category of $\OS_X$-modules.
    The full dg subcategory of $\Ch_{\dg}( \OS_X\Modl )$
    generated by complexes that
    \begin{itemize}
        \item are bounded below,
        \item consist only of injective objects,
        \item have bounded cohomology,
        \item have coherent cohomology objects,
    \end{itemize}
    gives rise to a dg enhancement $\Db_{\dg}(X)$ of $\Db( X )$.
    Note that in the case $X = \Pro^n$, $\Chbdg(  \BLineBundles{\underline{n}} )$ also provides a (much smaller) enhancement of
    $\Db( \Pro^n )$ by Example \ref{example:chb_enh}.
\end{example}

\subsection{The homotopy category of dg categories}

Let $\AC$, $\BC$ be additive categories.
Every dg functor $F: \Chbdg( \AC ) \rightarrow \Chbdg( \BC )$
gives rise to a functor $Z^0F: \Chb( \AC ) \rightarrow \Chb( \BC )$
compatible with homotopies, which in turn gives rise to a functor
$\Hzero F: \Kb( \AC ) \rightarrow \Kb( \BC )$.
In this sense, the notion of a dg functor appears to be even less flexible 
for the purpose of modeling functors $\Kb( \AC ) \rightarrow \Kb( \BC )$
in comparison with the approach described in Subsection \ref{subsection:too_simple}.
Luckily, dg functors can be replaced by a far more flexible notion which is better adapted to functors between homotopy categories.

\begin{definition}
    Let $\AC$, $\BC$ be dg categories.
    A dg functor $F: \AC \rightarrow \BC$
    is called a \emph{quasi-equivalence} if
    \begin{itemize}
        \item $( A_1, A_2 )_{\AC} \rightarrow ( F(A_1), F(A_2) )_{\BC}$
        are quasi-isomorphisms for all $A_1, A_2 \in \AC$,
        \item $\Hzero(F): \Hzero(\AC) \rightarrow \Hzero(\BC)$ is an equivalence of categories.
    \end{itemize}
\end{definition}

The sought-after more flexible notion is exactly provided by the idea of making quasi-equivalences invertible in $\dgCat$.
In order to achieve this goal,
one uses the theory of model structures introduced by Quillen \cite{Quillen67}.
There exists a model structure on $\dgCat$ whose weak equivalences are given by quasi-equivalences \cite{Tab05}.
Thus, the homotopy category of $\dgCat$ w.r.t.\ this model structure, denoted by
$\Hqe$, is the localization of $\dgCat$ at quasi-equivalences.

Let $\HCat$ denote the category of small categories with functors considered up to natural isomorphisms as morphisms.
There is a functor (see \cite[Section 2]{Toen07})
\[
    \Hzero: \Hqe \rightarrow \HCat: (\AC \xrightarrow{F}{\BC}) \mapsto (\Hzero(\AC) \xrightarrow{\Hzero(F)}{\Hzero(\BC)})
\]
which means that every morphism in $\Hqe$ gives rise to a functor (up to natural isomorphism) between homotopy categories.
Note that this functor extends the canonical functor $\Hzero: \dgCat \rightarrow \HCat$
and thus extends the way in which we can create morphisms in $\HCat$.
It is thus of interest to understand the homomorphism sets in $\Hqe$.

The category $\Hqe$ inherits its tensor product $\otimesL$ as a derivation from the tensor product in $\dgCat$ \cite[Section 4]{Toen07}.
Its a theorem due to Toën that $\otimesL$ gives rise to a closed monoidal structure on $\Hqe$ \cite[Theorem 1.3]{Toen07},
and we denote the internal homomorphisms by $\IHomR$.

For dg categories $\AC, \BC$, the isomorphism classes in
$\Hzero(\IHomR( \AC, \BC ))$ are in bijection with the elements in $\Hom_{\Hqe}( \AC, \BC )$ \cite{Toen07}.
A concrete description of $\IHomR( \AC, \BC )$ can be given
in the language of \Ainf-categories.
An \Ainf-category (with strict units) can be regarded as a kind of category whose composition is associative only up to 
concretely given homotopies, which in turn have to satisfy compatibilities up to even higher homotopies, and so on
(see Definition \ref{definition:a_inf_cat} for the details).
The theory of \Ainf-algebras (which can be seen as \Ainf-categories with only one object)
goes back to Stasheff \cite{Stasheff63}, see \cite{KelAinf06} for a survey.
For our purposes, we only need to know that dg categories can be regarded as a special case of \Ainf-categories
(see Construction \ref{construction:dg_to_Ainf}).
In particular, it makes sense to speak of an \Ainf-functor (which respects strict units, see Definition \ref{definition:a_inf_func})
between dg categories, and this yields a vast relaxation of the notion of a dg functor.
In fact, 
$\IHomR( \AC, \BC )$ can be seen
as the dg category of \Ainf-functors from $\AC$ to $\BC$,
meaning that the elements in $\Hom_{\Hqe}( \AC, \BC )$
may be regarded as isomorphism classes of \Ainf-functors,
and $\Hzero: \Hqe \rightarrow \HCat$ as the functor mapping an \Ainf-functor to its induced functor on homotopy categories,
see \cite[Section 4.3]{Kel06}, or more recently \cite{COSLoc18}.

\subsection{Generating pretriangulated categories}

Let $\dgCat_{\pretr}$ denote the full subcategory of $\dgCat$ generated by pretriangulated dg categories.
There is a functor
\[
    \pretr: \dgCat \rightarrow \dgCat_{\pretr}
\]
and an isomorphism
\[
    \Hom_{\dgCat}( \AC, \BC ) \simeq \Hom_{\dgCat_{\pretr}}( \pretr(\AC), \BC ) 
\]
natural in every dg category $\AC$ and every pretriangulated dg category $\BC$,
constructed in \cite[Section 4]{BonKap90} in the language of monads and under the name ``+-pretriangulated'' (Remark \ref{remark:plus_tria}),
see also \cite[Section 4.5]{Kel06}.
In other words, $\dgCat_{\pretr}$ is a reflective subcategory of $\dgCat$.
The dg category $\pretr(\AC)$ is called the \emph{pretriangulated hull} of $\AC$.
Pretriangulated hulls together with their natural embedding $\AC \rightarrow \pretr( \AC )$
can be described very explicitly \cite{BonKap90} in terms of one-sided twisted complexes (see Subsection \ref{subsubsection:pretr}).

\begin{example}
    In the case when $\AC$ is a $k$-category regarded as a dg category,
    $\pretr( \AC )$ is equivalent to $\Chbdg( \AC )$.
\end{example}

The functor $\pretr$ respects quasi-equivalences \cite[Section 4.5]{Kel06}, thus, the above
adjunction passes down to the level of homotopy
\[
    \pretr: \Hqe \rightarrow \Hqe_{\pretr}
\]
where $\Hqe_{\pretr}$ denotes the full subcategory of $\Hqe$ generated by pretriangulated dg categories
and we have an isomorphism
\begin{equation}\label{equation:pretre_adj}
    \Hom_{\Hqe}( \AC, \BC ) \simeq \Hom_{\Hqe_{\pretr}}( \pretr(\AC), \BC )
\end{equation}
natural in every dg category $\AC$ and every pretriangulated dg category $\BC$ \cite[Section 4.5]{Kel06}.
Since the adjunction on the homotopy level is induced by the one on the dg level,
the direction $\Hom_{\Hqe_{\pretr}}( \pretr(\AC), \BC ) \rightarrow \Hom_{\Hqe}( \AC, \BC )$
of the isomorphism is induced by the composition with the natural embedding $\AC \rightarrow \pretr( \AC )$.
One of the main technical goals of this paper is to describe the other direction of this isomorphism explicitly
(see Theorem \ref{theorem:lift_theorem}).

The functor $\pretr$ is extremely useful for the explicit construction of triangulated categories.
For every pretriangulated dg category $\BC$, the natural dg functor 
$\pretr( \BC ) \rightarrow \BC$ (arising from the counit of the adjunction above evaluated at $\BC$) is an isomorphism, 
and thus $\BC$ can be seen as being ``freely generated by itself''.
If $\AC \xrightarrow{F} \BC$ is a full and faithful dg functor, then the essential image of
\[
    \Hzero(\pretr( \AC )) \rightarrow \Hzero(\pretr( \BC )) \xrightarrow{\sim} \Hzero(\BC)
\]
is the triangulated subcategory of $\Hzero(\BC)$
generated by the objects in the image of $F$ (see, e.g., \cite{BonKap90}).

\begin{example}\label{example:economic_dg}
    Let $X$ be a smooth projective variety.
    Let $\mathbb{G} \subset \Obj(\Db(X))$ generate $\Db(X)$ as a triangulated category.
    Let $\GC$ be a dg category
    which is isomorphic in $\Hqe$ to
    the full dg subcategory 
    of $\Db_{\dg}(X)$ 
    spanned by $\mathbb{G}$ (or by isomorphic representatives).
    Then we get an exact equivalence
    \[
        \Hzero(\pretr (\GC)) \simeq \Db(X).
    \]
    If furthermore $\mathbb{G}$ is a full exceptional collection,
    then the dg category $\GC$ can be chosen in way such that it has finite dimensional homomorphism spaces \cite[Theorem 1.1]{Bod15}.
    A particular situation occurs if the full exceptional collection is strong. 
    In that case, $\GC$ can be chosen as the full $k$-subcategory of $\Db(X)$ generated by $\mathbb{G}$ \cite[Remark 3.1]{Bod15}.
\end{example}

\subsection{Inducing Fourier-Mukai transforms}\label{subsection:inducing_fm}
Let $X$ be a smooth projective variety over $k$.
The dg enhancement $\Dbdg(X)$ of $\Db( X )$ given in Example \ref{example:dg_enh_db}
is unique up to isomorphism in $\Hqe$.
More precisely, let $\TC$ be a triangulated category with a dg enhancement.
We say that $\TC$ has a \emph{strongly unique} dg enhancement
if for any two enhancements $(\AC, \phi: \Hzero(\AC) \xrightarrow{\sim} \TC )$
and $(\BC, \psi: \Hzero(\BC) \xrightarrow{\sim} \TC )$,
there exists an isomorphism $f \in \Hom_{\Hqe}( \AC, \BC )$
such that there is a natural isomorphism
\[
    \psi \circ \Hzero(f) \simeq \phi.
\]
By \cite[Theorem 5.14]{COSLoc18}, the category $\Db(X)$ has a strongly unique dg enhancement.
In particular, one may replace $\Db_{\dg}( X )$ and $\Db_{\dg}( Y )$ in the following theorem
with the computationally more suitable enhancements of Example \ref{example:economic_dg}.
\begin{theorem}[{\cite[Proposition 6.11]{COSLoc18}}]\label{theorem:FM_as_Ainf}
    Let $X, Y$ be smooth projective varieties over a field $k$.
    The image of the map
    \[
        \Hzero: \Hom_{\Hqe}( \Db_{\dg}( X ), \Db_{\dg}( Y ) ) \rightarrow \Hom_{\HCat}( \Db( X ), \Db( Y ) )
    \]
    consists exactly of the functors of Fourier-Mukai type.
\end{theorem}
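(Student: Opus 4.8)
The plan is to deduce the theorem from Toën's description of the internal hom in $\Hqe$ together with the structure theory of smooth and proper dg categories. Recall from the discussion preceding the theorem that $\Hom_{\Hqe}(\AC,\BC)$ is the set of isomorphism classes of objects of $\Hzero(\IHomR(\AC,\BC))$, and that by Toën's theorem $\IHomR(\AC,\BC)$ is quasi-equivalent to the dg category $\mathrm{rep}(\AC,\BC)$ of \emph{right quasi-representable} $\AC$-$\BC$-bimodules, i.e.\ those bimodules $M$ whose associated dg functor $\AC\to\Modr\BC$ takes values, up to quasi-isomorphism, in the image of the Yoneda functor $Y^{\BC}$; writing $M^{\flat}(A)\in\BC$ for the object such that $Y^{\BC}(M^{\flat}(A))$ represents the value of $M$ at $A$, the functor $\Hzero$ sends the class of $M$ to the functor $\Hzero(\AC)\to\Hzero(\BC)$, $A\mapsto M^{\flat}(A)$. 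So I must identify, for $\AC=\Dbdg(X)$ and $\BC=\Dbdg(Y)$, the functors obtained this way with the Fourier-Mukai transforms, and this is where the geometry enters.

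Since $X$ and $Y$ are smooth projective, $\Dbdg(X)$ and $\Dbdg(Y)$ are smooth and proper dg enhancements of $\Db(X)=\mathrm{Perf}(X)$ and $\Db(Y)=\mathrm{Perf}(Y)$ (by strong uniqueness of the enhancement one could even pass to the computationally smaller models of Example~\ref{example:economic_dg}, e.g.\ those generated by the Be{\u\i}linson bundles). Toën's derived Morita theory for schemes then supplies an equivalence identifying the homotopy category of $(\Dbdg(X),\Dbdg(Y))$-bimodules with $\mathrm{D}_{\mathrm{qc}}(X\times Y)$, under which the perfect bimodules correspond precisely to $\mathrm{Perf}(X\times Y)=\Db(X\times Y)$; moreover, for the bimodule $M_K$ attached to a kernel $K\in\Db(X\times Y)$, its action on objects $A\mapsto M_K^{\flat}(A)$ is identified with the Fourier-Mukai functor $\phi_K=\RDer q_\ast(K\otimesL p^\ast(-))$ restricted to $\Db(X)$.

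It then remains to check that \emph{every} $K\in\Db(X\times Y)$ yields a bimodule belonging to $\mathrm{rep}(\Dbdg(X),\Dbdg(Y))$, i.e.\ that $M_K$ is always right quasi-representable; this is exactly where smoothness and properness are used. For $E\in\Db(X)=\mathrm{Perf}(X)$ the complex $p^\ast E$ is perfect, hence so is $K\otimesL p^\ast E\in\mathrm{Perf}(X\times Y)=\Db(X\times Y)$, and $\RDer q_\ast$ carries it into $\Db(Y)$ by properness; thus $\phi_K(E)$ is a genuine object of $\Hzero(\Dbdg(Y))$ representing the value of $M_K$ at $E$. Conversely, every perfect right quasi-representable bimodule arises from some such $K$. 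Combining the two previous steps, the image of $\Hzero$ consists precisely of the functors naturally isomorphic to some $\phi_K$ with $K\in\Db(X\times Y)$, i.e.\ of the functors of Fourier-Mukai type, as claimed.

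The only place where real work is needed, and where the hypotheses are essential, is the derived Morita identification used above: that the dg category of bimodules between enhancements of $\mathrm{Perf}(X)$ and $\mathrm{Perf}(Y)$ models $\mathrm{D}_{\mathrm{qc}}(X\times Y)$, that the perfect bimodules model $\Db(X\times Y)$, and that $M_K$ acts on objects as $\phi_K$. This rests on (a) the fact that $\mathrm{D}_{\mathrm{qc}}(X)$ is the derived category of the dg category $\mathrm{Perf}_{\mathrm{dg}}(X)$ --- a compact-generation argument following Bondal--Van den Bergh and Neeman, using that $X$ admits a classical generator --- and (b) a Künneth/base-change equivalence $\mathrm{D}(\mathrm{Perf}_{\mathrm{dg}}(X)^{\opC}\otimesL\mathrm{Perf}_{\mathrm{dg}}(Y))\simeq\mathrm{D}_{\mathrm{qc}}(X\times Y)$ restricting to perfect objects on both sides, together with a careful bookkeeping that turns the tautological bimodule action into the integral-transform formula (keeping track of the standard duality twist relating a bimodule to its integral kernel). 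Everything else --- the reduction through $\mathrm{rep}(-,-)$, the identification of $\Hom_{\Hqe}$ with isomorphism classes in $\Hzero(\IHomR(-,-))$, and the description of $\Hzero$ on such classes as evaluation of \Ainf-functors on objects --- has already been recorded in the text and is formal.
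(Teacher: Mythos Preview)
The paper does not give its own proof of this theorem: it is simply quoted from \cite[Proposition~6.11]{COSLoc18}, and the surrounding text only uses it as a black box. So there is no ``paper's proof'' to compare against; what you have written is an independent sketch of the standard To\"en--style argument that underlies the cited result.

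Your outline is essentially the right one (To\"en's identification of $\Hom_{\Hqe}$ with isomorphism classes of right quasi-representable bimodules, the K\"unneth identification of perfect $(\Dbdg(X),\Dbdg(Y))$-bimodules with $\Db(X\times Y)$, and the bookkeeping that turns the bimodule action into $\phi_K$). One point deserves to be stated more carefully. You verify that every $K\in\Db(X\times Y)$ gives a right quasi-representable bimodule, so every Fourier--Mukai functor lies in the image. For the reverse inclusion you write ``conversely, every perfect right quasi-representable bimodule arises from some such $K$'', but that is not quite the converse you need: an arbitrary class in $\Hom_{\Hqe}(\Dbdg(X),\Dbdg(Y))$ corresponds to a right quasi-representable bimodule which is \emph{a priori} not known to be perfect. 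The missing step is that smoothness of $\Dbdg(X)$ (as a dg category) forces every right quasi-representable bimodule to be perfect over $\Dbdg(X)^{\opC}\otimesL\Dbdg(Y)$; equivalently, for saturated dg categories one has $\mathrm{rep}(\AC,\BC)\simeq\mathrm{perf}(\AC^{\opC}\otimesL\BC)$. You allude to smoothness and properness being essential, but you invoke them only in the direction $K\mapsto M_K$; make the other direction explicit and the argument is complete.
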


Assume that $\Db(X)$ and $\Db(Y)$ have dg enhancements $\pretr( \GC_X )$ and $\pretr( \GC_Y )$
for dg categories $\GC_X$, $\GC_Y$.
Then Theorem \ref{theorem:FM_as_Ainf} implies that any 
Fourier-Mukai transform arises from an \Ainf-functor
$G: \pretr(\GC_X ) \rightarrow \pretr(\GC_Y)$
by passing to homotopy categories.
If we restrict $G$ to $\GC_X$ and denote this restriction by
\[
    F: \GC_X \rightarrow \pretr(\GC_Y ),
\]
then we may reconstruct $G$ by applying the adjunction \eqref{equation:pretre_adj}, i.e.,
$G = F^{\sharp}$ in $\Hqe$ with $F^{\sharp}$ the \Ainf-functor induced by the universal property of pretriangulated hulls.
We summarize this procedure within the following diagram,
where the resulting functor $\phi$ is a Fourier-Mukai transform:
\begin{center}
    \begin{tikzpicture}[label/.style={postaction={
    decorate,
    decoration={markings, mark=at position .5 with \node #1;}},
    mylabel/.style={thick, draw=none, align=center, minimum width=0.5cm, minimum height=0.5cm,fill=white}}]
    \coordinate (r) at (5,0);
    \coordinate (d) at (0,-2);
    \node (L) {$\pretr(\GC_{X})$};
    
    \node (G) at ($(L) - (d)$){$\GC_{X}$};
    
    \node (A) at ($(L) + (r)$) {$\pretr(\GC_{Y})$};
    
    \node (L2) at ($(L) + (d)$) {$\Hzero(\pretr(\GC_{X}))$};
    \node (A2) at ($(A) + (d)$) {$\Hzero(\pretr(\GC_{Y}))$};
    
    \node (L3) at ($(L2) + (d)$) {$\Db( X )$};
    \node (A3) at ($(A2) + (d)$) {$\Db( Y )$};
    
    \draw[->,thick] (L) --node[above]{$F^{\sharp}$} (A);
    \draw[->,thick] (L2) --node[above]{$\Hzero({F^{\sharp}})$} (A2);
    \draw[->,thick] (L3) --node[above]{$\phi$} (A3);
    
    \draw[draw=none] (L) --node[rotate=-90]{$\longmapsto$} node[left]{$\Hzero$} (L2);
    \draw[draw=none] (A) --node[rotate=-90]{$\longmapsto$} node[right]{$\Hzero$} (A2);
    
    \draw[->,thick] (L2) --node[above,rotate=90]{$\sim$} (L3);
    \draw[->,thick] (A2) --node[above,rotate=-90]{$\sim$} (A3);
    
    \draw[left hook->,thick] (G) -- (L);
    \draw[->,thick] (G) --node[above]{$F$} (A);
    
    \end{tikzpicture}
\end{center}
In the next Section \ref{section:constructive}, 
we answer the following question:
suppose given an \Ainf-functor $F: \GC_X \rightarrow \pretr(\GC_Y)$, how do we compute explicitly the corresponding lift $F^{\sharp}$?
In Subsection \ref{subsection:fm_for_proj}, we will resume our modeling of Fourier-Mukai transforms for projective spaces,
building on the ideas presented in this section.

\section{A constructive approach to \Ainf-functors between dg categories}\label{section:constructive}

This section is the technical heart of the paper. It can be seen as an
implementation strategy for \Ainf-functors between dg categories in a software project like \CapPkg
\cite{CAP-project, GP_Rundbrief}.

\subsection{Formal constructions of dg categories}

\subsubsection{Preliminaries}

We recall the expanded definitions of dg categories and dg functors.

\begin{definition}\label{definition:dg_cat}
    A \emph{dg category} $\AC$ consists of the following data:
    \begin{enumerate}
        \item A collection of objects, denoted by $\Obj_{\AC}$.
        \item For all objects $A_1, A_2$, a graded space $(A_1, A_2)$, called \emph{morphisms from $A_1$ to $A_2$}.
        \item For all objects $A_1, A_2$, a graded map \[d: (A_1, A_2) \longrightarrow (A_1, A_2)\] of degree $1$, called \emph{differential}.
        \item For all objects $A_1, A_2, A_3$, a graded map \[m_2: (A_2, A_3) \otimes (A_1, A_2) \longrightarrow (A_1, A_3)\] of degree $0$.
              We also use infix notation $\alpha \circ \beta := m_2( \alpha, \beta )$ and call this operation \emph{(post)composition}.
        \item The following equations hold ($\alpha, \beta, \gamma$ are composable morphisms):
        \begin{itemize}
            \item (differential equation): $d \circ d = 0$,
            \item (graded Leibniz rule): $d( \alpha \circ \beta ) = d( \alpha ) \circ \beta + (-1)^{\abs{\alpha}}\alpha \circ d(\beta)$,
            \item (associativity): $\alpha \circ (\beta \circ \gamma ) = (\alpha \circ \beta) \circ \gamma$.
        \end{itemize}
        \item For all objects $A$, a morphism $\id_A \in (A,A)$ of degree $0$, called \emph{identity}.
        \item For all objects $A_1, A_2$ and $\alpha \in (A_1, A_2)$, we have
        \begin{itemize}
            \item $\alpha \circ \id_{A_1} = \alpha$,
            \item $\id_{A_2} \circ \alpha = \alpha$,
            \item $d( \id_{A_1} ) = 0$.
        \end{itemize}
    \end{enumerate}
    \end{definition}

\begin{definition}
    A morphism $\alpha$ in a dg category $\AC$ is called \emph{closed}
    if $d( \alpha ) = 0$.
\end{definition}
    
\begin{construction}\label{construction:k_linear_to_dg}
    A $k$-category $\AC$ gives rise to a dg category $\AC_{\dg}$ with the same objects as $\AC$, and
    the homomorphism spaces can be considered as graded spaces concentrated in degree $0$,
    the differential is $d := 0$.
    Sometimes, we tacitly use this conversion and omit the subscript $(-)_{\dg}$.
\end{construction}

\begin{definition}\label{definition:dg_fun}
    Given dg categories $\AC$, $\BC$,
    a \emph{dg functor} $F: \AC \rightarrow \BC$ consists of the following data:
    \begin{enumerate}
     \item A map $F: \Obj_{\AC} \rightarrow \Obj_{\BC}$.
     \item For $A_1, A_2 \in \AC$, a morphism of graded spaces
           \[
            F_{A_1,A_2}: ( A_1, A_2 ) \longrightarrow ( FA_1, FA_2 ),
           \]
           which is also simply denoted by $F$,
           and which is compatible with the differential, composition, and identities.
    \end{enumerate}
   \end{definition}

\subsubsection{Completion by direct sums}\label{subsubsection:direct_sums}

In \cite[Tag 09L4]{stacks-project},
a differential graded direct sum for two objects in
a dg category is introduced.
We are going to call such objects simply direct sums.

\begin{definition}
 Let $\AC$ be a dg category.
 Given finitely many objects $A_1, \dots, A_n \in \AC$ for $n \in \N_0$,
 a \emph{direct sum} consists of the following data:
 \begin{enumerate}
  \item An object $\bigoplus_{i=1}^n A_i$ in $\AC$.
  \item \emph{Closed degree $0$ morphisms} $\iota_i: A_i \rightarrow \bigoplus_{i=1}^n A_i$ and $\pi_i: \bigoplus_{i=1}^n A_i \rightarrow A_i$ for $i = 1, \dots, n$.
  \item The identities
        \[
         \pi_i \circ \iota_i = \id_{A_i} \hspace{3em} \text{and} \hspace{3em} \sum_{i=1}^n \iota_i \circ \pi_i = \id_{(\bigoplus_{i=1}^n A_i)}
        \]
        hold for all $i$.
 \end{enumerate}
\end{definition}

We say $\AC$ \emph{has direct sums} if it comes equipped with a function 
mapping a list of finitely many objects $A_1, \dots, A_n \in \AC$ to the data defining a direct sum of such a list.

\begin{remark}[Column convention matrix calculus]
    Given a degree $g \in \Z$ morphism
    \[
    \bigoplus_{i=1}^n A_i \stackrel{\alpha}{\longrightarrow} \bigoplus_{j=1}^m B_j,
    \]
    we have that
    \[
     \alpha_{ji} := \pi_j \circ \alpha \circ \iota_i: A_i \rightarrow B_j
    \]
    is a degree $g$ morphism.
    Furthermore, the identity
    \[
     \sum_{i,j} \iota_j \circ \alpha_{ji} \circ \pi_i = \alpha
    \]
    holds.
    Thus, such degree $g$ morphisms $\alpha$ are in one-to-one correspondence
    to matrices $(\alpha_{ji})_{ji}$ whose entries are degree $g$ morphisms.
\end{remark}

\begin{construction}
 Let $\AC$ be a dg category.
 We construct its \emph{completion by direct sums} $\AC^{\oplus}$
 as a dg category:
 \begin{enumerate}
  \item Objects are finite families of objects $A_1, \dots, A_n \in \AC$, $n \in \N_0$.
        We denote such families by $\bigoplus_{i=1}^n A_i$. Note that the empty family is explicitly allowed.
  \item Degree $g \in \Z$ morphisms from $\bigoplus_{i=1}^n A_i$ to $\bigoplus_{j=1}^m B_j$
        is the abelian group of $m\times n$ matrices $(\alpha_{ji})_{ji}$
        with $\alpha_{ji} \in ( A_i, B_j )^g$.
        We set
        \[
         d( (\alpha_{ji})_{ji} ) := (d\alpha_{ji})_{ji}.
        \]
  \item Composition is given by matrix multiplication:
        \[
         ( \beta_{kj} )_{kj} \circ (\alpha_{ji})_{ji} := ( \sum_{j}  \beta_{kj} \circ \alpha_{ji} )_{ki}.
        \]
  \item Identities are given by diagonal matrices:
        \[
         \id_{\bigoplus_{i=1}^n A_i} = \Diag\left( (\id_{A_i})_{i = 1, \dots, n} \right).
        \]
 \end{enumerate}
\end{construction}
\begin{proof}[Correctness of the construction]
 Composition is associative and the identities act as units since this is true for the matrix calculus.
 Last, we check the Leibniz rule:
 \begin{align*}
  d\left( ( \beta_{kj} )_{kj} \circ (\alpha_{ji})_{ji} \right) &= d\left( (\sum_j \beta_{kj} \circ \alpha_{ji})_{ki} \right) \\
  &=  \left(\sum_j d(\beta_{kj} \circ \alpha_{ji}) \right)_{ki}  \\
  &=  \left(\sum_j d(\beta_{kj}) \circ \alpha_{ji} + (-1)^{\abs{\beta}}\beta_{kj} \circ d(\alpha_{ji}) \right)_{ki} \\
  &=d( ( \beta_{kj} )_{kj} ) \circ (\alpha_{ji})_{ji} + (-1)^{\abs{\beta}}  ( \beta_{kj} )_{kj} \circ d((\alpha_{ji})_{ji}). \qedhere
 \end{align*}
\end{proof}

\begin{remark}
 It is easy to see that the completion by direct sums $\AC^{\oplus}$ actually has direct sums.
 Again, this is due to block matrix arithmetics.
\end{remark}

\begin{example}
    If $\AC$ is a $k$-category, then $\Hzero( \AC_{\dg}^{\oplus} )$
    is the completion by direct sums of $\AC$ as a $k$-category.
    In this sense, the concept of direct sums is lifted to the dg level.
\end{example}

\subsubsection{Completion by translations}\label{subsubsection:translations}

\begin{definition}
    Let $\AC$ be a dg category.
    Let $A \in \AC$ and $i \in \Z$.
    An \emph{$i$-th translation} of $\AC$ consists of the following data:
    \begin{enumerate}
     \item An object $A[i]$ in $\AC$.
     \item A closed isomorphism $A[i] \rightarrow A$ of degree $i$.
    \end{enumerate}
\end{definition}

We say $\AC$ \emph{has translations} if it comes equipped with a function 
mapping a pair $(A,i)$ to the data defining an $i$-th translation of $A$.

\begin{remark}
    In other words, the dg module $(-,A)$ shifted by $i$ is represented by $A[i]$.
\end{remark}

\begin{notation}\label{notation:translations}
    Let $\AC$ be a dg category with translations, $A, B$ be objects,
    $i,j \in \Z$.
    We use the notation
    \[
        A[i] \xlongrightarrow{ \transright{i} } A
    \]
    in order to denote the defining isomorphism of the $i$-th translation of $(A,i)$.
    Furthermore, we denote its inverse by
    \[
        A \xlongrightarrow{ \transleft{i} } A[i].
    \]
    Thanks to this notation, we may conveniently denote a morphism between
    two translations for $\alpha: A \rightarrow B$ in $\AC$ and $i,j \in \Z$ by
    \[
        A[i] \xrightarrow{ \trans{j}{\alpha}{i}  } B[j] := \transleft{j} \circ \alpha \circ \transright{i}.
    \]
\end{notation}

\begin{lemma}\label{lemma:formulas_translations}
    Let $\AC$ be a dg category with translations.
    Then, for morphisms $\alpha$, $\beta$ composable in $\AC$ and $i,j,l \in \Z$, the following equations hold:
    \begin{itemize}
        \setlength\itemsep{0.4em}
        \item $\abs{\trans{j}{\alpha}{i}} = \abs{\alpha} + i - j$,
        \item $d( {\trans{j}{\alpha}{i}} ) = (-1)^j {\trans{j}{d\alpha}{i}}$,
        \item ${\trans{l}{\beta}{j}} \circ {\trans{j}{\alpha}{i}} = {\trans{l}{(\beta \circ \alpha)}{i}}$,
        \item $\id_{A[i]} = {\trans{i}{(\id_A)}{i}}$.
    \end{itemize}
\end{lemma}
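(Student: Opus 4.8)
The plan is to verify each of the four identities by unwinding the definition $\trans{j}{\alpha}{i} := \transleft{j} \circ \alpha \circ \transright{i}$ from Notation \ref{notation:translations} and then invoking the dg category axioms of Definition \ref{definition:dg_cat} — additivity of degrees under the composition $m_2$, the graded Leibniz rule, associativity, and unitality — together with one auxiliary observation: the morphism $\transleft{j}$ is closed. I would present the computations roughly in order of increasing subtlety.

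Two of the identities are immediate. Since $m_2$ has degree $0$, degrees add along a composite, so $\abs{\trans{j}{\alpha}{i}} = \abs{\transleft{j}} + \abs{\alpha} + \abs{\transright{i}}$; by the definition of the $i$-th translation $\transright{i}$ has degree $i$, and since $\transleft{j}$ is its inverse for a degree-$0$ composition, $\transleft{j}$ has degree $-j$, which gives $\abs{\alpha} + i - j$. For the identity law,
\[
    \trans{i}{(\id_A)}{i} = \transleft{i} \circ \id_A \circ \transright{i} = \transleft{i} \circ \transright{i} = \id_{A[i]},
\]
the last step being precisely the defining property of $\transleft{i}$ as inverse of $\transright{i}$. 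Likewise the composition law follows by expanding both sides, using associativity and the fact that $\transright{j} \circ \transleft{j}$ is an identity morphism:
\[
    \trans{l}{\beta}{j} \circ \trans{j}{\alpha}{i} = \transleft{l} \circ \beta \circ \transright{j} \circ \transleft{j} \circ \alpha \circ \transright{i} = \transleft{l} \circ (\beta \circ \alpha) \circ \transright{i} = \trans{l}{(\beta \circ \alpha)}{i}.
\]

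The only step that is not purely formal is the claim that $\transleft{j}$ is closed, and I would justify it first. Applying the graded Leibniz rule to $\transleft{j} \circ \transright{j} = \id_{A[j]}$ and using $d(\id_{A[j]}) = 0$ together with $d(\transright{j}) = 0$ (closedness of $\transright{j}$ is part of the definition of a translation) yields $d(\transleft{j}) \circ \transright{j} = 0$; as $\transright{j}$ is invertible, $d(\transleft{j}) = 0$. With this in hand, the differential formula is two applications of the graded Leibniz rule:
\[
    d(\trans{j}{\alpha}{i}) = (-1)^{-j}\, \transleft{j} \circ d(\alpha) \circ \transright{i} = (-1)^{j}\, \trans{j}{(d\alpha)}{i},
\]
where the first equality uses $d(\transleft{j}) = 0 = d(\transright{i})$ and $\abs{\transleft{j}} = -j$, and the second uses $(-1)^{-j} = (-1)^{j}$. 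I do not anticipate a genuine obstacle here; the only thing to watch is the sign bookkeeping, in particular that the inverse of the degree-$j$ morphism $\transright{j}$ lives in degree $-j$, which is exactly the source of the sign $(-1)^{j}$ in the second identity.
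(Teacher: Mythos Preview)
Your proof is correct and follows essentially the same approach as the paper's: the paper likewise notes that degree, identity, and composition ``follow directly from the definitions'' and that the differential formula is an application of the Leibniz rule to the three-fold composite, using that the outer morphisms are closed. The only difference is that you spell out the auxiliary step $d(\transleft{j}) = 0$ explicitly, whereas the paper merely appeals to ``the fact that the differential of a closed isomorphism is zero''; your justification via $d(\transleft{j}) \circ \transright{j} = 0$ and invertibility of $\transright{j}$ is the expected one.
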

\begin{proof}
    The formulas for the degree, identity, and composition follow directly from the definitions.
    For the formula of the differential, we simply apply the Leibniz rule for $3$ terms ($d$ passing the morphism $\transleft{j}$ yields the sign $(-1)^j$)
    and use the fact that the differential of a closed isomorphism is zero.
\end{proof}
   
\begin{construction}
Let $\AC$ be a dg category.
We construct its \emph{completion by translations} $\AC^{[\bullet]}$
as a dg category:
\begin{enumerate}
    \item Objects are pairs $(A,i)$ consisting of an object $A \in \AC$ and an integer $i \in \Z$.
        We denote such a pair by $A[i]$.
    \item Morphisms from $A[i]$ to $B[j]$
        are given by morphisms $\alpha: A \rightarrow B$ in $\AC$.
        We denote such a morphism by ${\trans{j}{\alpha}{i}}: A[i] \rightarrow B[j]$.
    \item Degrees, identities, composition, and the differential are defined by the formulas
          stated in Lemma \ref{lemma:formulas_translations}.
\end{enumerate}
\end{construction}
\begin{proof}[Correctness of the construction]
Composition is associative and the identities act as units since this is true in the underlying dg category $\AC$.
We need to check the Leibniz rule:
\begin{align*}
    d\left( {\trans{l}{\beta}{j}} \circ {\trans{j}{\alpha}{i}} \right) &= d\left( {\trans{l}{\beta \circ \alpha}{i}} \right) \\
    &= (-1)^l  \trans{l}{d(\beta \circ \alpha)}{i}  \\
    &=  (-1)^l  \trans{l}{ \left(d(\beta) \circ \alpha + (-1)^{\abs{\beta}} \beta \circ d\alpha\right)  }{i} \\
    &=  (-1)^l  \trans{l}{ \left(d(\beta) \circ \alpha\right) }{i} + (-1)^{l + \abs{\beta}} \trans{l}{ \left(\beta \circ d\alpha\right)  }{i} \\
    &=  d(\trans{l}{ \beta }{j}) \circ \trans{j}{\alpha }{i} + (-1)^{\abs{ \trans{l}{\beta}{j} }} \trans{l}{ \beta }{j} \circ d(\trans{j}{\alpha}{i}). \qedhere
\end{align*}
\end{proof}
   
\begin{remark}
The completion by translations $\AC^{[\bullet]}$ actually has translations:
let $A[i] \in \AC^{[\bullet]}$ and $j \in \Z$, then we can set $A[i][j] := A[i+j]$
and $\trans{i}{(\id_A)}{i+j}: A[ i + j ] \rightarrow A[ i ]$ is a closed isomorphism of degree $j$.
Moreover, the notation by angular brackets introduced in Notation \ref{notation:translations}
applied to $\AC^{[\bullet]}$ as a dg category with translations
yields the same result as its defining symbolic notation with angular brackets, thus, there is no need for a formal distinction.
\end{remark}

\begin{example}
    If $\AC$ is a $k$-category, then $\Hzero( (\AC_{\dg}^{[\bullet]})^{\oplus} )$
    is equivalent to the category of complexes with zero differentials
    whose objects are formal direct sums of objects in $\AC$.
\end{example}

\subsubsection{Completion by twisted complexes}\label{subsubsection:twisted_complexes}
\begin{definition}
    Let $\AC$ be a dg category.
    A \emph{twisted complex} of an object $A \in \AC$ and
    an endomorphism $q: A \rightarrow A$ with $\abs{ q } = 1$
    satisfying the Maurer Cartan equation
    \[
        d(q) + q ^2 = 0
    \]
    consists of the following data:
    \begin{enumerate}
        \item An object $\twistcobj{A}{q}$ in $\AC$.
        \item An isomorphism $\twistcobj{A}{q} \xrightarrow{\iota} A$ of degree $0$ (not necessarily closed).
        \item The following equation holds: \[d(\iota) + q \circ \iota = 0.\]
    \end{enumerate}
\end{definition}

We say $\AC$ \emph{has twisted complexes} if it comes equipped with a function 
mapping a pair $(A,q)$ to the data defining a twisted complex of such a pair.

\begin{remark}
    In other words, $\twistcobj{A}{q}$ represents the dg module 
    $(-,A)$ with twisted differential given by $\alpha \mapsto q \circ \alpha + d(\alpha)$
    for all $B \in \AC, \alpha \in (B,A)$.
\end{remark}

\begin{example}
    Let $V$ be a graded space considered as an object in the dg category of complexes $\Chbdg( k )$. Let $q: V \rightarrow V$
    be a graded map of degree $1$ such that $q^2 = 0$.
    The twisted complex of the pair $(V,q)$ in $\Chbdg( k )$ is given by the complex
    \[
        \dots \rightarrow V^i \xrightarrow{q^i} V^{i+1} \rightarrow \dots
    \]
    together with the not necessarily closed map
    \begin{center}
        \begin{tikzpicture}[label/.style={postaction={
        decorate,
        decoration={markings, mark=at position .5 with \node #1;}},
        mylabel/.style={thick, draw=none, align=center, minimum width=0.5cm, minimum height=0.5cm,fill=white}}]
        \coordinate (r) at (3,0);
        \coordinate (d) at (0,-2);
        \node (L) {$\dots$};
        \node (A) at ($(L) + (r)$) {$V^i$};
        \node (B) at ($(A)+(r)$) {$V^{i+1}$};
        \node (C) at ($(B) + (r)$){$\dots$,};
        
        \node (L2) at ($(L) + (d)$) {$\dots$};
        \node (A2) at ($(A) + (d)$) {$V^i$};
        \node (B2) at ($(B)+(d)$) {$V^{i+1}$};
        \node (C2) at ($(C) + (d)$){$\dots$};
        \draw[->,thick] (L) -- (A);
        \draw[->,thick,label={[above]{$q^i$}}] (A) -- (B);
        \draw[->,thick,label={[above]{$0$}}] (A2) -- (B2);
        \draw[->,thick] (B) -- (C);
        \draw[->,thick] (B2) -- (C2);
        \draw[->,thick] (L2) -- (A2);
        
        \draw[->,thick,label={[right]{$\id$}}] (A) -- (A2);
        \draw[->,thick,label={[right]{$\id$}}] (B) -- (B2);
        \end{tikzpicture}
    \end{center}
    In other words, the formal concept of twisted complexes allows us to pass from graded spaces to complexes.
\end{example}

\begin{notation}\label{notation:complexes}
    Let $\AC$ be a dg category with twisted complexes, $A, B$ be objects,
    $A \xrightarrow{p} A$, 
    $B \xrightarrow{q} B$
    be endomorphisms of degree $1$ satisfying the Maurer Cartan equation.
    We use the notation
    \[
        \twistcobj{A}{p} \xrightarrow{ \twistcright{p}} A
    \]
    in order to denote the defining isomorphism of the twisted complex of $(A,p)$.
    Furthermore, we denote its inverse by
    \[
        A \xrightarrow{ \twistcleft{p} } \twistcobj{A}{p}.
    \]
    Thanks to this notation, we may conveniently denote a morphism between
    two twisted complexes by
    \[
        \twistcobj{A}{p} \xrightarrow{ \twistc{q}{\alpha}{p}  } \twistcobj{B}{q} := \twistcleft{q} \circ \alpha \circ \twistcright{p}
    \]
    where $\alpha: A \rightarrow B$ is a morphism in $\AC$.
\end{notation}

\begin{lemma}\label{lemma:formulas_twisted_complexes}
    Let $\AC$ be a dg category with twisted complexes.
    Then, the following equations hold:
    \begin{enumerate}
        \setlength\itemsep{0.4em}
        \item $\twistc{p}{(\id_A)}{p} = \id_{(\twistcobj{A}{p})}$
        \item $\abs{ \twistc{q}{\alpha}{p} } = \abs{ \alpha }$
        \item $\twistc{r}{\beta}{q} \circ \twistc{q}{\alpha}{p} = \twistc{r}{(\beta \circ \alpha)}{p}$
        \item $d( \twistc{q}{\alpha}{p} ) = \twistc{q}{(q \circ \alpha)}{p} + \twistc{q}{d(\alpha)}{p}  + (-1)^{\abs{ \alpha } + 1}\twistc{q}{(\alpha \circ p)}{p} $
    \end{enumerate}
    for all composable morphisms $\alpha, \beta$ and endomorphisms $p,q,r$ of degree $1$ satisfying the Maurer Cartan equation.
\end{lemma}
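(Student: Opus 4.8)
The plan is to verify the four formulas directly from the defining data of twisted complexes, exactly in parallel with the proof of Lemma \ref{lemma:formulas_translations}. Recall that by Notation \ref{notation:complexes} a morphism $\twistc{q}{\alpha}{p}$ between twisted complexes is by definition the composite $\twistcleft{q} \circ \alpha \circ \twistcright{p}$ in $\AC$, where $\twistcright{p}: \twistcobj{A}{p} \to A$ is the defining (degree $0$, generally non-closed) isomorphism and $\twistcleft{p}$ its inverse. The two key relations we will use repeatedly are the defining equation $d(\twistcright{p}) + p \circ \twistcright{p} = 0$, together with its consequence for the inverse: applying the graded Leibniz rule to $\twistcleft{p} \circ \twistcright{p} = \id_A$ (both factors of degree $0$) and using $d(\id_A) = 0$ gives $d(\twistcleft{p}) = -\twistcleft{p} \circ d(\twistcright{p}) \circ \twistcleft{p} = \twistcleft{p} \circ p$.

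First, (1) is immediate: $\twistc{p}{(\id_A)}{p} = \twistcleft{p} \circ \id_A \circ \twistcright{p} = \twistcleft{p} \circ \twistcright{p} = \id_{(\twistcobj{A}{p})}$. Second, (2) follows since $\twistcleft{q}$ and $\twistcright{p}$ have degree $0$, so $\abs{\twistc{q}{\alpha}{p}} = \abs{\alpha}$. Third, (3) is the telescoping cancellation $\twistc{r}{\beta}{q} \circ \twistc{q}{\alpha}{p} = \twistcleft{r} \circ \beta \circ \twistcright{q} \circ \twistcleft{q} \circ \alpha \circ \twistcright{p} = \twistcleft{r} \circ \beta \circ \alpha \circ \twistcright{p} = \twistc{r}{(\beta\circ\alpha)}{p}$, using that $\twistcright{q}$ and $\twistcleft{q}$ are mutually inverse. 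These three are purely formal, just as in Lemma \ref{lemma:formulas_translations}.

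The only computation with any content is (4). I would expand
\[
    d\left( \twistc{q}{\alpha}{p} \right) = d\left( \twistcleft{q} \circ \alpha \circ \twistcright{p} \right)
\]
by the graded Leibniz rule for three factors, tracking signs via the degrees $\abs{\twistcleft{q}} = 0$ and $\abs{\alpha}$. This yields three terms: $d(\twistcleft{q}) \circ \alpha \circ \twistcright{p}$, then $\twistcleft{q} \circ d(\alpha) \circ \twistcright{p}$ (with sign $(-1)^{0} = 1$), and then $(-1)^{0+\abs{\alpha}} \twistcleft{q} \circ \alpha \circ d(\twistcright{p})$. Substituting $d(\twistcleft{q}) = \twistcleft{q} \circ q$ into the first term gives $\twistcleft{q} \circ q \circ \alpha \circ \twistcright{p} = \twistc{q}{(q\circ\alpha)}{p}$; the second term is $\twistc{q}{d(\alpha)}{p}$ by definition; and substituting $d(\twistcright{p}) = -p \circ \twistcright{p}$ into the third gives $(-1)^{\abs{\alpha}} \twistcleft{q} \circ \alpha \circ (-p) \circ \twistcright{p} = (-1)^{\abs{\alpha}+1} \twistc{q}{(\alpha\circ p)}{p}$. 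Adding the three reproduces the claimed formula. I expect the main (indeed only) obstacle to be bookkeeping of the Fukaya sign convention and the degree shift $\abs{\alpha}$ in the Leibniz expansion; there is no conceptual difficulty, and the correctness-of-construction step for $\Twist(\AC)$ in the following subsection will depend on exactly this identity (in particular on the fact, provable from (4) and the Maurer--Cartan equation for $p$ and $q$, that $d^2 = 0$ on such morphism spaces once the twisted differential is incorporated).
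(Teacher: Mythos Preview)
Your proposal is correct and follows essentially the same approach as the paper: items (1)--(3) are dismissed as immediate from the definitions, and for (4) one first records $d(\twistcright{p}) = -p \circ \twistcright{p}$, derives $d(\twistcleft{q}) = \twistcleft{q} \circ q$ from the Leibniz rule and the closedness of identities, and then expands the three-fold composite via the graded Leibniz rule. One trivial slip: $\twistcleft{p} \circ \twistcright{p}$ equals $\id_{\twistcobj{A}{p}}$, not $\id_A$, but since every identity is closed this does not affect the argument.
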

\begin{proof}
    Again, the formulas for the degree, identity, and composition follow directly from the definitions.
    In order to prove the formula of the differential,
    first we note that by the definition of a twisted complex, we have
    \[d( \twistcright{p} ) = -p \circ \twistcright{p}.\]
    As a consequence of the Leibniz rule and the fact that identities are closed, the differential of the inverse morphism is given by
    \[d( \twistcleft{q} ) = \twistcleft{q} \circ q.\]
    From these two equations and the Leibniz rule on $3$ terms, the claim follows.
\end{proof}

\begin{construction}
    Let $\AC$ be a dg category.
    We construct its \emph{completion by twisted complexes}
    $\Twist( \AC )$ as a dg category:
    \begin{enumerate}
    \item Objects in $\Twist( \AC )$
    are given by pairs $(A, p)$
    of objects $A \in \AC$ and endomorphisms $A \xrightarrow{p} A$
    of degree $1$ satisfying the Maurer Cartan equation.
    We denote such a pair by $\twistcobj{A}{p}$.
    \item Morphisms from $\twistcobj{A}{p}$ to $\twistcobj{B}{q}$ are simply given
    by morphisms $\alpha: A \rightarrow B$ in $\AC$. Whenever we consider a morphism $\alpha$ in $\AC$
    as such a morphism in $\Twist( \AC )$, we denote it by $\twistc{q}{\alpha}{p}$.
    \item Degrees, identities, composition, and the differential are defined by the formulas stated in Lemma \ref{lemma:formulas_twisted_complexes}.
    \end{enumerate}
\end{construction}
\begin{proof}[Correctness of the construction]
    Composition is associative and the identities act as units since this is true in the underlying dg category $\AC$.
We need to check the Leibniz rule:
\begin{align*}
    &{\phantom{=}} d\left( {\twistc{r}{\beta}{q}} \circ {\twistc{q}{\alpha}{p}} \right) \\[1em]
    &= d\left( {\twistc{r}{(\beta \circ \alpha)}{p}} \right) \\[1em]
    &= {\twistc{r}{(r \circ \beta \circ \alpha)}{p}} + {\twistc{r}{d(\beta \circ \alpha)}{p}} + (-1)^{\abs{\beta \circ \alpha} + 1} {\twistc{r}{(\beta \circ \alpha \circ p)}{p}} \\[1em]
    &= {\twistc{r}{(r \circ \beta \circ \alpha)}{p}} + {\twistc{r}{(d(\beta) \circ \alpha)}{p}} + {\twistc{r}{((-1)^{\abs{\beta}}\beta \circ d(\alpha))}{p}} + (-1)^{\abs{\beta \circ \alpha} + 1} {\twistc{r}{(\beta \circ \alpha \circ p)}{p}} \\[1em]
    &= \left({\twistc{r}{(r \circ \beta)}{q}} + {\twistc{r}{(d\beta)}{q}}\right) \circ \twistc{q}{\alpha}{p}\\
    &\hspace{2em}+ (-1)^{\abs{\beta}}\twistc{r}{\beta}{q} \circ \left({\twistc{q}{( d\alpha)}{p}} + (-1)^{\abs{\alpha} + 1} {\twistc{q}{(\alpha \circ p)}{p}}\right) \\[1em]
    &= \left({\twistc{r}{(r \circ \beta)}{q}} + {\twistc{r}{(d\beta)}{q}} + (-1)^{\abs{\beta} + 1}\twistc{r}{(\beta \circ q)}{q}\right) \circ \twistc{q}{\alpha}{p}\\
    &\hspace{2em}+ (-1)^{|{\twistc{r}{\beta}{q}}|}\twistc{r}{\beta}{q} \circ \left(\twistc{q}{(q \circ \alpha)}{p} +  {\twistc{q}{( d\alpha)}{p}} + (-1)^{\abs{\alpha} + 1} {\twistc{q}{(\alpha \circ p)}{p}}\right) \\[1em]
    &= d( \twistc{r}{\beta}{q}) \circ \twistc{q}{\alpha}{p} + (-1)^{| \twistc{r}{\beta}{q}  |}\twistc{r}{\beta}{q} \circ d( \twistc{q}{\alpha}{p}) \qedhere
\end{align*}
\end{proof}

\begin{lemma}
    Let $\AC$ be a dg category. Then $\Twist(\AC)$ has twisted complexes.
\end{lemma}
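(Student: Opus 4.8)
The plan is to produce a twisted complex of a pair $(\twistcobj{A}{p},q)$ \emph{directly} inside $\Twist(\AC)$ by an explicit formula, using no hypothesis on $\AC$ beyond being a dg category (in particular not assuming that $\AC$ already has twisted complexes). First I would unwind the data: by the construction of $\Twist(\AC)$, an endomorphism $q$ of $\twistcobj{A}{p}$ is precisely a morphism $\bar q\colon A\to A$ in $\AC$, written $q=\twistc{p}{\bar q}{p}$, and by Lemma~\ref{lemma:formulas_twisted_complexes}(2) the condition $\abs{q}=1$ is the same as $\abs{\bar q}=1$.

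Next I would rewrite the Maurer--Cartan equation $d(q)+q^{2}=0$ in $\AC$-terms. Lemma~\ref{lemma:formulas_twisted_complexes}(3) gives $q^{2}=\twistc{p}{\bar q^{2}}{p}$, while Lemma~\ref{lemma:formulas_twisted_complexes}(4) — with $\abs{\bar q}=1$, so that the sign $(-1)^{\abs{\bar q}+1}$ equals $+1$ — gives
\[
    d(q)=\twistc{p}{\bigl(p\circ\bar q+d(\bar q)+\bar q\circ p\bigr)}{p}.
\]
Hence the Maurer--Cartan equation for $q$ is equivalent to $d(\bar q)+p\circ\bar q+\bar q\circ p+\bar q^{2}=0$ in $\AC$. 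Combining this with the Maurer--Cartan equation $d(p)+p^{2}=0$ for $p$ shows that $p+\bar q$ is again a degree~$1$ endomorphism of $A$ satisfying $d(p+\bar q)+(p+\bar q)^{2}=0$. This is the crucial observation: a Maurer--Cartan perturbation of $p$ over the twisted complex $\twistcobj{A}{p}$ is nothing but the passage from $p$ to $p+\bar q$.

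Guided by this, I would set
\[
    \twistcobj{\twistcobj{A}{p}}{q}:=\twistcobj{A}{p+\bar q},
    \qquad
    \iota:=\twistc{p}{\id_A}{p+\bar q}\colon\twistcobj{A}{p+\bar q}\longrightarrow\twistcobj{A}{p},
\]
observing that $\iota$ has degree $0$ by Lemma~\ref{lemma:formulas_twisted_complexes}(2) and is an isomorphism with inverse $\twistc{p+\bar q}{\id_A}{p}$ by Lemma~\ref{lemma:formulas_twisted_complexes}(1) and~(3). It then remains to verify the defining relation $d(\iota)+q\circ\iota=0$: Lemma~\ref{lemma:formulas_twisted_complexes}(4) applied with $\alpha=\id_A$ gives $d(\iota)=\twistc{p}{p}{p+\bar q}-\twistc{p}{(p+\bar q)}{p+\bar q}=-\twistc{p}{\bar q}{p+\bar q}$, and Lemma~\ref{lemma:formulas_twisted_complexes}(3) gives $q\circ\iota=\twistc{p}{\bar q}{p+\bar q}$, so the two cancel. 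Since every ingredient is an explicit formula, the assignment $(\twistcobj{A}{p},q)\mapsto(\twistcobj{A}{p+\bar q},\iota)$ is the required function equipping $\Twist(\AC)$ with twisted complexes.

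The argument involves no idea beyond this identification, so the only real work is the sign bookkeeping via Lemma~\ref{lemma:formulas_twisted_complexes}; the one point not to overlook is that everything hinges on $\abs{\bar q}=1$ making $(-1)^{\abs{\bar q}+1}=+1$, without which the Maurer--Cartan equation would not translate into the clean statement ``$p+\bar q$ is Maurer--Cartan''. A minor extra check is that $\iota$ is a genuine isomorphism of degree $0$ which is \emph{not} closed, as $d(\iota)=-\twistc{p}{\bar q}{p+\bar q}\neq 0$ in general.
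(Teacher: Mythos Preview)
Your proof is correct and follows essentially the same approach as the paper: you identify the twisted complex of $(\twistcobj{A}{p},q)$ as $\twistcobj{A}{p+\bar q}$ with structure isomorphism $\twistc{p}{\id_A}{p+\bar q}$, verify that $p+\bar q$ satisfies the Maurer--Cartan equation by unwinding the equation for $q$ via Lemma~\ref{lemma:formulas_twisted_complexes}(4), and check the defining relation $d(\iota)+q\circ\iota=0$ by direct computation. The only differences are notational (your $\bar q$ is the paper's $\alpha$) and that you are slightly more explicit in citing which parts of Lemma~\ref{lemma:formulas_twisted_complexes} you use at each step.
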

\begin{proof}
    Suppose we are given an endomorphism in $\Twist( \AC )$
    \[
        \twistcobj{A}{p} \xrightarrow{ \twistc{p}{\alpha}{p} } \twistcobj{A}{p}
    \]
    of degree $1$ satisfying the Maurer Cartan equation.
    We claim that this gives rise to the twisted complex
    with object $\twistcobj{A}{p + \alpha}$
    and isomorphism
    \[ \twistcobj{A}{p + \alpha} \xrightarrow{ \twistc{p}{(\id_A)}{p + \alpha} } \twistcobj{A}{p}.\]
    First, we need to see that $p + \alpha$ satisfies the Maurer Cartan equation.
    Since $\twistc{p}{\alpha}{p}$ satisfies the Maurer Cartan equation, we have
    \begin{align*}
        d(\twistc{p}{\alpha}{p}) + (\twistc{p}{\alpha}{p})^2 =
        \twistc{p}{(p \circ \alpha + d\alpha + (\alpha \circ p) + \alpha^2)}{p} = 0
    \end{align*}
    which implies $((p \circ \alpha) + d\alpha + (\alpha \circ p) + \alpha^2) = 0$. Moreover, since $dp + p^2 = 0$, we get
    \begin{align*}
        d( p + \alpha ) + (p + \alpha)^2 = dp + d\alpha + p^2 + p \circ \alpha + \alpha \circ p + \alpha^2 = 0.
    \end{align*}
    Second, we need to compute the differential of the desired isomorphism in order to see that is satisfies the defining property of a twisted complex:
    \begin{align*}
        d( \twistc{p}{(\id_A)}{p + \alpha} ) &= \twistc{p}{p}{p + \alpha} - \twistc{p}{(p+\alpha)}{p+\alpha}\\
        &= (-\twistc{p}{\alpha}{p+\alpha}) = (-\twistc{p}{\alpha}{p+\alpha}) \circ  \twistc{p}{(\id_A)}{p + \alpha} \qedhere
    \end{align*}
\end{proof}
    
\begin{remark}
    The notation by curly brackets introduced in Notation \ref{notation:complexes}
    applied to $\Twist(\AC)$ as a dg category with twisted complexes
    yields the same result as its defining symbolic notation with curly brackets, thus, there is no need for a formal distinction.
\end{remark}
    
\begin{example}\label{example:twist_for_k_cat}
    If $\AC$ is a $k$-category, then $\Twist((\AC_{\dg}^{[\bullet]})^{\oplus}) \simeq \Chbdg( \AC )$.
\end{example}

\subsubsection{The pretriangulated hull}\label{subsubsection:pretr}

Let $\AC$ be a dg category.
Its \emph{pretriangulated hull} $\pretr(\AC)$ can be constructed explicitly as the full dg subcategory
\[
    \pretr(\AC) \subseteq \Twist((\AC^{[\bullet]})^{\oplus})
\]
generated by those twisted complexes
\[
    \twistcobj{ \big(\bigoplus_{i = 1}^n A_i[ t_i ]\big) }{ (A_i[ t_i ] \xrightarrow{q_{ji}} A_j[ t_j ])_{ji} }
\]
with $n \in \N_0$, $A_i \in \AC$, $t_i \in \Z$ for $i = 1, \dots, n$,
such that $q = (q_{ji})_{ji}$ is a lower triangular matrix, i.e., $q_{ji} = 0$ if $j \geq i$ (cf. \cite[Definition 2.2]{Bod15}).
The objects in $\pretr(\AC)$ are called \emph{one-sided twisted complexes}.

More generally, we could also consider the full dg subcategory $\BC$ of $\Twist((\AC^{[\bullet]})^{\oplus})$
generated by those twisted complexes for which there exists a permutation of the summands such that
the resulting $q$ is a lower triangular matrix.
Since this permutation clearly defines a dg isomorphism to the original object, $\BC$ is equivalent to $\pretr(\AC)$.
This observation and Example \ref{example:twist_for_k_cat} yield the following lemma.

\begin{lemma}
    Let $\AC$ be a $k$-category.
    Then the inclusion $\pretr(\AC) \subseteq \Twist((\AC^{[\bullet]})^{\oplus})$
    yields the chain of equivalences
    \[
        \pretr(\AC_{\dg} ) \simeq \Twist((\AC_{\dg}^{[\bullet]})^{\oplus}) \simeq \Chbdg( \AC ).
    \]
\end{lemma}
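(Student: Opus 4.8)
The plan is to factor the inclusion of the statement as
\[
\pretr(\AC_{\dg}) \hookrightarrow \BC \hookrightarrow \Twist((\AC_{\dg}^{[\bullet]})^{\oplus}),
\]
where $\BC$ denotes the full dg subcategory considered just before the lemma --- the twisted complexes over $(\AC_{\dg}^{[\bullet]})^{\oplus}$ that become one-sided after a permutation of their summands --- and to show that the second inclusion is in fact an equality and the first one an equivalence. For the last equivalence $\Twist((\AC_{\dg}^{[\bullet]})^{\oplus}) \simeq \Chbdg(\AC)$ there is nothing new to prove: it is exactly Example~\ref{example:twist_for_k_cat}.

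The heart of the argument is a degree count establishing $\BC = \Twist((\AC_{\dg}^{[\bullet]})^{\oplus})$. First I would observe that since $\AC_{\dg}$ carries the zero differential, the defining formulas of the completions by translations and by direct sums force the differential of $(\AC_{\dg}^{[\bullet]})^{\oplus}$ to vanish as well, so the Maurer--Cartan equation for a twisted complex $\twistcobj{\big(\bigoplus_{i=1}^n A_i[t_i]\big)}{q}$ over that dg category collapses to $q^2 = 0$. Writing $q = (q_{ji})_{ji}$ with $q_{ji}\colon A_i \to A_j$ a morphism of $\AC$, Lemma~\ref{lemma:formulas_translations} gives that the component of $q$ from the $i$-th to the $j$-th summand has degree $\abs{q_{ji}} + t_i - t_j = t_i - t_j$, because every morphism of the $k$-category $\AC$ has degree $0$; since $q$ has degree $1$, this forces $q_{ji} = 0$ whenever $t_j \neq t_i - 1$, and in particular $q_{ji}$ can be nonzero only if $t_j < t_i$. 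Reindexing the finitely many summands so that $t_1 \le \dots \le t_n$ therefore makes $q$ lower triangular in the required sense $q_{ji} = 0$ for $j \ge i$, so every object of $\Twist((\AC_{\dg}^{[\bullet]})^{\oplus})$ already lies in $\BC$.

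It then remains only to invoke the observation recorded just above the lemma: the reindexing permutation is a closed degree-$0$ isomorphism (its differential vanishes by Lemma~\ref{lemma:formulas_twisted_complexes}, since it is assembled from identities and conjugates $q$ into its triangular form), so every object of $\BC$ is dg-isomorphic to a one-sided twisted complex, i.e.\ the inclusion $\pretr(\AC_{\dg}) \hookrightarrow \BC$ is an equivalence. Composing the three identifications produces the asserted chain; since each step is witnessed by genuine dg isomorphisms on objects together with the fullness of the subcategory inclusions, these are even quasi-equivalences. I do not expect a serious obstacle here --- the degree count is a one-liner --- and the only points that need care are aligning the monotonicity direction of the reindexing with the paper's convention for ``lower triangular'' and verifying that the reindexing morphism is closed, so that the identification already takes place at the level of $Z^0$ rather than merely up to homotopy.
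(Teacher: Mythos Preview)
Your proof is correct and follows essentially the same approach as the paper: the paper also combines the observation that $\BC \simeq \pretr(\AC)$ with Example~\ref{example:twist_for_k_cat}, leaving the identity $\BC = \Twist((\AC_{\dg}^{[\bullet]})^{\oplus})$ implicit in that example. Your contribution is to make the underlying degree count explicit, which is a welcome clarification but not a different strategy.
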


\subsection{Formal constructions of \Ainf-functors}\label{subsection:formal_a_inf_functors}

We follow the $b$-convention in our presentation of the theory of $A_{\infty}$-categories,
see als \cite{Ainf03} for a very detailed account.
Please recall the notational conventions concerning graded spaces given at the end of Section \ref{section:introduction}.

\subsubsection{Preliminaries}

We recall the definitions of \Ainf-categories and \Ainf-functors.

\begin{definition}\label{definition:a_inf_cat}
    An $A_{\infty}$-category $\AC$ consists of the following data:
    \begin{enumerate}
        \item A collection of objects $\Obj_{\AC}$.
        \item For all objects $A_1, A_2$, a graded space $(A_1, A_2)$, called \emph{morphisms from $A_1$ to $A_2$}.
        \item For every $n \geq 1$ and objects $A_1, \dots, A_{n+1}$, a graded map
        \[
            b_n: S(A_n, A_{n+1}) \otimes \dots \otimes S(A_1, A_2) \longrightarrow S(A_1, A_{n+1})
        \]
        of degree $1$.
        \item For every $n \geq 1$, the equation
        \begin{equation}\label{equation:n_gen_b_gen_zero}
            \sum_{ \substack{i + j + l = n \\ (j \geq 1) \\ (i,l \geq 0) } } b_{i + 1 + l} \circ ( 1^{\otimes{i}} \otimes b_j \otimes 1^{\otimes{l}} ) = 0
        \end{equation}
        holds, where $1^{\otimes{i}}$ denotes the identity of the $i$ factors to the left, i.e., 
        the identity of $S(A_n, A_{n+1}) \otimes \dots \otimes S(A_{n-i+1}, A_{n-i+2})$,
        and $1^{\otimes{l}}$ denotes the identity of the $l$ factors to the right, i.e., 
        the identity of $S(A_{l}, A_{l+1}) \otimes \dots \otimes S(A_{1}, A_{2})$.
    \end{enumerate}
    Usually, the definition of an $A_{\infty}$-category ends here.
    However, since we need strict identities, we will make them part of the definition.
    \begin{enumerate}
        \setcounter{enumi}{4}
        \item For all objects $A$, a morphism $\id_A \in (A,A)$ of degree $0$, called \emph{strict identity}.
        \item For all objects ${A_1},{A_2}$ and $\alpha \in ({A_1},{A_2})$, we have
        \begin{itemize}
            \setlength\itemsep{0.4em}
            \item $b_2( \degdown{\alpha}, \degdown{\id_{A_1}} ) = (-1)^{\abs{\alpha}}\degdown{\alpha}$,
            \item $b_2( \degdown{\id_{A_2}}, \degdown{\alpha} ) = \degdown{\alpha}$,
            \item $b_i( \dots, \degdown{\id_{A_1}}, \dots ) = 0$ for $i \neq 2$, i.e., it vanishes whenever a strict identity occurs as an argument.
        \end{itemize}
    \end{enumerate}
\end{definition}

\begin{remark}\label{remark:special_cases}
    We look at Equation \eqref{equation:n_gen_b_gen_zero} in small and special cases:
    \begin{center}
        \begin{tabular}{c|c|c}
            $\mathbf{n}$ & special case & formula \\
            \hline
            $1$ & - &  $b_1 \circ b_1 = 0$\\
            \hline
            $2$ & - &  $b_2 \circ ( 1 \otimes b_1) + b_2 \circ ( b_1 \otimes 1) + b_1 \circ b_2 = 0$ \\
            \hline
            $3$ & $b_3 = 0$ & $b_2 \circ ( 1 \otimes b_2) + b_2 \circ ( b_2 \otimes 1) = 0$
        \end{tabular}
    \end{center}
\end{remark}

\begin{construction}[From dg categories to $A_{\infty}$-categories]\label{construction:dg_to_Ainf}
    Let $\AC$ be a dg category.
    Then we construct an $A_{\infty}$-category $\AC_{\infty}$ as follows:
    \begin{itemize}
        \item $\Obj_{\AC} = \Obj_{\AC_{\infty}}$ and the same for all morphism spaces and identities.
        \item For objects $A_1, A_2$ and morphism $\alpha \in (A_1, A_2)$, we set
        \[ b_1(\degdown{\alpha}) := -\degdown{d(\alpha)}. \]
        \item For objects $A_1, A_2, A_3$ and morphisms
        $\alpha \in (A_2, A_3), \beta \in (A_1, A_2)$, we set
        \[b_2( \degdown{\alpha} \otimes \degdown{\beta} ) := (-1)^{\abs{\alpha}} \degdown{( \alpha \circ \beta )}.\]
        \item For $n \geq 3$, we set $b_n := 0$.
    \end{itemize}
    
    Conversely, every $A_{\infty}$-category $\BC$ with $b_i = 0$ for $i > 2$ gives rise to a dg category
    with the same objects and graded morphism spaces, and
    \begin{itemize}
        \setlength\itemsep{0.4em}
        \item $d\beta := (-1)\degup{b_1(\degdown{\beta})}$ for $\beta \in (B_1, B_{2})$,
        \item $\alpha \circ \beta := (-1)^{\abs{\alpha}}\degup{b_2( \degdown{\alpha} \otimes \degdown{\beta} )}$ for $\beta \in (B_1, B_{2})$, $\alpha \in (B_2, B_3)$,
    \end{itemize}
    where $B_1, B_2, B_3 \in \BC$.
\end{construction}
\begin{proof}[Correctness of the construction]
    The special cases enlisted in Remark \ref{remark:special_cases} translate exactly to the defining equations of a dg category.
\end{proof}

\begin{definition}\label{definition:a_inf_func}
    Let $\AC$, $\BC$ be $A_{\infty}$-categories.
    An $A_{\infty}$-functor $f: \AC \rightarrow \BC$ consists of the following data:
    \begin{enumerate}
        \item A map $f: \Obj_{\AC} \rightarrow \Obj_{\BC}$.
        \item For every $n \geq 1$ and objects $A_1, \dots, A_{n+1}$ in $\AC$, a graded map
        \[
            f_n: S(A_n, A_{n+1}) \otimes \dots \otimes S(A_1, A_2) \longrightarrow S(fA_1, fA_{n+1})
        \]
        of degree $0$.
        \item For every $n \geq 1$, the equation \[
            \sum_{\substack{i + j + l = n \\ (j \geq 1) \\ (i,l \geq 0)}}f_{i + 1 + l} \circ ( 1^{\otimes i} \otimes b_j \otimes 1^{\otimes l} ) 
            = 
            \sum_{
                \substack{i_1 + \dots + i_s = n \\ s \geq 1 \\ (i_1, \dots, i_s \geq 1)}
                } 
                b_s \circ (f_{i_1} \otimes \dots \otimes f_{i_s})
        \]
        holds ($f_{\bullet}$ commutes with $b_{\bullet}$, and the equations are linear w.r.t.\ the $b$-maps).
        \item For every object $A$ in $\AC$, we have \[
            f_1( \degdown{\id_A} ) = \degdown{\id_{fA}}
        \]
        and \[
            f_n( \dots, \degdown{\id_A}, \dots ) = 0
        \]
        for $n \geq 2$ (strict units are strictly respected).
        
    \end{enumerate}
\end{definition}

\begin{remark}\label{remark:main_equation}
    Let $\AC$, $\BC$ be dg categories.
An $A_{\infty}$-functor $f: \AC_{\infty} \rightarrow \BC_{\infty}$ satisfies
\begin{equation}\label{equation:main}
    \begin{split}
        &\phantom{{=}} \overbrace{\sum_{{i = 0}}^{n-1}f_{n} \circ \big( 1^{\otimes i} \otimes b_1 \otimes 1^{\otimes {n-(i+1)}} \big)}^{\text{(A)-term}}
        +
        \overbrace{\sum_{{i = 0}}^{n-2}f_{n-1} \circ \big( 1^{\otimes i} \otimes b_2 \otimes 1^{\otimes {n-(i+2)}} \big)}^{\text{(B)-term}}
         \\
        &{=} \underbrace{\sum_{i=1}^{n-1} b_2 \circ \big( f_{i} \otimes f_{n-i} \big)}_{\text{(C)-term}}
        +
        \underbrace{b_1 \circ f_n}_{\text{(D)-term}}
    \end{split}
\end{equation}
which means that for every $n \geq 1$,
the maps $f_1, \dots, f_{n-1}$ are compatible with $b_2$
up to a term specified by $f_n$ and $b_1$.
The special case $n=1$ yields sums over empty terms and thus the special formula
\[
    f_1 \circ b_1 = b_1 \circ f_1.
\]
In order to make all terms well-defined, we may set $f_0 := 0$.
\end{remark}

\begin{definition}
    Any \Ainf-functor $f: \AC \rightarrow \BC$ between dg categories induces a functor 
    $\Hzero(f): \Hzero( \AC ) \rightarrow \Hzero( \BC)$
    on their corresponding homotopy categories by setting
    \[
        \Hzero(f)( A ) \coloneqq f(A)
    \]
    on objects $A \in \AC$ and
    \[
        \Hzero(f)( \resclass{\alpha} ) \coloneqq \resclass{\degup{f_1(\degdown{\alpha})}}
    \]
    on morphisms $\alpha \in \AC$.
\end{definition}

\subsubsection{Lift to completion by direct sums}\label{subsubsection:lift_direct_sums}

\begin{notation}
    Applying Fukaya's sign convention to the following term yields
\[
    (f \otimes g \otimes h)( \alpha \otimes \beta \otimes \gamma) = (-1)^{\abs{ \alpha }\abs{ g } + \abs{ \alpha }\abs{ h } + \abs{ h }\abs{ \beta } } f( \alpha ) \otimes g( \beta ) \otimes h( \gamma ).
\]
A typical occurrence of this situation is the term
\[
    (1^{\otimes i} \otimes b_j \otimes 1^{\otimes l} )( \degdown{\alpha_n} \otimes \dots \otimes \degdown{\alpha_1})
\]
yielding the sign
\[
    \sigma(i) := (-1)^{ \sum_{j= 0}^{i-1} \abs{ \degdown{\alpha_{n-j}}}} = (-1)^{ \sum_{j= 0}^{i-1} \abs{ \alpha_{n-j}} - i}.
\]
We denote this sign simply by $\sigma(i)$ since in all contexts
in which we will use this sign, the prescribed sequences of morphisms
will be clear from context.
\end{notation}

\begin{notation}
    If $\BC$ is a dg category with direct sums,
    and $( \beta_{ji} )_{ji}$ a morphism in $\BC$ given by a matrix,
    we simplify notation w.r.t.\ degree shifts by $( \degdown{\beta_{ji}} )_{ji} \coloneqq \degdown{( \beta_{ji} )_{ji}}$.
\end{notation}

\begin{construction}\label{construction:lift_direct_sums}
    Let $\AC$ be a dg category and $\BC$ be a dg category with direct sums.
    Let $f: \AC_{\infty} \longrightarrow \BC_{\infty}$ be an $A_{\infty}$-functor.
We are going to construct an $A_{\infty}$-functor
\[
    F: (\AC^{\oplus})_{\infty} \longrightarrow \BC_{\infty}
\]
extending $f$ w.r.t.\ the natural inclusion $\AC_{\infty} \hookrightarrow (\AC^{\oplus})_{\infty}$.
We set 
\[
    F( \oplus_{i=1}^s A_i ) := \oplus_{i=1}^s f(A_i)
\]
on objects.
Moreover, for $n \geq 1$, suppose given a chain of morphisms 
\begin{center}
    \begin{tikzpicture}[label/.style={postaction={
    decorate,
    decoration={markings, mark=at position .5 with \node #1;}},
    mylabel/.style={thick, draw=none, align=center, minimum width=0.5cm, minimum height=0.5cm,fill=white}}]
    \coordinate (r) at (4,0);
    \node (A) {$\oplus_{i_{n+1}} A_{i_{n+1}}$};
    \node (B) at ($(A)+(r)$) {$\oplus_{i_{n}} A_{i_{n}}$};
    \node (C) at ($(B) + 0.5*(r)$){$\dots$};
    \node (D) at ($(C)+0.4*(r)$) {$\oplus_{i_{2}} A_{i_{2}}$};
    \node (E) at ($(D)+(r)$) {$\oplus_{i_{1}} A_{i_{1}}$};
    
    \draw[->,thick,label={[above]{$\big( \alpha^{n}_{i_{n+1} i_{n}}\big)_{i_{n+1} i_{n}}$}}] (B) -- (A);
    \draw[->,thick] (C) -- (B);
    
    \draw[->,thick] (D) -- (C);
    \draw[->,thick,label={[above]{$\big( \alpha^{1}_{i_2 i_{1}}\big)_{i_2 i_{1}}$}}] (E) -- (D);
    \end{tikzpicture}
\end{center}
in $\AC^{\oplus}$. For all $n \geq 1$, we set 
\begin{align*}
    F_n\left(  \degdown{(\alpha^{n}_{i_{n+1} i_{n}})_{i_{n+1} i_{n}}}, \dots, \degdown{( \alpha^{1}_{i_2 i_{1}})_{i_2 i_{1}}} \right) :=
    \left(
        \sum_{(i_2, \dots, i_n)}
        f_n\left( \degdown{\alpha^{n}_{i_{n+1} i_{n}}}, \dots, \degdown{\alpha^{1}_{i_2 i_{1}}} \right)
    \right)_{i_{n+1}i_1}
\end{align*}
on such chains of morphisms.
Note that the sum is taken over all tuples $(i_2, \dots, i_n)$
that constitute
of indices occurring in the intermediate objects. In the special case $n = 1$,
there only is one such tuple, namely the empty tuple $()$,
and the definition degenerates to
\begin{align*}
    F_1\left( \degdown{(\alpha^{1}_{i_{2} i_{1}})_{i_{2} i_{1}}} \right) =
    \left(f_1 \degdown{\alpha^{1}_{i_{2} i_{1}}}\right)_{i_{2} i_{1}}.
\end{align*}

\end{construction}
\begin{proof}[Correctness of construction]
    Clearly, $F$ extends $f$.
    Furthermore, it is compatible with strict identities.
    For $n \geq 1$, we need to show that Equation \eqref{equation:main} holds.
    We evaluate the (A)-(D) terms of Equation \eqref{equation:main} at $\big(  \degdown{(\alpha^{n}_{i_{n+1} i_{n}})_{i_{n+1} i_{n}}} , \dots , \degdown{(\alpha^{1}_{i_{2} i_{1}})_{i_{2} i_{1}}} \big)$.
    \begingroup
    \\ \\
    \allowdisplaybreaks
    (A)-term:
    {
    \begin{align*}
    &\phantom{=}
        \sum_{{i = 0}}^{n-1}F_{n} \circ \big( 1^{\otimes i} \otimes b_1 \otimes 1^{\otimes {n-(i+1)}} \big)
        \big(  \degdown{(\alpha^{n}_{i_{n+1} i_{n}})_{i_{n+1} i_{n}}} , \dots , \degdown{(\alpha^{1}_{i_{2} i_{1}})_{i_{2} i_{1}}} \big)
    \\[1em]
    &{=}
    \sum_{{i = 0}}^{n-1}F_{n}
    \big(  \degdown{(\alpha^{n}_{i_{n+1} i_{n}})_{i_{n+1} i_{n}}}, \dots b_1\degdown{( \alpha^{n-i}_{i_{n+1-i} i_{n-i}})_{i_{n+1-i} i_{n-i}}}, \dots \degdown{(\alpha^{1}_{i_{2} i_{1}})_{i_{2} i_{1}}} \big)(-1)^{\sigma(i)} \\[1em]
    &{=}
    \sum_{{i = 0}}^{n-1}
    \big(
    \sum_{(i_2, \dots, i_n)}
    f_{n}
    \big(  \degdown{\alpha^{n}_{i_{n+1} i_{n}}}, \dots b_1\degdown{\alpha^{n-i}_{i_{n+1-i} i_{n-i}}}, \dots \degdown{\alpha^{1}_{i_{2} i_{1}}} \big)
    \big)_{i_{n+1}i_1}
    (-1)^{\sigma(i)} \\[1em]
    &{=}
    \sum_{(i_2, \dots, i_n)}
    \big(
    \sum_{{i = 0}}^{n-1}
    f_{n}
    \big( \degdown{\alpha^{n}_{i_{n+1} i_{n}}}, \dots b_1\degdown{\alpha^{n-i}_{i_{n+1-i} i_{n-i}}}, \dots \degdown{\alpha^{1}_{i_{2} i_{1}}} \big)(-1)^{\sigma(i)}
    \big)_{i_{n+1}i_1}\\[1em]
    &{=}
    \sum_{(i_2, \dots, i_n)}
    \big(
    \sum_{{i = 0}}^{n-1}
    f_{n} \circ \big( 1^{\otimes i} \otimes b_1 \otimes 1^{\otimes {n-(i+1)}} \big)
    \big( \degdown{\alpha^{n}_{i_{n+1} i_{n}}}, \dots, \degdown{\alpha^{1}_{i_2 i_{1}}} \big)
    \big)_{i_{n+1}i_1}
    \end{align*}
    }
    \\ \\
    (B)-term:
    {
        \tiny
    \begin{align*}
    &\phantom{=}
        \sum_{{i = 0}}^{n-2}F_{n-1} \circ \big( 1^{\otimes i} \otimes b_2 \otimes 1^{\otimes {n-(i+2)}} \big)
        \big(  \degdown{(\alpha^{n}_{i_{n+1} i_{n}})_{i_{n+1} i_{n}}} , \dots , \degdown{(\alpha^{1}_{i_{2} i_{1}})_{i_{2} i_{1}}} \big)
    \\[1em]
    &{=}
    \sum_{{i = 0}}^{n-2}F_{n-1}
    \big(  \degdown{(\alpha^{n}_{i_{n+1} i_{n}})_{i_{n+1} i_{n}}}, 
    \dots b_2\big(\degdown{( \alpha^{n-i}_{i_{n+1-i} i_{n-i}})_{i_{n+1-i} i_{n-i}}}, \degdown{( \alpha^{n-i-1}_{i_{n-i} i_{n-i-1}})_{i_{n-i} i_{n-i-1}}} \big),
    \dots \degdown{(\alpha^{1}_{i_{2} i_{1}})_{i_{2} i_{1}}} \big)(-1)^{\sigma(i)} \\[1em]
    &{=}
    \sum_{{i = 0}}^{n-2}F_{n-1}
    \big(  \degdown{(\alpha^{n}_{i_{n+1} i_{n}})_{i_{n+1} i_{n}}}, 
    \dots {\big(\sum_{i_{n-i}}{b_2 (\degdown{ \alpha^{n-i}_{i_{n+1-i} i_{n-i}}}, \degdown{ \alpha^{n-i-1}_{i_{n-i} i_{n-i-1}}})} \big)_{i_{n-i+1}, i_{n-i-1}}},
    \dots \degdown{(\alpha^{1}_{i_{2} i_{1}})_{i_{2} i_{1}}} \big)(-1)^{\sigma(i)} \\[1em]
    &{=}
    \sum_{{i = 0}}^{n-2}
    \big(
    \sum_{(i_2, \dots,\widehat{i_{n-i}}, \dots, i_n)}
    f_{n-1}
    \big(  \degdown{\alpha^{n}_{i_{n+1} i_{n}}},
    \dots \sum_{i_{n-i}}b_2\big(\degdown{\alpha^{n-i}_{i_{n+1-i} i_{n-i}}}, \degdown{\alpha^{n-i-1}_{i_{n-i} i_{n-i-1}}} \big),
    \dots \degdown{\alpha^{1}_{i_{2} i_{1}}} \big)
    \big)_{i_{n+1}i_1}
    (-1)^{\sigma(i)} \\[1em]
    &{=}
    \sum_{{i = 0}}^{n-2}
    \big(
    \sum_{(i_2, \dots, i_n)}
    f_{n-1}
    \big(  \degdown{\alpha^{n}_{i_{n+1} i_{n}}},
    \dots b_2\big(\degdown{\alpha^{n-i}_{i_{n+1-i} i_{n-i}}}, \degdown{\alpha^{n-i-1}_{i_{n-i} i_{n-i-1}}} \big),
    \dots \degdown{\alpha^{1}_{i_{2} i_{1}}} \big)
    \big)_{i_{n+1}i_1}
    (-1)^{\sigma(i)} \\[1em]
    &{=}
    \sum_{(i_2, \dots, i_n)}
    \big(
    \sum_{{i = 0}}^{n-2}
    f_{n-1}
    \big(  \degdown{\alpha^{n}_{i_{n+1} i_{n}}},
    \dots b_2\big(\degdown{\alpha^{n-i}_{i_{n+1-i} i_{n-i}}}, \degdown{\alpha^{n-i-1}_{i_{n-i} i_{n-i-1}}} \big),
    \dots \degdown{\alpha^{1}_{i_{2} i_{1}}} \big)
    (-1)^{\sigma(i)}\big)_{i_{n+1}i_1}
     \\[1em]
     &{=}
     \sum_{(i_2, \dots, i_n)}
     \big(
     \sum_{{i = 0}}^{n-2}f_{n-1} \circ \big( 1^{\otimes i} \otimes b_2 \otimes 1^{\otimes {n-(i+1)}} \big)
    \big( \degdown{\alpha^{n}_{i_{n+1} i_{n}}} , \dots , \degdown{\alpha^{1}_{i_{2} i_{1}}} \big)
    \big)_{i_{n+1}i_1}
    \end{align*}
    }
    \\ \\
    (C)-term:
    {
    \begin{align*}
        &\phantom{=}\sum_{i=0}^{n-1}b_2 \circ (F_i \otimes F_{n-i})\big(  \degdown{(\alpha^{n}_{i_{n+1} i_{n}})_{i_{n+1} i_{n}}} , \dots , \degdown{(\alpha^{1}_{i_{2} i_{1}})_{i_{2} i_{1}}} \big)\\[1em]
        &{=}
        \sum_{i=0}^{n-1}b_2 \big(F_i( \degdown{(\alpha^{n}_{i_{n+1} i_{n}})_{i_{n+1} i_{n}}}, \dots ), F_{n-i}( \dots, \degdown{(\alpha^{1}_{i_{2} i_{1}})_{i_{2} i_{1}}}  ) \big) \\[1em]
        &{=}
        \sum_{i=0}^{n-1}b_2 (
            \big( \sum_{(i_{n-i+2,\dots,i_n})} f_i( \degdown{\alpha^{n}_{i_{n+1} i_{n}}}, \dots ) \big)_{i_{n+1},i_{n-i+1}},
            \big( \sum_{(i_{2,\dots,i_{n-i}})}f_{n-i}( \dots, \degdown{\alpha^{1}_{i_{2} i_{1}}}  ) \big)_{i_{n-i+1,i_1}}
        )\\[1em]
        &{=}
        \sum_{i=0}^{n-1}
            \big( 
                \sum_{(i_2,\dots,i_n)} 
                b_2(
                f_i( \degdown{\alpha^{n}_{i_{n+1} i_{n}}}, \dots ),
                f_{n-i}( \dots, \degdown{\alpha^{1}_{i_{2} i_{1}}}  )
                )
            \big)_{i_{n+1},i_{1}} \\[1em]
        &{=}
        \sum_{(i_2,\dots,i_n)} 
            \big(
                \sum_{i=0}^{n-1}
                b_2(
                f_i( \degdown{\alpha^{n}_{i_{n+1} i_{n}}}, \dots ),
                f_{n-i}( \dots, \degdown{\alpha^{1}_{i_{2} i_{1}}}  )
                )
            \big)_{i_{n+1},i_{1}} \\[1em]
        &{=}
        \sum_{(i_2,\dots,i_n)} 
            \big(
                \sum_{i=0}^{n-1}
                b_2 \circ (f_i \otimes f_{n-i})( \degdown{\alpha^{n}_{i_{n+1} i_{n}}}, \dots, \degdown{\alpha^{1}_{i_{2} i_{1}}}  )
            \big)_{i_{n+1},i_{1}}
    \end{align*}
    }
    \\ \\
    (D)-term:
    {
    \begin{align*}
        &\phantom{=}b_1 \circ F_n\big(  \degdown{(\alpha^{n}_{i_{n+1} i_{n}})_{i_{n+1} i_{n}}} , \dots , \degdown{(\alpha^{1}_{i_{2} i_{1}})_{i_{2} i_{1}}} \big)\\[1em]
        &{=}
        b_1 \big(
        \sum_{(i_2, \dots, i_n)}
        f_n\big( \degdown{\alpha^{n}_{i_{n+1} i_{n}}}, \dots, \degdown{\alpha^{1}_{i_2 i_{1}}} \big)
    \big)_{i_{n+1}i_1}\\[1em]
    &{=}
        \sum_{(i_2, \dots, i_n)}
        \big(
        b_1 \circ
        f_n\big( \degdown{\alpha^{n}_{i_{n+1} i_{n}}}, \dots, \degdown{\alpha^{1}_{i_2 i_{1}}} \big)
    \big)_{i_{n+1}i_1}\\[1em]
    \end{align*}
    }
    \endgroup
    Thus, we see that Equation \eqref{equation:main} holds for the $F_n$,
    since it is obtained by
    evaluating Equation \eqref{equation:main} for the $f_n$ at $( \degdown{\alpha^{n}_{i_{n+1} i_{n}}}, \dots, \degdown{\alpha^{1}_{i_2 i_{1}}} )$,
    forming a matrix indexed by ${i_{n+1}i_1}$,
    and then summing these evaluations over all indices $(i_2, \dots, i_n)$.
\end{proof}

\subsubsection{Lift to completion by translations}\label{subsubsection:lift_translations}

\begin{notation}
    If $\BC$ is a dg category with translations,
    and $\trans{i}{\beta}{j}$ a morphism in $\BC$,
    we simplify notation w.r.t.\ degree shifts by $\trans{i}{\degdown{\beta}}{j} \coloneqq \degdown{ \trans{i}{{\beta}}{j} }$.
\end{notation}

\begin{construction}\label{construction:lift_to_translations}
Let $\AC$ be a dg category and $\BC$ be a dg category with translations.
    Let $f: \AC_{\infty} \longrightarrow \BC_{\infty}$ be an $A_{\infty}$-functor.
We are going to construct an $A_{\infty}$-functor
\[
    F: (\AC^{[\bullet]})_{\infty} \longrightarrow \BC_{\infty}
\]
extending $f$ w.r.t.\ the natural inclusion $\AC_{\infty} \rightarrow (\AC^{[\bullet]})_{\infty}$.
We set 
\[
    F( A[i] ) := f(A)[i]
\]
on objects.
Moreover, for $n \geq 1$, suppose given a chain of morphisms 
\begin{center}
    \begin{tikzpicture}[label/.style={postaction={
    decorate,
    decoration={markings, mark=at position .5 with \node #1;}},
    mylabel/.style={thick, draw=none, align=center, minimum width=0.5cm, minimum height=0.5cm,fill=white}}]
    \coordinate (r) at (4,0);
    \node (A) {$A_{{n+1}}[i_{n+1}]$};
    \node (B) at ($(A)+(r)$) {$A_{{n}}[i_n]$};
    \node (C) at ($(B) + 0.5*(r)$){$\dots$};
    \node (D) at ($(C)+0.4*(r)$) {$A_{{2}}{[i_2]}$};
    \node (E) at ($(D)+(r)$) {$A_{{1}}[i_1]$};
    
    \draw[->,thick,label={[above]{$\trans{i_{n+1}}{\alpha^n}{i_{n}}$}}] (B) -- (A);
    \draw[->,thick] (C) -- (B);
    
    \draw[->,thick] (D) -- (C);
    \draw[->,thick,label={[above]{$\trans{i_{2}}{\alpha^1}{i_{1}}$}}] (E) -- (D);
    \end{tikzpicture}
\end{center}
in $\AC^{[\bullet]}$. For all $n \geq 1$, we set 
\begin{align*}
    F_n\left(  \degdown{ \trans{i_{n+1}}{\alpha^n}{i_{n}}  }, \dots, \degdown{ \trans{i_{2}}{\alpha^1}{i_{1}} } \right) :=
    \trans{i_{n+1}}{f_n( \degdown{\alpha^n}, \dots, \degdown{\alpha^1} )}{i_1} \cdot (-1)^{\sum_{j=1}^{n+1}i_j}
\end{align*}
on such chains of morphisms.
\end{construction}
\begin{proof}[Correctness of construction]
    Clearly, $F$ extends $f$.
    Furthermore, it is compatible with strict identities.
    For morphisms $\alpha, \beta$ in $\AC$, $i,j,l \in \Z$,
    we compute $b_1$ and $b_2$ within $(\AC^{[\bullet]})_{\infty}$:
    \begin{align*}
        b_1( \degdown{\trans{j}{\alpha}{i}} )
        &= \degdown{-d( \trans{j}{\alpha}{i} )} \\[1em]
        &= (-1)^j\degdown{\trans{j}{-d\alpha}{i}}
        = (-1)^j\trans{j}{b_1\degdown{\alpha}}{i}
    \end{align*}
    and
    \begin{align*}
        b_2( \degdown{\trans{l}{\beta}{j}}, \degdown{\trans{j}{\alpha}{i}} )
        &= (-1)^{\abs{\beta} + j - l}( \degdown{\trans{l}{\beta}{j} \circ \trans{j}{\alpha}{i}} ) \\[1em]
        &= (-1)^{\abs{\beta} + j - l}( \degdown{\trans{l}{\beta \circ \alpha}{i}} )
        = (-1)^{j - l}{\trans{l}{b_2(\degdown{\beta}, \degdown{\alpha})}{i}}.
    \end{align*}
    Now, we will show that Equation \eqref{equation:main} for the $F_n$ can be obtained
    from Equation \eqref{equation:main} for the $f_n$
    by applying
    $\trans{i_{n+1}}{(-)}{i_1} \cdot (-1)^{-i_{n+1}  + \sum_{j=1}^{n+1}i_j}$.
    We will prove this fact for each summand of each term in Equation \eqref{equation:main}:
    \begingroup
    \allowdisplaybreaks
    \\ \\
    $i$-th summand of (A)-term:
    \begin{align*}
        &\phantom{=} F_n \circ ( 1^{\otimes i} \otimes b_1 \otimes 1^{\otimes n - (i+1)}) (\degdown{ \trans{i_{n+1}}{\alpha^n}{i_{n}}  }, \dots, \degdown{ \trans{i_{2}}{\alpha^1}{i_{1}} })\\[1em]
        &{=} F_n (\degdown{ \trans{i_{n+1}}{\alpha^n}{i_{n}}  },
        \dots,
        b_1( \degdown{\trans{i_{n-i+1}}{\alpha^{n-i}}{i_{n-i}}} ),
        \dots,
        \degdown{ \trans{i_{2}}{\alpha^1}{i_{1}} })
        \cdot (-1)^{\sigma(i) + i_{n-i+1} - i_{n+1} }
        \\[1em]
        &{=} F_n (\degdown{ \trans{i_{n+1}}{\alpha^n}{i_{n}}  },
        \dots,
        \trans{i_{n-i+1}}{b_1\degdown{{\alpha^{n-i}}}}{i_{n-i}},
        \dots,
        \degdown{ \trans{i_{2}}{\alpha^1}{i_{1}} })
        \cdot (-1)^{\sigma(i) - i_{n+1} }
        \\[1em]
        &{=} \trans{i_{n+1}}
        {f_n (\degdown{ {\alpha^n} },
        \dots,
        {b_1\degdown{{\alpha^{n-i}}}},
        \dots,
        \degdown{ {\alpha^1} })}
        {i_1}
        \cdot (-1)^{\sigma(i) - i_{n+1} + \sum_{j=1}^{n+1} i_j }
        \\[1em]
        &{=} \trans{i_{n+1}}
        {
            f_n \circ ( 1^{\otimes i} \otimes b_1 \otimes 1^{\otimes n - (i+1)})
            (\degdown{\alpha^n}, \dots, \degdown{\alpha^1})
        }
        {i_1}
        \cdot (-1)^{- i_{n+1} + \sum_{j=1}^{n+1} i_j }
    \end{align*}
    \\ \\
    $i$-th summand of (B)-term:
    {
        \tiny
    \begin{align*}
        &\phantom{=} F_{n-1} \circ ( 1^{\otimes i} \otimes b_2 \otimes 1^{\otimes n - (i+2)}) (\degdown{ \trans{i_{n+1}}{\alpha^n}{i_{n}}  }, \dots, \degdown{ \trans{i_{2}}{\alpha^1}{i_{1}} })\\[1em]
        &{=} F_{n-1} (\degdown{ \trans{i_{n+1}}{\alpha^n}{i_{n}}  },
        \dots,
        b_2( \degdown{\trans{i_{n-i+1}}{\alpha^{n-i}}{i_{n-i}}}, \degdown{\trans{i_{n-i}}{\alpha^{n-i-1}}{i_{n-i-1}}} ),
        \dots,
        \degdown{ \trans{i_{2}}{\alpha^1}{i_{1}} })
        \cdot (-1)^{\sigma(i) + i_{n-i+1} - i_{n+1} }
        \\[1em]
        &{=} F_{n-1} (\degdown{ \trans{i_{n+1}}{\alpha^n}{i_{n}}  },
        \dots,
        \trans{i_{n-i+1}}{b_2(\degdown{\alpha^{n-i}}, \degdown{\alpha^{n-i-1}})}{i_{n-i-1}},
        \dots,
        \degdown{ \trans{i_{2}}{\alpha^1}{i_{1}} })
        \cdot (-1)^{\sigma(i) + i_{n-i} - i_{n+1} }
        \\[1em]
        &{=}
        \trans{i_{n+1}}
        {
        f_{n-1} (\degdown{ {\alpha^n}  },
        \dots,
        {b_2(\degdown{\alpha^{n-i}}, \degdown{\alpha^{n-i-1}})},
        \dots,
        \degdown{ {\alpha^1} })
        }
        {i_{1}}
        \cdot (-1)^{\sigma(i) + i_{n-i} - i_{n+1} + \sum_{j=1, j\neq n-i}^{n+1}i_j }
        \\[1em]
        &{=}
        \trans{i_{n+1}}
        {
            f_{n-1} \circ ( 1^{\otimes i} \otimes b_2 \otimes 1^{\otimes n - (i+2)})
            (\degdown{\alpha^n}, \dots, \degdown{\alpha^1})
        }
        {i_{1}}
        \cdot (-1)^{- i_{n+1} + \sum_{j=1}^{n+1}i_j }
    \end{align*}
    }
    \\ \\
    $i$-th summand of (C)-term:
    \begin{align*}
        &\phantom{=} b_2 \circ (F_i \otimes F_{n-i}) (\degdown{ \trans{i_{n+1}}{\alpha^n}{i_{n}}  }, \dots, \degdown{ \trans{i_{2}}{\alpha^1}{i_{1}} })\\[1em]
        &{=} b_2\big( F_i( \degdown{ \trans{i_{n+1}}{\alpha^n}{i_{n}}  }, \dots ), F_{n-i}( \dots, \degdown{ \trans{i_{2}}{\alpha^1}{i_{1}} } ) \big)\\[1em]
        &{=} b_2\big( \trans{i_{n+1}}{f_i( \degdown{ {\alpha^n} }, \dots )}{i_{n-i+1}}, \trans{i_{n-i+1}}{f_{n-i}( \dots, \degdown{ {\alpha^1} } )}{i_1} \big)
        \cdot (-1)^{\sum_{j=1, j \neq n-i+1}^{n+1}i_j}
        \\[1em]
        &{=} \trans{i_{n+1}}{ b_2\big(f_i( \degdown{ {\alpha^n} }, \dots ), f_{n-i}( \dots, \degdown{ {\alpha^1} } )\big)}{i_1}
        \cdot (-1)^{-i_{n+1} + i_{n-i+1} + \sum_{j=1, j \neq n-i+1}^{n+1}i_j}
        \\[1em]
        &{=} \trans{i_{n+1}} {b_2 \circ (f_i \otimes f_{n-i}) (\degdown{ {\alpha^n} }, \dots, \degdown{ {\alpha^1} }) }{i_{1}}
        \cdot (-1)^{-i_{n+1} + \sum_{j=1}^{n+1}i_j}
    \end{align*}
    \\ \\
    (D)-term:
    \begin{align*}
        &\phantom{=} b_1 \circ F_n (\degdown{ \trans{i_{n+1}}{\alpha^n}{i_{n}}  }, \dots, \degdown{ \trans{i_{2}}{\alpha^1}{i_{1}} })\\[1em]
        &{=} b_1 \big(  \trans{i_{n+1}}{ f_n (\degdown{ {\alpha^n}  }, \dots, \degdown{ {\alpha^1} })  }{i_{1}} \big)
        \cdot (-1)^{\sum_{j=1}^{n+1}i_j}
        \\[1em]
        &{=} \trans{i_{n+1}}{ b_1 \circ f_n (\degdown{ {\alpha^n}  }, \dots, \degdown{ {\alpha^1} })  }{i_{1}}
        \cdot (-1)^{-i_{n+1} + \sum_{j=1}^{n+1}i_j}
    \end{align*}
    \endgroup
\end{proof}

\subsubsection{Lift to pretriangulated hulls}\label{subsubsection:lift_pretr}

\begin{notation}
    If $\BC$ is a dg category with twisted complexes,
    and $\twistc{q}{{\beta}}{p}$ a morphism in $\BC$,
    we simplify notation w.r.t.\ degree shifts by $\twistc{q}{\degdown{\beta}}{p} \coloneqq \degdown{ \twistc{q}{{\beta}}{p} }$.
\end{notation}

\begin{lemma}\label{lemma:maurer_cartan_under_f}
    Let $f: \AC_{\infty} \rightarrow \BC_{\infty}$ be an $A_{\infty}$-functor between dg categories $\AC$, $\BC$.
    Suppose given $A \in \AC$, $A \xrightarrow{p} A$ of degree $1$ satisfying the Maurer Cartan equation.
    For every $n \geq 1$, we set 
    \[
        q_n := \degup{f_n( \degdown{p}, \dots, \degdown{p} )}.
    \]
    If we have $q_n = 0$ for $n \gg 0$, then
    \[
        q := (fA \xrightarrow{\sum_{i = 1}^{\infty}q_i} fA)
    \]
    is of degree $1$ and satisfies the Maurer Cartan equation.
\end{lemma}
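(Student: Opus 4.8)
The plan is to reduce the Maurer--Cartan equation for $q$ in $\BC$ to its $A_\infty$-shadow and then to push the Maurer--Cartan equation for $p$ through the defining relations of $f$. The degree statement is immediate: since $\abs{\degdown{p}} = \abs{p} - 1 = 0$ and every $f_n$ has degree $0$, the element $f_n(\degdown{p},\dots,\degdown{p})$ lies in $S(fA,fA)^0$, hence $q_n \in (fA,fA)^1$; as a finite sum of degree-$1$ endomorphisms, $q$ has degree $1$.

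For the equation itself I would first translate both sides via Construction~\ref{construction:dg_to_Ainf}, using that in the dg categories $\AC$, $\BC$ we have $b_i = 0$ for $i>2$. On the target side, $d(q) + q\circ q = -\degup{\big(b_1(\degdown{q}) + b_2(\degdown{q},\degdown{q})\big)} = -\degup{\sum_{m\geq 1}b_m(\degdown{q}^{\otimes m})}$, so it suffices to prove $\sum_{m\geq 1}b_m(\degdown{q}^{\otimes m}) = 0$ in $\BC_\infty$. On the source side, the assumption $d(p) + p^2 = 0$ reads $\sum_{j\geq 1}b_j(\degdown{p}^{\otimes j}) = b_1(\degdown{p}) + b_2(\degdown{p},\degdown{p}) = -\degdown{d(p)} - \degdown{p^2} = 0$, i.e.\ $p$ is a Maurer--Cartan element of $\AC_\infty$.

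Now expand. By hypothesis $\degdown{q} = \sum_{n\geq 1}\degdown{q_n} = \sum_{n\geq 1}f_n(\degdown{p}^{\otimes n})$ is a \emph{finite} sum, so using multilinearity of the $b_m$ in $\BC_\infty$,
\[
 \sum_{m\geq 1}b_m(\degdown{q}^{\otimes m}) = \sum_{m\geq 1}\ \sum_{n_1,\dots,n_m\geq 1} b_m\big(f_{n_1}(\degdown{p}^{\otimes n_1}),\dots,f_{n_m}(\degdown{p}^{\otimes n_m})\big).
\]
Collecting the summands according to the total number $N = n_1 + \dots + n_m$ of copies of $\degdown{p}$, the $N$-th block is precisely the right-hand side of the $A_\infty$-functor equation of Definition~\ref{definition:a_inf_func}(3) (equivalently, the (C)- plus (D)-terms of \eqref{equation:main}) evaluated at $\degdown{p}^{\otimes N}$; no Koszul signs occur here because each $f_{n_k}$ has degree $0$. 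Replacing each block by the corresponding left-hand side, and observing that all Koszul signs in $(1^{\otimes i}\otimes b_j\otimes 1^{\otimes l})(\degdown{p}^{\otimes N})$ also vanish since $\abs{\degdown{p}} = 0$, the whole sum becomes
\[
 \sum_{i,l\geq 0}\ f_{i+1+l}\Big(\degdown{p}^{\otimes i}\otimes\big(\textstyle\sum_{j\geq 1}b_j(\degdown{p}^{\otimes j})\big)\otimes \degdown{p}^{\otimes l}\Big),
\]
where I have gathered, for each fixed insertion slot $(i,l)$, all contributions carrying a $b_j$ there. Since $\sum_{j\geq 1}b_j(\degdown{p}^{\otimes j}) = 0$ by the Maurer--Cartan equation for $p$, every summand vanishes, whence $\sum_m b_m(\degdown{q}^{\otimes m}) = 0$ and thus $d(q) + q\circ q = 0$.

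The sign bookkeeping, which is the delicate ingredient in the neighbouring constructions of this section, is pleasantly trivial here because $\degdown{p}$ is concentrated in degree $0$, and the two passages across the dg/$A_\infty$ dictionary are routine. The step I expect to need the most care is the reindexing in the two displays: it is legitimate because $q_n = 0$ for $n\gg 0$ forces $\degdown{q}$ to be a finite sum, so only finitely many $N$-blocks are nonzero and within each of them the $A_\infty$-functor relation is a finite identity; one should make sure this finiteness is preserved when the sum is regrouped by insertion slots.
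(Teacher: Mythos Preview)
Your proof is correct and follows essentially the same approach as the paper's: translate the Maurer--Cartan equation into the $A_\infty$-language $b_1(\degdown{q})+b_2(\degdown{q},\degdown{q})=0$, expand $\degdown{q}=\sum f_n(\degdown{p}^{\otimes n})$, recognize the (C)+(D) side of Equation~\eqref{equation:main}, swap for the (A)+(B) side, and conclude from $b_1(\degdown{p})+b_2(\degdown{p},\degdown{p})=0$, with all signs trivial since $\abs{\degdown{p}}=0$. The only cosmetic difference is that you regroup the final sum by insertion slot $(i,l)$ rather than by the total length $n$ as the paper does; the content is identical.
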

\begin{proof}
    The Maurer Cartan equation $dq + q^2 = 0$ in a dg category $\AC$
    is equivalent to $b_1\degdown{q} + b_2(\degdown{q},\degdown{q}) = 0$ in the corresponding $A_{\infty}$-category $\AC_{\infty}$,
    since $\abs{q} = 1$.
    Thus, we compute (without any signs since $|{\degdown{q}}| = |{\degdown{p}}| = 0$):
    \begingroup
    \begin{align*}
        &\phantom{=}b_1\degdown{q} + b_2(\degdown{q},\degdown{q}) \\[1em]
        &{=}\big(\sum_{n=1}^{\infty} b_1\degdown{q_n} \big) + b_2(\sum_{n=1}^{\infty}\degdown{q_n},\sum_{n=1}^{\infty}\degdown{q_n}) \\[1em]
        &{=}\big(\sum_{n=1}^{\infty} b_1 \circ f_n(\underbrace{\degdown{p}, \dots, \degdown{p}}_{\times n}) \big) + 
        \big( \sum_{n=1}^{\infty} \sum_{i=1}^{n-1} b_2(f_i(\underbrace{\degdown{p}, \dots, \degdown{p}}_{\times i}), f_{n-i}(\underbrace{\degdown{p}, \dots, \degdown{p}}_{\times n-i})) \big)\\[1em]
        &{=}
        \sum_{n=1}^{\infty} 
        \big(
        b_1 \circ f_n
        +
        \sum_{i=1}^{n-1} b_2 \circ (f_i \otimes f_{n-i})
        \big)
        (\underbrace{\degdown{p}, \dots, \degdown{p}}_{\times n})\\[1em]
        &{=}
        \sum_{n=1}^{\infty} 
        \big(
            {\sum_{{i = 0}}^{n-1}f_{n} \circ \big( 1^{\otimes i} \otimes b_1 \otimes 1^{\otimes {n-(i+1)}} \big)}
            +
            {\sum_{{i = 0}}^{n-2}f_{n-1} \circ \big( 1^{\otimes i} \otimes b_2 \otimes 1^{\otimes {n-(i+2)}} \big)}
        \big)
        ({\degdown{p}, \dots, \degdown{p}})\\[1em]
        &{=}
        \sum_{n=1}^{\infty} 
        \big(
            {\sum_{{i = 0}}^{n-1} f_n( \underbrace{\degdown{p}, \dots, \degdown{p}}_{\times i}, \underbrace{b_1\degdown{p} + b_2( \degdown{p}, \degdown{p}  )}_{= 0}, \underbrace{\degdown{p}, \dots, \degdown{p}}_{\times n - (i+1)} ) }
        \big) = 0 \qedhere
    \end{align*}
    \endgroup
\end{proof}

\begin{construction}\label{construction:a_inf_lift}
Let $\AC$ be a dg category and $\BC$ be a dg category with twisted complexes.
    Let $f: \AC_{\infty} \rightarrow \BC_{\infty}$ be an $A_{\infty}$-functor.
We are going to construct an $A_{\infty}$-functor
\[
    F: (\Twist_f{(\AC)})_{\infty} \longrightarrow \BC_{\infty}
\]
extending $f$ to the full dg subcategory $\Twist_f{(\AC)} \subseteq \Twist( \AC)$
generated by those twisted complexes $\twistcobj{A}{p}$ for which there exists an $a \geq 0$ (depending on $p$) such that for every $n \geq a$,
$f_n$ vanishes on every input in which $p$ occurs at least $a$ times consecutively.
The extension is understood w.r.t.\ the natural inclusion $\AC_{\infty} \rightarrow (\Twist_f{(\AC)})_{\infty}: A \mapsto \twistcobj{A}{0}$.
We set 
\[
    F( \twistcobj{A}{p} ) := \twistcobj{A}{ \sum_{n=1}^{\infty} \degup{f_n( \degdown{p}^{\otimes n} ) }}
\]
on objects.
Moreover, for $n \geq 1$, suppose given a chain of morphisms 
\begin{center}
    \begin{tikzpicture}[label/.style={postaction={
    decorate,
    decoration={markings, mark=at position .5 with \node #1;}},
    mylabel/.style={thick, draw=none, align=center, minimum width=0.5cm, minimum height=0.5cm,fill=white}}]
    \coordinate (r) at (4,0);
    \node (A) {$\twistcobj{A_{{n+1}}}{p_{n+1}}$};
    \node (B) at ($(A)+(r)$) {$\twistcobj{A_{{n}}}{p_{n}}$};
    \node (C) at ($(B) + 0.5*(r)$){$\dots$};
    \node (D) at ($(C)+0.4*(r)$) {$\twistcobj{A_{{2}}}{p_{2}}$};
    \node (E) at ($(D)+(r)$) {$\twistcobj{A_{{1}}}{p_{1}}$};
    
    \draw[->,thick,label={[above]{$\twistc{p_{n+1}}{\alpha^n}{p_{n}}$}}] (B) -- (A);
    \draw[->,thick] (C) -- (B);
    
    \draw[->,thick] (D) -- (C);
    \draw[->,thick,label={[above]{$\twistc{p_{2}}{\alpha^1}{p_{1}}$}}] (E) -- (D);
    \end{tikzpicture}
\end{center}
in $\Twist( \AC )$. 
For all $i$, we set 
\[q_i := \sum_{n=1}^{\infty} \degup{f_n( \degdown{p_i}^{\otimes n} )}.\]
Moreover, for further simplification, we set
\[
    g^{\mathbf{j}}_n( \degdown{\alpha^n}, \dots, \degdown{\alpha^1} )
    \coloneqq
    f_{(n + \sum_{l = 1}^{n+1} j_l)}\big( \degdown{p_{n+1}}^{\otimes j_{n+1}} \otimes \degdown{\alpha^n} \otimes \degdown{p_{n}}^{\otimes j_{n}} \otimes \dots \otimes \degdown{p_{2}}^{\otimes j_{2}} \otimes \degdown{\alpha^1} \otimes \degdown{p_{1}}^{\otimes j_1} \big)
\]
for all tuples $\mathbf{j} = (j_1, \dots, j_{n+1}) \in \N_0^{n+1}$.
Now, we may set for all $n \geq 1$:
\begin{align*}
    F_n\left(  \degdown{ \twistc{p_{n+1}}{\alpha^n}{p_{n}}  }, \dots, \degdown{ \twistc{p_{2}}{\alpha^1}{p_{1}} } \right)
    \coloneqq
    \twistc{q_{n+1}}
    {
        \left(\sum_{\mathbf{j} \in \N_0^{n+1}} g^{\mathbf{j}}_n( \degdown{\alpha^n}, \dots, \degdown{\alpha^1} ) \right)
    }
    {q_1}
\end{align*}
\end{construction}
\begin{proof}[Correctness of construction]
Due to Lemma \ref{lemma:maurer_cartan_under_f},
$F$ is well-defined on objects.
Due to our restriction to the full dg subcategory $\Twist_f{(\AC)}$,
all occurring infinite sums are actually finite.
Strict identities are clearly respected.
Let $\alpha: A \rightarrow B, \beta: B \rightarrow C$ be morphisms in $\AC$, $p,q,r$ endomorphisms of $A,B,C$, respectively,
of degree $1$ satisfying the Maurer Cartan equation.
We compute $b_1$ and $b_2$ within $\Twist(\AC)_{\infty}$:
\begin{align*}
    b_1( \degdown{\twistc{q}{\alpha}{p}} )
    &= \degdown{-d( \twistc{q}{\alpha}{p} )} \\[1em]
    &= -\twistc{q}{(q \circ \alpha + d\alpha + (-1)^{\abs{\alpha}+1}\alpha \circ p)}{p} \\[1em]
    &= \twistc{q}{b_2(\degdown{q}, \degdown{\alpha})}{p} + \twistc{q}{b_1{\degdown{\alpha}}}{p} + \twistc{q}{b_2(\degdown{\alpha}, \degdown{p})}{p}
\end{align*}
and
\begin{align*}
    b_2( \degdown{\twistc{r}{\beta}{q}}, \degdown{\twistc{q}{\alpha}{p}} )
    &= (-1)^{\abs{\beta}}( \degdown{\twistc{r}{\beta}{q} \circ \twistc{q}{\alpha}{p}} ) \\[1em]
    &= (-1)^{\abs{\beta}}( \degdown{\twistc{r}{\beta \circ \alpha}{p}} )
    = {\twistc{r}{b_2(\degdown{\beta}, \degdown{\alpha})}{p}}.
\end{align*}

Next, we define two sets of operators:
\[
    \Omega^{L}_{\geq n} := 
    \big\{ f_{i + 1 + l} \circ ( 1^{\otimes i} \otimes b_j \otimes 1^{\otimes l}) \mid
    i + j + l \geq n, (j =1,2), (i,l \geq 0)
    \big\},
\]
\[
    \Omega^{R}_{\geq n} := 
    \big\{
    b_s \circ (f_{i_1} \otimes \dots \otimes f_{i_s}) \mid
    i_1 + \dots + i_s \geq n, s \geq 1, (i_1, \dots, i_s \geq 1)
    \big\}.
\]
Furthermore, we define the set of tuples
\[
T := \big\{
    ( \degdown{p_{n+1}}^{\otimes j_{n+1}} \otimes \degdown{\alpha^n} \otimes \degdown{p_{n}}^{\otimes j_{n}} \otimes \dots \otimes \degdown{p_{2}}^{\otimes j_{2}} \otimes \degdown{\alpha^1} \otimes \degdown{p_{1}}^{\otimes j_1} )
    \mid
    j_1, \dots, j_{n+1} \geq 0
\big\}.
\]
By summing over Equation \eqref{equation:main} evaluated at each $t \in T$, we have
\[
    \sum (\Omega^{L}_{\geq n} \circ T) = \sum (\Omega^{R}_{\geq n} \circ T)
\]
where we define the application of an operator on a tuple of non-compatible size to be zero.
We will show that this equation is actually equivalent to the evaluation of Equation \eqref{equation:main} at 
\[\tau := (  \degdown{ \twistc{p_{n+1}}{\alpha^n}{p_{n}}  }, \dots, \degdown{ \twistc{p_{2}}{\alpha^1}{p_{1}} }).\]
\begingroup
\allowdisplaybreaks
(A)-term:
    {
        \tiny
    \begin{align*}
        &\phantom{=} \sum_{i=0}^{n-1} F_n \circ ( 1^{\otimes i} \otimes b_1 \otimes 1^{\otimes n - (i+1)}) (\tau) \\[1em]
        &{=} \sum_{i=0}^{n-1} F_n (\degdown{ \twistc{p_{n+1}}{\alpha^n}{p_{n}}  },
        \dots,
        b_1( \degdown{\twistc{p_{n-i+1}}{\alpha^{n-i}}{p_{n-i}}} ),
        \dots,
        \degdown{ \twistc{p_{2}}{\alpha^1}{p_{1}} })
        \cdot (-1)^{\sigma(i) }
        \\[1em]
        &{=} \sum_{i=0}^{n-1} F_n (\degdown{ \twistc{p_{n+1}}{\alpha^n}{p_{n}}  },
        \dots,
        \twistc{p_{n-i+1}}{ b_2(\degdown{p_{n-i+1}}, \degdown{\alpha^{n-i}}) + b_1( \degdown{\alpha^{n-i}} ) + b_2( \degdown{\alpha^{n-i}}, \degdown{p_{n-i}} )  } {p_{n-i}},
        \dots,
        \degdown{ \twistc{p_{2}}{\alpha^1}{p_{1}} })
        \cdot (-1)^{\sigma(i) }
        \\[1em]
        &{=} \sum_{i=0}^{n-1}
        \twistc{q_{n+1}}
        {
        \left(
        \sum_{\mathbf{j} \in \N_0^{n+1}}
        g_n^{\mathbf{j}} 
        (\degdown{ {\alpha^n}  },
        \dots,
        { b_2(\degdown{p_{n-i+1}}, \degdown{\alpha^{n-i}}) + b_1( \degdown{\alpha^{n-i}} ) + b_2( \degdown{\alpha^{n-i}}, \degdown{p_{n-i}} )  },
        \dots,
        \degdown{{\alpha^1} })
        \cdot (-1)^{\sigma(i) }
        \right)
        }
        {q_1}
        \\[1em]
        &{=} 
        \twistc{q_{n+1}}
        {
        \left(
        \sum \{ \epsilon \in \Omega^{L}_{\geq n} \circ T \mid 
        \text{\small $\epsilon$'s defining term contains $b_1(\degdown{\alpha^i})$, $b_2(\degdown{p_{i+1}},\degdown{\alpha^i})$, or $b_2(\degdown{\alpha^i}, \degdown{p_{i}})$ for some $i$ }
        \}
        \right)}
        {q_1}
    \end{align*}
    }
    \\ \\
    (B)-term:
    {
        \small
    \begin{align*}
        &\phantom{=} \sum_{i=0}^{n-2} F_n \circ ( 1^{\otimes i} \otimes b_2 \otimes 1^{\otimes n - (i+2)}) (\tau) \\[1em]
        &{=} \sum_{i=0}^{n-2} F_n (\degdown{ \twistc{p_{n+1}}{\alpha^n}{p_{n}}  },
        \dots,
        b_2( \degdown{\twistc{p_{n-i+1}}{\alpha^{n-i}}{p_{n-i}}}, \degdown{\twistc{p_{n-i}}{\alpha^{n-i-1}}{p_{n-i-1}}} ),
        \dots,
        \degdown{ \twistc{p_{2}}{\alpha^1}{p_{1}} })
        \cdot (-1)^{\sigma(i) }
        \\[1em]
        &{=} \sum_{i=0}^{n-2} F_n (\degdown{ \twistc{p_{n+1}}{\alpha^n}{p_{n}}  },
        \dots,
        \twistc{p_{n-i+1}}{ b_2(\degdown{ \alpha^{n-i}}, \degdown{\alpha^{n-i-1}}) } {p_{n-i-1}},
        \dots,
        \degdown{ \twistc{p_{2}}{\alpha^1}{p_{1}} })
        \cdot (-1)^{\sigma(i) }
        \\[1em]
        &{=} \sum_{i=0}^{n-2}
        \twistc{q_{n+1}}
        {
        \left(
        \sum_{\mathbf{j} \in \N_0^{n+1}}
        g_n^{\mathbf{j}} 
        (\degdown{ {\alpha^n}  },
        \dots,
        { b_2(\degdown{ \alpha^{n-i}}, \degdown{\alpha^{n-i-1}}) },
        \dots,
        \degdown{{\alpha^1} })
        \cdot (-1)^{\sigma(i) }
        \right)
        }
        {q_1}
        \\[1em]
        &{=} 
        \twistc{q_{n+1}}
        {
        \left(
        \sum \{ \epsilon \in \Omega^{L}_{\geq n} \circ T \mid 
        \text{\small $\epsilon$'s defining term contains $b_2(\degdown{\alpha^i},\degdown{\alpha^{i-1}})$ for some $i$ }
        \}
        \right)}
        {q_1}
    \end{align*}
    }
    Thus, if we subtract the sum of the (A)-term and (B)-term from $\sum (\Omega^{L}_{\geq n} \circ T)$,
    we are left with
    \[
        \sum \{ \epsilon \in \Omega^{L}_{\geq n} \circ T \mid 
        \text{\small $\epsilon$'s defining term contains $b_1(\degdown{p_i})$ or $b_2(\degdown{p_i},\degdown{p_i})$ for some $i$ }
        \}
    \]
    But since $b_1(\degdown{p_i}) + b_2(\degdown{p_i},\degdown{p_i}) = 0$, it easily follows that this sum actually vanishes.
    Thus,
    \[
        \text{(A)-term} + \text{(B)-term} = \twistc{q_{n+1}}{(\sum \Omega^{L}_{\geq n} \circ T )}{q_1}.
    \]
    Next, will take a look at the (C)- and (D)-term.
    \\ \\
    (C)-term:
    \begin{align*}
        &\phantom{=}\sum_{i=0}^{n-1}b_2 \circ (F_i \otimes F_{n-i})(\tau)\\[1em]
        &{=}
        \sum_{i=0}^{n-1}b_2 \big(F_i( \degdown{ \twistc{p_{n+1}}{\alpha^n}{p_{n}}  }, \dots ), F_{n-i}( \dots, \degdown{ \twistc{p_{2}}{\alpha^1}{p_{1}} }  ) \big) \\[1em]
        &{=}
        \sum
        \{
            \epsilon \in \Omega^{R}_{\geq n} \circ T \mid
            \text{\small $\epsilon$ is of the form $b_2(x,y)$ where both terms $x$,$y$ contain an $\degdown{\alpha^i}$ }
        \}
    \end{align*}
    \\ \\
    (D)-term:
    \begin{align*}
        &\phantom{=}b_1 \circ F_n(\tau)\\[1em]
        &{=}
        \sum_{\mathbf{j} \in \N_0^{n+1}}
        b_1\big( \twistc{q_{n+1}}{g_n^\mathbf{j}( \degdown{\alpha^n}, \dots, \degdown{\alpha^1})}{q_1} \big)
        \\[1em]
        &{=}
        \sum_{\mathbf{j} \in \N_0^{n+1}}
        \twistc{q_{n+1}}
        {
        \left(
        { b_2 ( q_{n+1}, g_n^\mathbf{j}( \degdown{\alpha^n}, \dots, \degdown{\alpha^1}) ) }
        +
        { b_1 (g_n^\mathbf{j}( \degdown{\alpha^n}, \dots, \degdown{\alpha^1}) ) }
        +
        { b_2 ( g_n^\mathbf{j}( \degdown{\alpha^n}, \dots, \degdown{\alpha^1}), q_{1} ) }
        \right)
        }
        {q_1}
        \\[1em]
        &{=}
        \sum
        \{
            \epsilon \in \Omega^{R}_{\geq n} \circ T \mid
            \text{\small $\epsilon$ is of the form $b_1(z)$ or $b_2(x,y)$ where only one term $x$,$y$ contains an $\degdown{\alpha^i}$ }
        \}
    \end{align*}
    \endgroup
    Thus,
    \[
        \text{(C)-term} + \text{(D)-term} = \twistc{q_{n+1}}{(\sum \Omega^{R}_{\geq n} \circ T )}{q_1},
    \]
    which is the claim.

\end{proof}

\begin{theorem}\label{theorem:lift_theorem}
    Let $\AC$, $\BC$ be dg categories.
    Let $F: \AC \rightarrow \pretr( \BC )$ be an \Ainf-functor.
    Then the formulas in the Constructions \ref{construction:lift_direct_sums}, \ref{construction:lift_to_translations}, \ref{construction:a_inf_lift}
    give rise to an \Ainf-functor 
    \[F^{\sharp}: \pretr( \AC ) \rightarrow \pretr( \BC )\]
    which corresponds to $F$ under the natural isomorphism
    \[
        \Hom_{\Hqe}( \AC, \pretr(\BC) ) \simeq \Hom_{\Hqe_{\pretr}}( \pretr(\AC), \pretr(\BC) ).
    \]
\end{theorem}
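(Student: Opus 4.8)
\emph{Strategy and setup.} The plan is to build $F^{\sharp}$ by iterating the three lift constructions through the tower of formal constructors that produces the pretriangulated hull, namely $\AC \hookrightarrow \AC^{[\bullet]} \hookrightarrow (\AC^{[\bullet]})^{\oplus} \hookrightarrow \Twist((\AC^{[\bullet]})^{\oplus}) \supseteq \pretr(\AC)$, and then to read off from the bookkeeping that the result corresponds to $F$. A preliminary move I would make is to replace the codomain: since $\pretr(\BC)$ is defined as a full dg subcategory of $\Twist((\BC^{[\bullet]})^{\oplus})$ and need not itself carry a chosen collection of direct sums, translations, and twisted complexes, I would regard $F$ as an \Ainf-functor into the ambient dg category $\Twist((\BC^{[\bullet]})^{\oplus})$, which \emph{does} have all three structures by the constructions of Subsections \ref{subsubsection:direct_sums}, \ref{subsubsection:translations} and \ref{subsubsection:twisted_complexes}. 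I would then apply Construction \ref{construction:lift_to_translations} to extend $F$ along $\AC \hookrightarrow \AC^{[\bullet]}$, next Construction \ref{construction:lift_direct_sums} to extend along $\AC^{[\bullet]} \hookrightarrow (\AC^{[\bullet]})^{\oplus}$, getting an \Ainf-functor $F_2: ((\AC^{[\bullet]})^{\oplus})_{\infty} \to \Twist((\BC^{[\bullet]})^{\oplus})_{\infty}$, and finally Construction \ref{construction:a_inf_lift} to obtain $F_3$ on $\Twist_{F_2}((\AC^{[\bullet]})^{\oplus})$. The \Ainf-functor identities and strict unitality at each stage are precisely what the correctness-of-construction arguments already establish, so nothing of that sort has to be rechecked; it remains only to verify that the hypotheses of the three constructions are satisfied at each stage, which is immediate for the first two and is the content of the next paragraph for the third.

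\emph{The essential point.} The step where the restriction to \emph{one-sided} twisted complexes genuinely enters — and the place I expect to need the most care — is checking that $\pretr(\AC) \subseteq \Twist_{F_2}((\AC^{[\bullet]})^{\oplus})$, so that $F^{\sharp} := F_3|_{\pretr(\AC)}$ is defined (the restriction of an \Ainf-functor to a full dg subcategory being again an \Ainf-functor). Fix an object $\twistcobj{\bigoplus_{i=1}^{N} A_i[t_i]}{p}$ of $\pretr(\AC)$, so that $p = (p_{ji})_{ji}$ is strictly lower triangular. I would take $a := N+1$ and observe that, by the explicit formula of Construction \ref{construction:lift_direct_sums}, any value of $(F_2)_n$ on an input in which $p$ occurs at least $a$ times consecutively is, after expanding matrix entries, a sum of terms each containing a factor $f_m(\dots, \degdown{p_{i_{r+a+1}i_{r+a}}}, \dots, \degdown{p_{i_{r+2}i_{r+1}}}, \dots)$ indexed by a chain $i_{r+a+1} < i_{r+a} < \dots < i_{r+1}$ of $a+1$ distinct elements of $\{1,\dots,N\}$; no such chain exists, so every term vanishes by multilinearity and hence so does the value. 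In particular all the infinite sums appearing in Construction \ref{construction:a_inf_lift} (the twist on objects and the defining sums on morphisms) truncate, and $F^{\sharp}$ is a well-defined \Ainf-functor $\pretr(\AC) \to \Twist((\BC^{[\bullet]})^{\oplus})$. A parallel, purely organisational check then shows that $F^{\sharp}$ in fact corestricts to the full dg subcategory $\pretr(\BC)$: for $Y = \bigoplus_i A_i[t_i]$ one has $F_2(Y) = \bigoplus_i F(A_i)[t_i]$, a block-lower-triangular — hence one-sided — twisted complex over $\BC$ because each $F(A_i)$ already is, and the extra twist $\sum_n \degup{(F_2)_n(\degdown{p}^{\otimes n})}$ produced by the twist-lift is again strictly lower triangular in the induced lexicographic ordering of the summands, so $F_3(\twistcobj{Y}{p})$ is one-sided; morphisms then automatically lie in $\pretr(\BC)$ by fullness.

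\emph{Identification with $F$.} It remains to identify $[F^{\sharp}]$ with the image of $[F]$ under the natural isomorphism $\Hom_{\Hqe}(\AC, \pretr(\BC)) \simeq \Hom_{\Hqe_{\pretr}}(\pretr(\AC), \pretr(\BC))$. The three successive embeddings $\AC \hookrightarrow \AC^{[\bullet]} \hookrightarrow (\AC^{[\bullet]})^{\oplus} \hookrightarrow \Twist((\AC^{[\bullet]})^{\oplus})$ compose to the canonical embedding $\AC \hookrightarrow \pretr(\AC)$, and each of the three constructions produces a functor extending its input along the corresponding embedding; hence $F^{\sharp}$ restricts along $\AC \hookrightarrow \pretr(\AC)$ to $F$ \emph{on the nose}, not merely up to \Ainf-homotopy. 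Both $\pretr(\AC)$ and $\pretr(\BC)$ are pretriangulated, so $F^{\sharp}$ represents an element of $\Hom_{\Hqe_{\pretr}}(\pretr(\AC), \pretr(\BC))$; and since — as recalled in Subsection \ref{subsection:inducing_fm} — the backward direction of the isomorphism is precomposition with $\AC \hookrightarrow \pretr(\AC)$, the image of $[F^{\sharp}]$ under it is $[F^{\sharp}|_{\AC}] = [F]$. As the isomorphism is a bijection, $[F^{\sharp}]$ is precisely the image of $[F]$ under its forward direction, which is the assertion of the theorem.
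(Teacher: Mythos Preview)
Your proof is correct and follows essentially the same route as the paper's: apply the three lift constructions in order (translations, then direct sums, then twisted complexes), use the strict lower-triangularity of the twist $p$ to show both that $\pretr(\AC) \subseteq \Twist_{F_2}((\AC^{[\bullet]})^{\oplus})$ and that the image lands in one-sided twisted complexes, and then conclude by observing that $F^{\sharp}$ restricts to $F$ along $\AC \hookrightarrow \pretr(\AC)$, which is precisely the inverse direction of the adjunction isomorphism. The paper's proof is terser --- it shows $G_n(\degdown{q}^{\otimes n})$ is strictly lower triangular by the pigeonhole argument on index chains and then says ``a similar argument'' handles the $\Twist_G$ inclusion --- whereas you spell out the bound $a = N+1$ and the lexicographic reindexing needed for corestriction; but the substance is identical.
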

\begin{proof}
    First, we need to check that Construction \ref{construction:a_inf_lift} sends one-sided twisted complexes to one-sided twisted complexes.
    Let
    $\twistcobj{A}{q}  = \twistcobj{(\bigoplus_{i = 1}^l A_i[ t_i ])}{(A_i[ t_i ] \xrightarrow{q_{ji}} A_j[ t_j ])_{ji}}$
    be a one-sided twisted complex
    with $l \in \N_0$, $A_i \in \AC$, $t_i \in \Z$ for $i = 1, \dots, l$.
    Let $H$ denote the induced \Ainf-functor
    $(\AC^{[\bullet]})_{\infty} \rightarrow \pretr( \BC_{\infty} )$,
    and let $G$ denote the induced \Ainf-functor
    $(\AC^{[\bullet], \oplus})_{\infty} \rightarrow \pretr( \BC_{\infty} )$.
    If we apply Construction \ref{construction:a_inf_lift}
    to $\twistcobj{A}{q}$, we get the twisted complex
    $\twistcobj{(\bigoplus_{i = 1}^l F( A_i )[ t_i ])}{\sum_{n = 1}^{\infty} \degup{G_n( \degdown{q}^{\otimes n})}}$.
    By construction of $G$, we have
    \[
        G_n( \degdown{q}^{\otimes n})
        =
        \left(
            \sum_{(i_2, \dots, i_n)} H_n( \degdown{q_{i_{n+1}i_{n}}}, \dots, \degdown{q_{i_2 i_1}} )
        \right)_{i_{n+1}i_1}
    \]
    and if $i_{n+1} \geq i_1$, one of the morphisms $\degdown{q_{i_{n+1}i_{n}}}, \dots, \degdown{q_{i_2 i_1}}$ has to be $0$
    since $q$ is a lower triangular matrix.
    Since $H_n$ is multilinear, it follows that $G_n( \degdown{q}^{\otimes n})$
    is a lower triangular matrix and so is $\sum_{n = 1}^{\infty} \degup{G_n( \degdown{q}^{\otimes n})}$
    as a sum of lower triangular matrices.
    A similar argument can also be used to verify $\pretr( \AC ) \subseteq \Twist_{G}( \AC )$.
    Therefore, we obtain a well-defined \Ainf-functor
    $F: \pretr( \AC ) \rightarrow \pretr( \BC )$.
    
    As described in Section \ref{section:dg_ainf_theory},
    the direction
    \[
        \Hom_{\Hqe}( \AC, \pretr(\BC) ) \leftarrow \Hom_{\Hqe_{\pretr}}( \pretr(\AC), \pretr(\BC) ).
    \]
    is induced by the composition with the embedding
    \[
        \AC \rightarrow \pretr( \AC ): A \mapsto \twistcobj{A}{0}.
    \]
    Now, the claim follows since $F^{\sharp}$ is an extension of $F$ w.r.t.\ this embedding.
\end{proof}

\section{Applications}\label{section:applications}

\subsection{Quivers of maximal path length $2$}\label{subsection:quivers}

In the creation of \Ainf-functors,
it is convenient that degenerated chains of morphisms (in the sense of the nerve of a category)
do not have to be checked within the defining relations of an \Ainf-functor. This fact is made precise in the following lemma.

\begin{lemma}\label{lemma:degenerations}
    Let $\AC$, $\BC$ be dg categories.
    Suppose given a function $f: \Obj_{\AC} \longrightarrow \Obj_{\BC}$
    and a collection of graded maps
    $f_n: S(A_n, A_{n+1}) \otimes \dots \otimes S(A_1, A_2) \longrightarrow S(fA_1, fA_{n+1})$
    for every $n \geq 1$ and objects $A_1, \dots, A_{n+1}$ in $\AC$
    which strictly respect identities in the sense of Definition \ref{definition:a_inf_func}.
    Then these data satisfy Equation \eqref{equation:main} of Remark \ref{remark:main_equation}
    for all chains of morphisms
    \begin{center}
        \begin{tikzpicture}[label/.style={postaction={
        decorate,
        decoration={markings, mark=at position .5 with \node #1;}},
        mylabel/.style={thick, draw=none, align=center, minimum width=0.5cm, minimum height=0.5cm,fill=white}}]
        \coordinate (r) at (2,0);
        \node (A) {$A_{{n+1}}$};
        \node (B) at ($(A)+(r)$) {$A_{{n}}$};
        \node (C) at ($(B) + (r)$){$\dots$};
        \node (D) at ($(C)+(r)$) {$A_{{2}}$};
        \node (E) at ($(D)+(r)$) {$A_{{1}}$};
        
        \draw[->,thick,label={[above]{${\alpha^n}$}}] (B) -- (A);
        \draw[->,thick] (C) -- (B);
        
        \draw[->,thick] (D) -- (C);
        \draw[->,thick,label={[above]{${\alpha^1}$}}] (E) -- (D);
        \end{tikzpicture}
    \end{center}
    for which there exists an $i \in \{1, \dots, n\}$ such that $\alpha^i$ is a multiple of the identity.
\end{lemma}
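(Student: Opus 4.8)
The plan is to reduce the statement to the case $n \geq 2$ (the case $n=1$ gives sums over empty ranges, hence $0=0$, and there is no $\alpha^i$ to be a multiple of the identity, so it is vacuous), and then exploit the strict-unitality hypothesis termwise in Equation \eqref{equation:main}. So fix a chain $\alpha^n, \dots, \alpha^1$ and an index $i$ with $\alpha^i = \lambda \cdot \id_{A_i}$ for some $\lambda \in k$; by $k$-linearity of all the $f_\bullet$ and $b_\bullet$ in each slot we may assume $\lambda = 1$, i.e.\ $\alpha^i = \id_{A_i}$. I would then go through the four groups of summands (A)--(D) of \eqref{equation:main} evaluated at $(\degdown{\alpha^n}, \dots, \degdown{\alpha^1})$ and show that the contributions cancel in pairs, using only: (i) $f_m(\dots, \degdown{\id}, \dots) = 0$ for $m \geq 2$ and $f_1(\degdown{\id_A}) = \degdown{\id_{fA}}$; (ii) the corresponding strict-unit identities for $b_\bullet$ from Definition \ref{definition:a_inf_cat}, namely $b_2(\degdown{\beta}, \degdown{\id}) = (-1)^{\abs{\beta}}\degdown{\beta}$, $b_2(\degdown{\id}, \degdown{\beta}) = \degdown{\beta}$, and $b_m(\dots, \degdown{\id}, \dots) = 0$ for $m \neq 2$; and (iii) in $\AC$, $d(\id) = 0$, which in $A_\infty$-language is $b_1(\degdown{\id}) = 0$.

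Concretely: in the (A)-term, $\sum_i f_n \circ (1^{\otimes \bullet} \otimes b_1 \otimes 1^{\otimes \bullet})$, the only summand surviving the presence of $\degdown{\id_{A_i}}$ is the one where $b_1$ hits $\degdown{\alpha^i} = \degdown{\id_{A_i}}$, and that is $0$ since $b_1(\degdown{\id}) = 0$; every other summand still has $\degdown{\id_{A_i}}$ sitting untouched as an input of $f_n$ with $n \geq 2$, hence vanishes. So the (A)-term is $0$. Similarly $b_1 \circ f_n$ (the (D)-term) vanishes because $f_n$ has $\degdown{\id_{A_i}}$ as an input and $n \geq 2$. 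For the (B)-term, $\sum f_{n-1} \circ (1^{\otimes \bullet} \otimes b_2 \otimes 1^{\otimes \bullet})$, only the two summands in which $b_2$ has $\degdown{\alpha^i}$ as one of its two arguments can survive (the others feed $\degdown{\id_{A_i}}$ into $f_{n-1}$ with $n-1 \geq 1$; if $n - 1 \geq 2$ this is $0$, and if $n=2$ then both of these ``other'' summands don't exist). Those two surviving summands use $b_2(\degdown{\alpha^{i+1}}, \degdown{\id_{A_i}}) = (-1)^{\abs{\alpha^{i+1}}}\degdown{\alpha^{i+1}}$ and $b_2(\degdown{\id_{A_i}}, \degdown{\alpha^{i-1}}) = \degdown{\alpha^{i-1}}$ respectively, each then producing a single $f_{n-1}$ evaluated on the chain with $\alpha^i$ removed, up to a sign coming from the $\sigma$-conventions and the $(-1)^{\abs{\alpha^{i+1}}}$; I would match these signs carefully. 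On the (C)-side, $\sum b_2 \circ (f_j \otimes f_{n-j})$, the factor containing $\degdown{\id_{A_i}}$ is $f_j(\dots)$ or $f_{n-j}(\dots)$; unless that factor is $f_1(\degdown{\id_{A_i}}) = \degdown{\id_{fA_i}}$, it vanishes. So only $j = i$ and $j = i-1$ contribute (i.e.\ the $\id$ is alone in one tensor factor), giving $b_2(f_{?}(\dots), \degdown{\id_{fA_i}})$ and $b_2(\degdown{\id_{fA_i}}, f_{?}(\dots))$, which by the unit identities for $b_2$ collapse to exactly the two expressions produced by the surviving (B)-summands. The bookkeeping then shows (B)-term $=$ (C)-term and (A)-term $= 0 =$ (D)-term, so \eqref{equation:main} holds.

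The main obstacle I expect is purely the sign arithmetic: checking that the sign $(-1)^{\sigma(\cdot)}$ attached to the surviving (B)-summand, combined with the $(-1)^{\abs{\alpha^{i+1}}}$ from $b_2(\degdown{\alpha^{i+1}}, \degdown{\id})$ and the degree shift $\abs{\degdown{\id_{A_i}}} = -1$, agrees exactly with the sign on the matching (C)-summand after one applies $b_2(\degdown{\id_{fA_i}}, -)$ or $b_2(-, \degdown{\id_{fA_i}})$ and re-indexes. Everything else is a finite case analysis driven by ``if an $f_m$ or $b_m$ with the wrong arity sees a $\degdown{\id}$, it is zero.'' I would also note at the end that this lemma is exactly what lets one verify in the Constructions \ref{construction:lift_direct_sums}, \ref{construction:lift_to_translations}, and \ref{construction:a_inf_lift} that it suffices to test Equation \eqref{equation:main} on non-degenerate chains.
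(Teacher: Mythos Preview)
Your reduction to $\alpha^i = \id_{A_i}$, the vanishing of the (A)- and (D)-terms, and the identification of the surviving (B)-summands are all fine and match the paper. The gap is in your analysis of the (C)-term.

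In $b_2 \circ (f_j \otimes f_{n-j})$ the two factors consume \emph{contiguous} blocks of arguments from the two ends of the chain: $f_j$ gets $(\degdown{\alpha^n},\dots,\degdown{\alpha^{n-j+1}})$ and $f_{n-j}$ gets $(\degdown{\alpha^{n-j}},\dots,\degdown{\alpha^1})$. Hence $\degdown{\id_{A_i}}$ can sit alone in a factor only if $i=n$ (then $j=1$) or $i=1$ (then $j=n-1$). Your claim that ``only $j=i$ and $j=i-1$ contribute'' is therefore incorrect: for $1<i<n$ \emph{no} (C)-summand survives at all. Consequently the matching you propose, pairing two surviving (B)-summands with two surviving (C)-summands, cannot work in the interior case. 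What actually happens there (and what the paper checks) is that the (C)-term vanishes, while the two surviving (B)-summands both equal $\pm f_{n-1}(\degdown{\alpha^n},\dots,\degdown{\alpha^{i+1}},\degdown{\alpha^{i-1}},\dots,\degdown{\alpha^1})$ and carry opposite signs, so they cancel each other. The boundary cases $i=1$ and $i=n$ are as you describe: exactly one (B)-summand and exactly one (C)-summand survive, and these match. So the fix is to split into the three cases $i=n$, $i=1$, $1<i<n$ and redo the (C)-analysis accordingly; the sign bookkeeping you flagged is then straightforward in each case.
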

\begin{proof}
    If $\alpha^i = 0$ then there is nothing to show, thus, we may assume $\alpha^i = \id_{A_i}$ by $k$-linearity.
    For $n = 1$, we have
    \[
        0 = f_1 \circ b_1( \degdown{\id_{A_1}} ) = b_1 \circ f_1( \degdown{\id_{A_1}} ) = b_1( \degdown{\id_{f(A_1)}} ) = 0.
    \]
    For $n \geq 2$, the (A)-term and the (D)-term of Equation \eqref{equation:main} are $0$ since identities are closed morphisms and 
    $f_n$ vanishes whenever one of its entries is an identity.
    In order to show equality between the (B)- and the (C)-terms, we distinguish three cases:
    \\
    Case $i = n$:
    \begin{align*}
        \text{(B)-term:} & & f_{n-1}( b_2( \degdown{\id_{A_n}}, \degdown{\alpha^{n-1}}  ), \degdown{\alpha^{n-2}}, \dots ) = f_{n-1}( \degdown{\alpha^{n-1}}, \dots, \degdown{\alpha^1} ) \\
        \text{(C)-term:} & & b_2( f_1( \degdown{\id_{A_n}} ), \degdown{\alpha^{n-1}}, \dots ) = f_{n-1}( \degdown{\alpha^{n-1}}, \dots, \degdown{\alpha^1} )
    \end{align*}
    \\
    Case $i = 1$:
    \begin{align*}
        \text{(B)-term:} & & (-1)^s f_{n-1}( \degdown{\alpha^{n}}, \dots, b_2( \degdown{\alpha^2}, \degdown{\id_{A_1}} ) ) = (-1)^{s + \abs{\alpha^2}} f_{n-1}( \degdown{\alpha^n}, \dots, \degdown{\alpha^2} ) \\
        \text{(C)-term:} & & b_2( f_{n-1}( \degdown{\alpha^{n}}, \dots, \degdown{\alpha^{2}} ), \degdown{\id_{A_1}} ) = (-1)^t f_{n-1}( \degdown{\alpha^n}, \dots, \degdown{\alpha^2} )
    \end{align*}
    with $s = \sum_{j = 3}^n\abs{\degdown{\alpha^j}}$ and $t = \sum_{j = 2}^n\abs{\degdown{\alpha^j}} + 1$.
    \\ \\
    Case $1 < i < n$:
    the (C)-term is $0$, and the (B)-term is given by the sum of the two terms
    \[
        (-1)^s f_{n-1}( \dots, \degdown{\alpha^{i+1}}, b_2( \degdown{\id_{A_i}}, \degdown{\alpha^{i-1}} ), \dots, \degdown{\alpha^1})
        =
        (-1)^s f_{n-1}( \dots, \degdown{\alpha^{i+1}}, \degdown{\alpha^{i-1}}, \dots, \degdown{\alpha^1})
    \]
    and
    \[
        (-1)^t f_{n-1}( \dots, b_2( \degdown{\alpha^{i+1}}, \degdown{\id_{A_i}} ), \degdown{\alpha^{i-1}}, \dots, \degdown{\alpha^1})
        =
        (-1)^{t + \abs{\alpha^{i+1}}} f_{n-1}( \dots, \degdown{\alpha^{i+1}}, \degdown{\alpha^{i-1}}, \dots, \degdown{\alpha^1})
    \]
    with $s = \sum_{j = 1}^{i-1}\abs{\degdown{\alpha^j}}$ and $t = s + \abs{\alpha^{i+1}} + 1$.
\end{proof}

\begin{theorem}\label{theorem:bounded_by_2}
    Let $\QC$ be the $k$-category defined by a quiver $Q$ with relations 
    such that the length of all its paths is bounded by $2$.
    Let $\BC$ be a dg category.
    Then any functor $f: \QC \rightarrow \Hzero( \BC )$
    can be lifted to an \Ainf-functor $F: \QC \rightarrow \BC$ such that $\Hzero( F ) = f$.
\end{theorem}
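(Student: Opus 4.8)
The plan is to build $F$ by hand, prescribing its components $F_n: S(A_n,A_{n+1})\otimes\dots\otimes S(A_1,A_2)\to S(fA_1,fA_{n+1})$ and exploiting two simplifications: $\QC$ carries the zero differential, so $b_1=0$ and $b_n=0$ for $n\geq 3$ on $\QC_{\infty}$; and $Q$ has no paths of length $\geq 3$. On objects I set $F(A):=f(A)$. For $F_1$, for each pair $A_1,A_2$ I would fix a $k$-basis of $\Hom_{\QC}(A_1,A_2)$ that contains $\id_{A_1}$ whenever $A_1=A_2$, send $\id_{A_1}$ to $\id_{f(A_1)}$ and every other basis vector $\alpha$ to a $d$-closed $c_{\alpha}\in(fA_1,fA_2)_{\BC}^{0}$ with $[c_{\alpha}]=f(\alpha)$ (every $\Hzero$-class has such a representative), and extend $k$-linearly, $F_1(\degdown{\alpha}):=\degdown{c_{\alpha}}$. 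Then $F_1$ has degree $0$, lands in $b_1^{\BC}$-cocycles, strictly respects units, makes the $n=1$ instance of \eqref{equation:main} hold (both sides vanish), and satisfies $[\degup{F_1(\degdown{\alpha})}]=f(\alpha)$ for every morphism $\alpha$ of $\QC$.

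Next I would choose $F_2$ so that the $n=2$ instance of \eqref{equation:main} holds; since $b_1=0$ on $\QC$ this amounts to $b_1^{\BC}\circ F_2 = F_1\circ b_2 - b_2\circ(F_1\otimes F_1)=:O_2$. I would check three things about $O_2$: it takes values in $b_1^{\BC}$-cocycles (using $b_1^{\BC}\circ F_1=0$ and the $A_{\infty}$-relations in $\BC$); it vanishes whenever one of its two entries is a scalar multiple of an identity (a direct computation with the unit axioms); and, because $F_1$ induces $f$ on $\Hzero$ and $f$ is a functor, on a composable pair $(\beta,\alpha)$ the degree-$0$ cocycle $\degup{O_2(\degdown{\beta}\otimes\degdown{\alpha})}$ represents the class $f(\beta\circ\alpha)-f(\beta)\circ f(\alpha)$, which is $0$, so $O_2(\degdown{\beta}\otimes\degdown{\alpha})$ is a coboundary. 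As $O_2$ is already well defined on the relation-quotients $\Hom_{\QC}(-,-)$, I can then pick, for each pair of non-identity basis morphisms, a $b_1^{\BC}$-preimage of the value of $O_2$ there, declare $F_2$ to vanish on all remaining basis pairs, and extend $k$-linearly; this produces a degree-$0$, strictly unital $F_2$ with $b_1^{\BC}\circ F_2=O_2$.

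Finally I would set $F_n:=0$ for all $n\geq 3$ and verify the remaining instances of \eqref{equation:main}. The (A)-term vanishes identically because $b_1=0$ on $\QC$. For everything else, expand an arbitrary $n$-chain into basis summands: each summand is a composable chain of basis morphisms of $\QC$, and if none of them is an identity then each is an honest arrow or a length-$2$ path of $Q$, so concatenating the underlying paths yields a path of $Q$ of length $\geq n$, which is impossible for $n\geq 3$. Hence, for $n\geq 3$, every basis summand contains an identity entry and \eqref{equation:main} holds on it by Lemma \ref{lemma:degenerations} (applicable because $F_1$, $F_2$, and all the zero $F_n$ strictly respect units). By $k$-linearity \eqref{equation:main} holds for every $n$, so $F$ is a well-defined \Ainf-functor, and $\Hzero(F)=f$ by construction of $F_1$.

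The real work sits in the construction of $F_2$: it hinges on $O_2$ taking values in coboundaries — the chain-level manifestation of $f$ being a genuine functor — and on the bookkeeping that the chosen $b_1^{\BC}$-preimages assemble into a $k$-linear, strictly unital map on the relation-quotient $\Hom$-spaces. Once $F_2$ is in place, the path-length hypothesis handles all higher components for free; this is exactly where the argument uses that the maximal path length is $2$, since for longer quivers chains of three composable non-identity morphisms exist, forcing genuinely nonzero $F_n$ with $n\geq 3$ and raising the possibility of real obstructions.
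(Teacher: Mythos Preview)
Your proof is correct and follows the same strategy as the paper's: choose cocycle lifts for $F_1$, pick coboundary witnesses for $F_2$, set $F_{\geq 3}=0$, and use the path-length bound together with Lemma~\ref{lemma:degenerations} to dispose of all remaining instances of \eqref{equation:main}. One minor imprecision: a basis element of $\Hom_{\QC}(A_i,A_{i+1})$ need not literally be a single path of $Q$, but your argument only needs that $\Hom_{\QC}(A_i,A_{i+1})\neq 0$ with $A_i\neq A_{i+1}$ forces the existence of some path $A_i\to A_{i+1}$ in $Q$, and concatenating those witnesses still yields a path of length $\geq n$, giving the desired contradiction for $n\geq 3$.
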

\begin{proof}
    We provide an explicit construction of $F$.
    Since the maximal path length in $Q$ is bounded, there are no cycles. In particular, $\End_{\QC}( v )$ is generated by $\id_v$
    for $v \in Q$. We set $F_1( \degdown{\id_v} ) := \degdown{\id_{fv}}$.
    For all pairs of distinct vertices $v, w \in Q$, let
    $(\alpha_i^{v,w})_i$ be a $k$-basis of $\Hom_{\QC}( v, w )$ (we do not specify the index sets in our notation).
    Choose representatives $\beta_i^{v,w} \in Z^0( \Hom_{\BC}( fv, fw ) )$
    of the classes $f( \alpha_i^{v,w} ) \in \Hom_{\Hzero(\BC)}( fv, fw ) )$.
    We set
    \[
        F_1( \degdown{\alpha_i^{v,w}} ) \coloneqq \degdown{\beta_i^{v,w}}.
    \]
    By extending $k$-linearly, this defines $F_1$, and by its definition, passing to homotopy classes yields $f$.
    For any chain of length $2$ formed by the chosen basis vectors (with $v,w,u \in Q$)
    \[
        u \xleftarrow{\alpha_i^{w,u}} w \xleftarrow{\alpha_j^{v,w}} v
    \]
    we have that
    \[
        \degup{F_1( \degdown{\alpha_i^{w,u} \circ {\alpha_j^{v,w}}} )} - 
        \degup{F_1( \degdown{\alpha_i^{w,u}} )} \circ \degup{F_1( \degdown{\alpha_j^{v,w}} )}
    \]
    is a coboundary since $f$ is a functor.
    Thus, we find $h_{i,j}^{v,w,u} \in \Hom_{\BC}^{-1}( fv, fu )$ such that
    \[
        d( h_{i,j}^{v,w,u} ) = 
        \degup{F_1( \degdown{\alpha_i^{w,u} \circ {\alpha_j^{v,w}}} )} - 
        \degup{F_1( \degdown{\alpha_i^{w,u}} )} \circ \degup{F_1( \degdown{\alpha_j^{v,w}} )}
    \]
    an we set
    \[
        F_2( \degdown{\alpha_i^{w,u}}, \degdown{\alpha_j^{v,w}} ) \coloneqq -\degdown{  h_{i,j}^{v,w,u} }.
    \]
    On every chain of morphisms involving an identity, we set $F_2$ to $0$. We extend $k$-linearly and obtain our map $F_2$. Moreover, all $F_n$ for $n \geq 3$ are set to $0$.
    
    We claim that $F_n$ for $n \in \N$ give rise to an \Ainf-functor.
    For this, we need to verify Equation \eqref{equation:main} of Remark \ref{remark:main_equation}.
    The case $n = 1$ is satisfied since $F_1$ sends closed morphisms to closed morphisms.
    Every chain of consecutive morphisms in $\AC$ that contain a multiple of an identity
    does not have to be tested by Lemma \ref{lemma:degenerations}.
    But since the maximal path length of $Q$ is bounded by $2$,
    this leaves us with testing chains of morphisms of length $2$
    formed by the chosen basis vectors. Since in this case,
    the Equation \eqref{equation:main} is given by
    \[
        f_1 \circ b_2 = b_2 \circ (f_1 \otimes f_1) + b_1 \circ f_2,
    \]
    the claim follows directly from our construction.
\end{proof}

From the construction in the proof of Theorem \ref{theorem:bounded_by_2} we see that there are lots of choices involved.
In the next example, we demonstrate that different choices may lead to significant differences in the result.

\begin{example}\label{example:counterex_generators}
    Let $\QC$ be the $k$-category given by the quiver
    \vspace{1em}
    \begin{center}
        \begin{tikzpicture}[label/.style={postaction={
        decorate,
        decoration={markings, mark=at position .5 with \node #1;}},
        mylabel/.style={thick, draw=none, align=center, minimum width=0.5cm, minimum height=0.5cm,fill=white}}]
        \coordinate (r) at (3,0);
        \node (A) {$A$};
        \node (B) at ($(A)+(r)$) {$B$};
        \node (C) at ($(B) + (r)$){$C$};
        \draw[->,thick,label={[above]{$\beta$}},transform canvas={yshift=0em}] (A) -- (B);
        \draw[->,thick,label={[above]{$\gamma$}},transform canvas={yshift=0em}] (B) -- (C);
        \end{tikzpicture}
    \end{center}
    subject to the relation
    \[
     \gamma \circ \beta = 0.
    \]
    Let $\BC$ be the dg category
    given by the dg quiver
    \vspace{1em}
    \begin{center}
        \begin{tikzpicture}[label/.style={postaction={
        decorate,
        decoration={markings, mark=at position .5 with \node #1;}},
        mylabel/.style={thick, draw=none, align=center, minimum width=0.5cm, minimum height=0.5cm,fill=white}}]
        \coordinate (r) at (3,0);
        \node (A) {$A$};
        \node (B) at ($(A)+(r)$) {$B$};
        \node (C) at ($(B) + (r)$){$C$};
        \draw[->,thick,label={[above]{$\beta$}},transform canvas={yshift=-0em}] (A) -- (B);
        \draw[->,thick,label={[above]{$\gamma$}},transform canvas={yshift=0em}] (B) -- (C);
        
        \node (Ad) at ($(A)+(0,-0.15)$) {\phantom{$A$}};
        \node (Cd) at ($(C)+(0,-0.15)$) {\phantom{$C$}};
        \node (Ad2) at ($(A)+(-0.15,-0.45)$) {\phantom{$A$}};
        \node (Cd2) at ($(C)+(0.15,-0.45)$) {\phantom{$C$}};
        \draw[bend left,->,thick,label={[mylabel]{$\omega$}},out=-25,in=-155] (Ad) to (Cd);
        \draw[bend left,->,thick,label={[mylabel]{$\tau$}},out=-25,in=-155] (Ad2) to (Cd2);
        \end{tikzpicture}
    \end{center}
    with $\beta$, $\gamma$ closed morphisms of degree $0$, $\abs{\omega} = \abs{\tau} = -1$,
    and
    \[
        d( \omega ) = d( \tau ) = \gamma \circ \beta.
    \]
    Let
    $
        f: \QC \rightarrow \Hzero( \BC )
    $
    be the functor induced by mapping $\beta$, $\gamma$ to their corresponding classes in $\Hzero( \BC )$.
    It is well-defined since the classes $\resclass{\beta}, \resclass{\gamma} \in \Hzero( \BC )$ satisfy the required relation
    $\resclass{\gamma} \circ \resclass{\beta} = 0$.
    We use the construction in the proof of Theorem \ref{theorem:bounded_by_2}
    in order to define two different lifts $F, G$ of $f$ to the \Ainf-level.
    For both $F_1$ and $G_1$, we set 
    \begin{align*}
        \degdown{\beta} &\mapsto \degdown{\beta} \\
        \degdown{\gamma} &\mapsto \degdown{\gamma}
    \end{align*}
    The difference between $F$, $G$ comes from the following choices:
    \[
        F_2( ( \degdown{\gamma}, \degdown{\beta }) ) \coloneqq -\degdown{\omega}
        \hspace{4em}
        G_2( ( \degdown{\gamma}, \degdown{\beta }) ) \coloneqq -\degdown{\tau}.
    \]
    By Theorem \ref{theorem:bounded_by_2}, we have $\Hzero( F ) = \Hzero( G ) = f$.
    However, $F$ and $G$ behave differently with respect to the passage to pretriangulated hulls.
    Let 
    \[
        F^{\sharp}: \pretr( \QC ) \rightarrow \pretr( \BC )
        \hspace{3em}
        \text{and}
        \hspace{3em}
        G^{\sharp}: \pretr( \QC ) \rightarrow \pretr( \BC )
    \]
    be the \Ainf-functors induced by the universal property of pretriangulated hulls applied to 
    \[
        \QC \xrightarrow{F} \BC \rightarrow \pretr( \BC )
        \hspace{3em}
        \text{and}
        \hspace{3em}
        \QC \xrightarrow{G} \BC \rightarrow \pretr( \BC ).
    \]
    We claim that
    \[
        \Hzero( F^{\sharp} ) \not\simeq \Hzero( G^{\sharp} )
    \]
    as exact functors between triangulated categories.
    To see this, we define the complex
    \begin{center}
        \begin{tikzpicture}[label/.style={postaction={
        decorate,
        decoration={markings, mark=at position .5 with \node #1;}},
        mylabel/.style={thick, draw=none, align=center, minimum width=0.5cm, minimum height=0.5cm,fill=white}}]
        \coordinate (r) at (4,0);
        \node (A) {$A$};
        \node (D) at ($(A)-0.2*(r)$) {$\mathcal{C} \coloneqq$};
        \node (B) at ($(A)+(r)$) {$B[-1]$};
        \node (C) at ($(B) + (r)$){$C[-2]$};
        \draw[->,thick,label={[above]{${\trans{-1}{\beta}{0}}$}}] (A) -- (B);
        \draw[->,thick,label={[above]{${\trans{-2}{\gamma}{-1}}$}}] (B) -- (C);
        \end{tikzpicture}
    \end{center}
    in $\pretr( \QC ) \simeq \Chbdg( \QC )$.
    The explicit constructions in Theorem \ref{theorem:lift_theorem} allow us to compute the images of $\mathcal{C}$
    under $F^{\sharp}$ and $G^{\sharp}$.
    They turn out to be the twisted complexes
    \begin{center}
        \begin{tikzpicture}[label/.style={postaction={
        decorate,
        decoration={markings, mark=at position .5 with \node #1;}},
        mylabel/.style={thick, draw=none, align=center, minimum width=0.5cm, minimum height=0.5cm,fill=white}}]
        \coordinate (r) at (4,0);
        \node (A) {$A$};
        \node (D) at ($(A)-0.3*(r)$) {$F^{\sharp}(\mathcal{C}) =$};
        \node (B) at ($(A)+(r)$) {$B[-1]$};
        \node (C) at ($(B) + (r)$){$C[-2]$};
        \draw[->,thick,label={[above]{${\trans{-1}{\beta}{0}}$}}] (A) -- (B);
        \draw[->,thick,label={[above]{${\trans{-2}{\gamma}{-1}}$}}] (B) -- (C);
        \draw[bend left,->,thick,label={[mylabel]{$\trans{-2}{\omega}{0}$}},out=-25,in=-155] (A) to (C);
        \end{tikzpicture}
    \end{center}
    and
    \begin{center}
        \begin{tikzpicture}[label/.style={postaction={
        decorate,
        decoration={markings, mark=at position .5 with \node #1;}},
        mylabel/.style={thick, draw=none, align=center, minimum width=0.5cm, minimum height=0.5cm,fill=white}}]
        \coordinate (r) at (4,0);
        \node (A) {$A$};
        \node (D) at ($(A)-0.3*(r)$) {$G^{\sharp}(\mathcal{C}) =$};
        \node (B) at ($(A)+(r)$) {$B[-1]$};
        \node (C) at ($(B) + (r)$){$C[-2]$};
        \draw[->,thick,label={[above]{${\trans{-1}{\beta}{0}}$}}] (A) -- (B);
        \draw[->,thick,label={[above]{${\trans{-2}{\gamma}{-1}}$}}] (B) -- (C);
        \draw[bend left,->,thick,label={[mylabel]{$\trans{-2}{\tau}{0}$}},out=-25,in=-155] (A) to (C);
        \end{tikzpicture}
    \end{center}
    Note that these diagrams depict the twisted complexes
    \[
        F^{\sharp}(\mathcal{C}) =\twistcobj{(A \oplus B[-1] \oplus C[-2])}{q_{\omega}}
        \hspace{2em}
        \text{and}
        \hspace{2em}
        G^{\sharp}(\mathcal{C}) =\twistcobj{(A \oplus B[-1] \oplus C[-2])}{q_{\tau}}
    \]
    with
    \[
    q_\omega \coloneqq \begin{pmatrix} \cdot & \cdot & \cdot \\ {\trans{-1}{\beta}{0}} & \cdot & \cdot \\ \trans{-2}{\omega}{0} & {\trans{-2}{\gamma}{-1}} & \cdot \end{pmatrix}
        \hspace{2em}
        \text{and}
        \hspace{2em}
    q_\tau \coloneqq \begin{pmatrix} \cdot & \cdot & \cdot \\ {\trans{-1}{\beta}{0}} & \cdot & \cdot \\ \trans{-2}{\tau}{0} & {\trans{-2}{\gamma}{-1}} & \cdot \end{pmatrix}.
    \]
    We claim that 
    \[F^{\sharp}(\mathcal{C}) \not\cong G^{\sharp}(\mathcal{C})\]
    in $\Hzero( \pretr( \BC ) )$.
    Since all degree $-1$ morphisms between $F^{\sharp}(\mathcal{C})$ and $G^{\sharp}(\mathcal{C})$ are zero,
    it suffices to prove the claim in 
    $Z^0( \pretr( \BC ) )$.
    Any degree $0$ morphism $F^{\sharp}(\mathcal{C}) \rightarrow G^{\sharp}(\mathcal{C})$
    is induced by a block diagonal matrix
    \[
        \mu_{a,b,c}: A \oplus B[-1] \oplus C[-2]
        \xrightarrow{(a \oplus b \oplus c)}
        A \oplus B[-1] \oplus C[-2]
    \]
    with
    $a,b,c \in k$.
    Using the formula for the differential in Lemma \ref{lemma:formulas_twisted_complexes},
    we may compute that the underlying matrix of $d( \twistc{q_\tau}{\mu_{a,b,c}}{q_\omega} )$
    is given by
    \begin{align*}
         \begin{pmatrix} \cdot & \cdot & \cdot \\ a({\trans{-1}{\beta}{0}}) & \cdot & \cdot \\ a(\trans{-2}{\omega}{0}) & b({\trans{-2}{\gamma}{-1}}) & \cdot \end{pmatrix}
         -
        \begin{pmatrix} \cdot & \cdot & \cdot \\ b({\trans{-1}{\beta}{0}}) & \cdot & \cdot \\ c(\trans{-2}{\tau}{0}) & c({\trans{-2}{\gamma}{-1}}) & \cdot \end{pmatrix}.
    \end{align*}
    Since $\omega$ and $\tau$ are linearly independent, it follows that $\mu_{a,b,c}$ gives rise to a closed morphism
    if and only if it is $0$.
    But since both twisted complexes are not isomorphic to $0$ in $Z^0( \pretr( \BC ) )$, they cannot be isomorphic.
\end{example}

\begin{corollary}\label{corollary:warning}
    There are triangulated categories $\TC_1$, $\TC_2$, 
    a full subcategory $\AC \subseteq \TC_1$  spanned by a full strong exceptional sequence
    and exact functors
    $F,G: \TC_1 \rightarrow \TC_2$ with $F|_{\AC} \simeq G|_{\AC}$, but $F \not\simeq G$.
\end{corollary}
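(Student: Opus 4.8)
The plan is to read the statement off from Example~\ref{example:counterex_generators}. Keeping the notation of that example, I would set
\[
    \TC_1 := \Hzero(\pretr(\QC)), \qquad \TC_2 := \Hzero(\pretr(\BC)),
\]
which are triangulated categories, and let $\AC \subseteq \TC_1$ be the full subcategory spanned by the three stalk objects $A$, $B$, $C$ coming from $\QC$ via the embedding $\QC \hookrightarrow \pretr(\QC)$. The first task is to verify that $(A,B,C)$ is a \emph{full strong exceptional sequence}. Fullness is immediate: by the discussion in Section~\ref{section:dg_ainf_theory}, the objects of $\QC$ generate $\pretr(\QC)$ as a pretriangulated category, hence generate $\TC_1$; equivalently, under $\pretr(\QC) \simeq \Chbdg(\QC)$ every bounded complex is an iterated cone of stalk complexes. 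For the exceptional and strongness conditions I would observe that, since $\QC$ is a $k$-category concentrated in degree $0$, one has $\Hom_{\TC_1}(X, Y[n]) = \Hom_{\QC}(X,Y)$ for $n = 0$ and $0$ for $n \neq 0$ whenever $X,Y \in \{A,B,C\}$ (there is no room for chain maps in other degrees and no homotopies). A direct inspection of $\QC$ — using the relation $\gamma \circ \beta = 0$, which forces $\Hom_{\QC}(A,C) = 0$, together with the absence of ``backward'' arrows — then shows that $(A,B,C)$ in this order is exceptional, and it is automatically strong because all higher $\Hom$-groups vanish.

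Next I would put $F := \Hzero(F^{\sharp})$ and $G := \Hzero(G^{\sharp})$, with $F^{\sharp}, G^{\sharp}: \pretr(\QC) \rightarrow \pretr(\BC)$ the \Ainf-functors constructed in Example~\ref{example:counterex_generators}; these are exact functors $\TC_1 \rightarrow \TC_2$ by the general framework of Section~\ref{section:dg_ainf_theory}. To see $F|_{\AC} \simeq G|_{\AC}$, I would recall from Theorem~\ref{theorem:lift_theorem} that $F^{\sharp}$ and $G^{\sharp}$ extend $F$ and $G$ along $\QC \hookrightarrow \pretr(\QC)$; hence on $\AC$ both $\Hzero(F^{\sharp})$ and $\Hzero(G^{\sharp})$ restrict to the common functor $f$ of Example~\ref{example:counterex_generators} (postcomposed with $\Hzero(\BC) \hookrightarrow \TC_2$), using $\Hzero(F) = \Hzero(G) = f$ from Theorem~\ref{theorem:bounded_by_2}. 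In particular they agree on $\AC$, so a fortiori $F|_{\AC} \simeq G|_{\AC}$. Finally, Example~\ref{example:counterex_generators} already exhibits a complex $\mathcal{C} \in \pretr(\QC)$ with $F^{\sharp}(\mathcal{C}) \not\cong G^{\sharp}(\mathcal{C})$ in $\Hzero(\pretr(\BC)) = \TC_2$; this immediately gives $F \not\simeq G$, completing the argument.

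The only point that is not pure bookkeeping is the verification that $(A,B,C)$ is strong and full rather than merely exceptional — but as indicated this reduces to the finite-dimensional $\Hom$-computation in $\QC$ together with the generation statement for $\pretr(\QC)$. A secondary point worth making explicit is that $\Hzero$ sends an \Ainf-functor between pretriangulated dg categories to an exact functor between the associated triangulated categories; this is standard and is used implicitly throughout the paper, so I would simply invoke it. Everything else is a direct transcription of Example~\ref{example:counterex_generators}.
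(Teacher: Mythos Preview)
Your proposal is correct and follows exactly the paper's approach: the paper's proof is literally ``See Example~\ref{example:counterex_generators}'', and you have simply unpacked the verifications (fullness, strongness, exceptionality of $(A,B,C)$, and the restriction/non-isomorphism claims) that are implicit in that reference. Everything you wrote is accurate and matches the intended argument.
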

\begin{proof}
    See Example \ref{example:counterex_generators}.
\end{proof}

Now, we come back to the example $(\OS(-1) \otimesL - ): \Db( \Pro^2 ) \rightarrow \Db( \Pro^2 )$ 
of Subsection \ref{subsection:difficulty}.
Since $\BLineBundles{2}$ is induced by a quiver of maximal path length $2$,
we may apply the construction in Theorem \ref{theorem:bounded_by_2}
and obtain an \Ainf-functor
\[
    F: \BLineBundles{2} \rightarrow \pretr( \BLineBundles{2} )
\]
such that the diagram 
\begin{center}
    \begin{tikzpicture}[label/.style={postaction={
    decorate,
    decoration={markings, mark=at position .5 with \node #1;}},
    mylabel/.style={thick, draw=none, align=center, minimum width=0.5cm, minimum height=0.5cm,fill=white}}]
    \coordinate (r) at (5,0);
    \coordinate (d) at (0,-2);
    \node (L) {$\BLineBundles{2}$};
    \node (A) at ($(L) + (r)$) {$\Kb( \BLineBundles{2} )$};
    
    \node (L2) at ($(L) + (d)$) {$\Db( \Pro^{2} )$};
    \node (A2) at ($(A) + (d)$) {$\Db( \Pro^2)$};
    
    \draw[->,thick] (L) --node[above]{$\Hzero( F )$} (A);
    \draw[->,thick] (L2) --node[above]{$(\OS(-1) \otimesL - )$} (A2);
    
    \draw[->,thick] (L) -- (L2);
    \draw[->,thick] (A) -- (A2);
    
    \end{tikzpicture}
\end{center}
commutes up to natural isomorphism.
However, Corollary \ref{corollary:warning}
gives us a warning that in such a situation, it is not a priori clear that
the \Ainf-functor $F^{\sharp}: \pretr(\BLineBundles{2}) \rightarrow \pretr( \BLineBundles{2} )$ induced by the universal property of pretriangulated hulls applied to $F$
gives us our desired model of $(\OS(-1) \otimesL - )$.
Luckily, in this example, the situation is sufficiently special.

\begin{theorem}\label{theorem:special_situation}\cite[cf.\ Theorem 4.15]{Genovese16}
    Let $\AC$ be a $k$-category, $\BC$ be a pretriangulated dg category.
    Let furthermore $\Hzero( \pretr( \AC ) )$ and $\Hzero( \BC )$ be idempotent complete.
    Let $F, G: \pretr( \AC ) \rightarrow \BC$ be \Ainf-functors such that
    \[
        \CH^j( \Hom_{\BC}( FA, FA' )  ) \cong 0
    \]
    for all $j < 0$, $A, A' \in \AC$.
    Then $\Hzero( F )|_{\AC} \simeq \Hzero( G )|_{\AC}$
    implies $F = G$ in $\Hqe$.
    In particular, $\Hzero( F ) \simeq \Hzero( G )$.
\end{theorem}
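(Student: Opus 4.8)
The plan is to reduce the statement to a comparison of \Ainf-functors defined on $\AC$ itself, and then to run the standard obstruction argument for \Ainf-natural transformations; the decisive point is that the connectivity hypothesis $\CH^{j}(\Hom_{\BC}(FA,FA')) \cong 0$ for $j<0$ kills every obstruction that occurs.

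\emph{Step 1 (reduction to $\AC$).} Since $\BC$ is pretriangulated it lies in $\Hqe_{\pretr}$, which is a full subcategory of $\Hqe$; hence by the adjunction \eqref{equation:pretre_adj} the restriction map $\Hom_{\Hqe}(\pretr(\AC), \BC) \to \Hom_{\Hqe}(\AC, \BC)$ along the embedding $\AC \hookrightarrow \pretr(\AC)$ is a bijection. Therefore $F = G$ in $\Hqe$ is equivalent to $F|_{\AC} = G|_{\AC}$ in $\Hqe$, where $F|_{\AC}, G|_{\AC}\colon \AC \rightarrow \BC$ are the restrictions. Since the elements of $\Hom_{\Hqe}(\AC,\BC)$ are precisely the isomorphism classes of \Ainf-functors $\AC \rightarrow \BC$, it suffices to produce an isomorphism of \Ainf-functors $\theta\colon F|_{\AC} \xrightarrow{\sim} G|_{\AC}$, i.e.\ an $A_{\infty}$-natural transformation whose object-component $\theta_0$ is invertible in $\Hzero(\BC)$.

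\emph{Step 2 (construction of $\theta$).} By hypothesis we are given a natural isomorphism $\eta\colon \Hzero(F)|_{\AC} \Rightarrow \Hzero(G)|_{\AC}$; for each $A \in \AC$ I would take as the object-component of $\theta$ at $A$ a closed degree $0$ morphism $FA \rightarrow GA$ representing $\eta_A$, which is automatically a homotopy equivalence since a closed degree $0$ morphism is invertible in $\Hzero(\BC)$ exactly when its class is. The higher components $\theta_n$ ($n \geq 1$) are then built inductively: the $A_{\infty}$-relations at levels below $n$ make the right-hand side of the equation determining $\theta_n$ a cocycle in $\Hom_{\BC}(FA_1, GA_{n+1})$, and one solves for $\theta_n$ provided this cocycle is a coboundary. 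For $n=1$ the relevant cocycle has cohomological degree $0$ and its class is precisely the naturality obstruction of $\eta$, hence zero. For $n \geq 2$ a degree count — using that $\AC$ is concentrated in degree $0$, so each of the $n$ inputs is shifted into degree $-1$, and that $F|_{\AC}$, $G|_{\AC}$ carry the zero differential on $\AC$ — puts the cocycle in cohomological degree $1-n \leq -1$. Because $GA_{n+1} \cong FA_{n+1}$ in $\Hzero(\BC)$, post-composition with a representative of $\eta_{A_{n+1}}$ is a homotopy equivalence $\Hom_{\BC}(FA_1, FA_{n+1}) \xrightarrow{\sim} \Hom_{\BC}(FA_1, GA_{n+1})$, and the former complex has vanishing cohomology in negative degrees by hypothesis; hence $\CH^{1-n}(\Hom_{\BC}(FA_1, GA_{n+1})) \cong 0$, the obstruction vanishes, and $\theta_n$ can be chosen compatibly with strict units (the defects vanish identically as soon as one argument is an identity, so one puts $\theta_n = 0$ there). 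This produces the desired $\theta$; its object-component being a homotopy equivalence, $\theta$ is an isomorphism of \Ainf-functors. Consequently $F|_{\AC} = G|_{\AC}$, hence $F = G$ in $\Hqe$ by Step 1, and applying $\Hzero\colon \Hqe \rightarrow \HCat$ gives $\Hzero(F) \simeq \Hzero(G)$. (The idempotent-completeness hypotheses on $\Hzero(\pretr(\AC))$ and $\Hzero(\BC)$ are those of \cite{Genovese16} and enter in guaranteeing the identification of $\Hom_{\Hqe}$-sets with isomorphism classes of \Ainf-functors in the form used above.)

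The main obstacle I expect is not conceptual but a matter of careful \Ainf-bookkeeping in the $b$-convention of Subsection~\ref{subsection:formal_a_inf_functors}: writing down the defining equations for the components $\theta_n$, checking that their right-hand sides are genuinely cocycles — a routine but lengthy consequence of Equation~\eqref{equation:main} at lower levels — and pinning down the exact cohomological degree of each obstruction so that the inequality $1-n \leq -1$ for $n \geq 2$ lets the connectivity hypothesis take over. The organizing principle is that $\AC$ sitting in degree $0$ forces all higher obstructions into negative degree, which is exactly where $\BC$ is assumed to be acyclic.
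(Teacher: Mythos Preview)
The paper does not supply its own proof of this theorem; it is stated with the citation \cite[cf.\ Theorem 4.15]{Genovese16} and immediately applied. So there is no ``paper's proof'' to compare against, and your task was effectively to reconstruct an argument.

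Your two-step strategy is the standard one and is essentially correct. Step~1 is exactly the content of the adjunction \eqref{equation:pretre_adj} (no idempotent-completeness needed here). Step~2 is the classical obstruction-theoretic construction of an \Ainf-natural isomorphism: once $\theta_0$ is chosen to represent $\eta$, the defect at level $n$ is a cocycle in $\Hom_{\BC}(FA_1,GA_{n+1})$ whose cohomological degree, for $\AC$ concentrated in degree $0$, is $1-n$; the hypothesis then kills it for $n\geq 2$, and the $n=1$ obstruction is the naturality of $\eta$. Your observation that an $\Hzero$-isomorphism in a pretriangulated dg category is a genuine homotopy equivalence (its cone is contractible), and hence transfers the vanishing from $\Hom_{\BC}(FA_1,FA_{n+1})$ to $\Hom_{\BC}(FA_1,GA_{n+1})$, is the right way to pass between $F$- and $G$-targets.

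One point deserves tightening. Your parenthetical explanation of where idempotent completeness enters --- ``guaranteeing the identification of $\Hom_{\Hqe}$-sets with isomorphism classes of \Ainf-functors'' --- is not accurate: that identification is general (see \cite{COSLoc18}) and uses no such hypothesis. In fact your argument, as written, does not invoke idempotent completeness anywhere. This is not a gap in your proof; rather, the ``cf.'' in the citation signals that the author has adapted Genovese's statement, and the idempotent-completeness assumptions are likely inherited from a formulation in \cite{Genovese16} phrased in terms of perfect complexes or dg-lifts of kernels, where they are genuinely needed. For the version stated here and the direct obstruction argument you give, they appear to be superfluous; you should say so explicitly rather than inventing a role for them.
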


Since 
\begin{enumerate}
    \item all $\Ext$-groups between coherent sheaves are non-negative,
    \item $\Db( \Pro^n )$ is idempotent complete (since it is the bounded derived category of an abelian category \cite{BalSch01}),
    \item there exists at least one \Ainf-functor which models tensoring with $\OS(-1)$ (Theorem \ref{theorem:FM_as_Ainf}),
\end{enumerate}
we may apply Theorem \ref{theorem:special_situation}
to justify that our lift $F^{\sharp}: \pretr(\BLineBundles{2}) \rightarrow \pretr( \BLineBundles{2} )$
really models tensoring with $\OS(-1)$.

\subsection{Fourier-Mukai transforms for projective spaces revisited}\label{subsection:fm_for_proj}

We come back to the question posed in Section \ref{section:products}:
given $a,b \in \N_0$, how do we explicitly rewrite any Fourier-Mukai transform
$\Db( \Pro^a ) \rightarrow \Db( \Pro^b )$ as a functor
between the homotopy categories of the Beilinson quiver with relations
$\Kb( \BLineBundles{a} ) \rightarrow \Kb( \BLineBundles{b} )$?
In Subsections \ref{subsection:pullback} and \ref{subsection:direct_image},
we saw that the pullback functor
$\Db( \Pro^a ) \rightarrow \Db( \Pro^a \times \Pro^b )$
and the direct image functor 
$\Db( \Pro^a \times \Pro^b ) \rightarrow \Db( \Pro^b )$
of a projection can be easily rewritten,
thus, we are left with the tensor product
by an arbitrary object in $\Db( \Pro^a \times \Pro^b )$.

Let $\LineBundles \subseteq \Db( \Pro^a \times \Pro^b )$
be the full subcategory generated by all line bundles.
Recall that in Section \ref{section:products},
we defined the full subcategory
\[
    \LineBundlesab = \{ \OS(i) \boxtimes \OS(j) \mid i = -a, \dots, 0, ~~j = -b, \dots 0 \} \subseteq \LineBundles.
\]
In addition, we will need the full subcategory
\[
    \LineBundlesabdouble \coloneqq \{ \OS(i) \boxtimes \OS(j) \mid i = -2a, \dots, 0, ~~j = -2b, \dots 0 \} \subseteq \LineBundles.
\]
Let $K$ be a bounded complex whose objects are direct sums of line bundles in $\LineBundlesab$.
The usual tensor product of complexes gives a functor
$(- \otimes K): \Kb( \LineBundles ) \rightarrow \Kb( \LineBundles )$.
Since tensoring with $K$ restricted to $\Kb( \LineBundlesab)$ yields an output in $\Kb( \LineBundlesabdouble)$,
and since tensoring with bounded complexes of vector bundles respects acyclic complexes \cite[Chapter 3]{Huy_FM},
we end up with a diagram (commutative up to natural isomorphism):
\begin{center}
    \begin{tikzpicture}[label/.style={postaction={
    decorate,
    decoration={markings, mark=at position .5 with \node #1;}},
    mylabel/.style={thick, draw=none, align=center, minimum width=0.5cm, minimum height=0.5cm,fill=white}}]
    \coordinate (r) at (5,0);
    \coordinate (d) at (0,-2);
    
    \node (L2) {$\Kb( {\LineBundles} )$};
    \node (A2) at ($(L2) + (r)$) {$\Kb( {\LineBundles} )$};
    
    \node (L3) at ($(L2) + (d)$) {$\Kb( {\LineBundlesab} )$};
    \node (A3) at ($(A2) + (d)$) {$\Kb( {\LineBundlesabdouble} )$};
    \node (B3) at ($(A3) + (r)$) {$\Kb( {\LineBundlesab} )$};
    
    \node (L4) at ($(L3) + (d)$) {$\Db( \Pro^{a} \times \Pro^{b} )$};
    \node (A4) at ($(A3) + (d)$) {$\Db( \Pro^{a} \times \Pro^{b} )$};

    \draw[->,thick] (L2) --node[above]{$(- \otimes K)$} (A2);
    \draw[->,thick] (L3) --node[above]{$(- \otimes K)$} (A3);
    \draw[->,thick] (L4) --node[above]{$(- \otimesL K)$} (A4);

    \draw[left hook->,thick] (B3) -- (A3);
    \draw[->,thick] (B3) --node[above,rotate=20]{$\sim$} node[below]{$\iota$} (A4);
    
    \draw[left hook->,thick] (L3) -- (L2);
    \draw[right hook->,thick] (A3) -- (A2);
    \draw[->,thick] (L3) --node[above,rotate=90]{$\sim$}  (L4);
    \draw[->,thick] (A3) --node[right]{$\pi$} (A4);
    \end{tikzpicture}
\end{center}

\begin{lemma}
    Let 
    \[P: \LineBundlesabdouble \rightarrow \pretr( \LineBundlesab )\]
    be an \Ainf-functor
    such that 
    \[\Hzero( P ) \simeq (\iota^{-1} \circ \pi)|_{\LineBundlesabdouble}.\]
    Then the diagram
    \begin{center}
        \begin{tikzpicture}[label/.style={postaction={
        decorate,
        decoration={markings, mark=at position .5 with \node #1;}},
        mylabel/.style={thick, draw=none, align=center, minimum width=0.5cm, minimum height=0.5cm,fill=white}}]
        \coordinate (r) at (7,0);
        \coordinate (d) at (0,-2);
        \node (L) {$\Kb( \LineBundlesab )$};
        \node (A) at ($(L) + (r)$) {$\Kb( \LineBundlesab )$};
        
        \node (L2) at ($(L) + (d)$) {$\Db( \Pro^{a} \times \Pro^{b}) $};
        \node (A2) at ($(A) + (d)$) {$\Db( \Pro^{a} \times \Pro^{b}) $};
        
        \draw[->,thick] (L) --node[above]{$\Hzero( P^{\sharp})\circ (- \otimes K)$} (A);
        \draw[->,thick] (L2) --node[above]{$(- \otimesL K)$} (A2);
        
        \draw[->,thick] (L) --node[left]{$\iota$} (L2);
        \draw[->,thick] (A) --node[right]{$\iota$} (A2);
        
        \end{tikzpicture}
    \end{center}
    commutes up to natural isomorphism, where 
    \[P^{\sharp}: \pretr( \LineBundlesabdouble ) \rightarrow \pretr( \LineBundlesab )\]
    is the \Ainf-functor induced by the universal property of pretriangulated hulls applied to $P$.
\end{lemma}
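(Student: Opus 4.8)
The plan is first to identify $\Hzero(P^{\sharp})$ with $\iota^{-1}\circ\pi$ as exact functors $\Kb(\LineBundlesabdouble)\to\Kb(\LineBundlesab)$ (using $\Hzero(\pretr(\CC))\simeq\Kb(\CC)$ for a $k$-category $\CC$, and writing $\pi\colon\Kb(\LineBundlesabdouble)\to\Db(\Pro^a\times\Pro^b)$ for the natural functor from the diagram preceding the Lemma), and then to read off the claim from that diagram via
\[
    \iota\circ\Hzero(P^{\sharp})\circ(-\otimes K)\;\simeq\;\pi\circ(-\otimes K)\;\simeq\;(-\otimesL K)\circ\iota,
\]
where the first isomorphism uses that $\iota$ is an equivalence with quasi-inverse $\iota^{-1}$ and the second is the commutativity (up to natural isomorphism) of the left square of that diagram.

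For the identification, comparing the two functors only on the generating objects of $\LineBundlesabdouble$ does not suffice --- this is exactly the phenomenon of Corollary \ref{corollary:warning} --- so I would appeal to the rigidity statement of Theorem \ref{theorem:special_situation}. By Theorem \ref{theorem:lift_theorem}, $P^{\sharp}$ extends $P$ along the embedding $\LineBundlesabdouble\hookrightarrow\pretr(\LineBundlesabdouble)$, so $\Hzero(P^{\sharp})|_{\LineBundlesabdouble}\simeq\Hzero(P)\simeq(\iota^{-1}\circ\pi)|_{\LineBundlesabdouble}$ by the hypothesis on $P$. Next I would produce a second \Ainf-functor $Q\colon\pretr(\LineBundlesabdouble)\to\pretr(\LineBundlesab)$ with $\Hzero(Q)\simeq\iota^{-1}\circ\pi$: the natural functor $\pi$ is induced by a morphism $\Chbdg(\LineBundlesabdouble)\to\Dbdg(\Pro^a\times\Pro^b)$ in $\Hqe$ (compose the dg inclusion of complexes of line bundles with the derived-category morphism into $\Dbdg(\Pro^a\times\Pro^b)$), and, since $\Db(\Pro^a\times\Pro^b)$ has a strongly unique dg enhancement (Subsection \ref{subsection:inducing_fm}), this composes with an isomorphism $\Dbdg(\Pro^a\times\Pro^b)\xrightarrow{\sim}\pretr(\LineBundlesab)$ in $\Hqe$ compatible with the equivalences onto $\Db(\Pro^a\times\Pro^b)$; as morphisms in $\Hqe$ between dg categories are represented by \Ainf-functors, this yields $Q$, and in particular $\Hzero(Q)|_{\LineBundlesabdouble}\simeq(\iota^{-1}\circ\pi)|_{\LineBundlesabdouble}\simeq\Hzero(P^{\sharp})|_{\LineBundlesabdouble}$.

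It remains to check the hypotheses of Theorem \ref{theorem:special_situation} with $\AC=\LineBundlesabdouble$, $\BC=\pretr(\LineBundlesab)$ and the \Ainf-functors $P^{\sharp},Q$. For $A\in\LineBundlesabdouble$ we have $P^{\sharp}(A)=P(A)$ and, by the hypothesis on $P$, $\iota(P(A))\cong\pi(A)$, which is a coherent sheaf; hence $\CH^{j}\bigl(\Hom_{\pretr(\LineBundlesab)}(P(A),P(A'))\bigr)\cong\Ext^{j}_{\Db(\Pro^a\times\Pro^b)}(\pi(A),\pi(A'))=0$ for $j<0$, as $\Ext$ between coherent sheaves vanishes in negative degrees. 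Further, $\Hzero(\pretr(\LineBundlesab))\simeq\Db(\Pro^a\times\Pro^b)$ is idempotent complete (bounded derived category of an abelian category, \cite{BalSch01}), and $\Hzero(\pretr(\LineBundlesabdouble))\simeq\Kb(\LineBundlesabdouble)$ is idempotent complete because $\LineBundlesabdouble$ has finite-dimensional morphism spaces, so its additive hull is equivalent to the finitely generated projectives over a finite-dimensional algebra and $\Kb(\LineBundlesabdouble)$ is the thick, hence idempotent complete, subcategory of perfect complexes in the corresponding derived module category. Combined with $\Hzero(P^{\sharp})|_{\LineBundlesabdouble}\simeq\Hzero(Q)|_{\LineBundlesabdouble}$, Theorem \ref{theorem:special_situation} gives $P^{\sharp}=Q$ in $\Hqe$, so $\Hzero(P^{\sharp})\simeq\Hzero(Q)\simeq\iota^{-1}\circ\pi$, and the diagram chase above concludes.

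I expect the main obstacle to be the construction of the comparison \Ainf-functor $Q$ --- specifically, realising $\pi$ as the $\Hzero$ of a morphism in $\Hqe$ landing in the economical enhancement $\pretr(\LineBundlesab)$ rather than in $\Dbdg(\Pro^a\times\Pro^b)$, which forces the passage through strong uniqueness of the dg enhancement of $\Db(\Pro^a\times\Pro^b)$; once $Q$ is available, verifying the vanishing and idempotent-completeness hypotheses of Theorem \ref{theorem:special_situation} is routine.
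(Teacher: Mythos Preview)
Your proposal is correct and follows essentially the same approach as the paper: construct a comparison \Ainf-functor (the paper calls it $\Pi$, you call it $Q$) realising $\iota^{-1}\circ\pi$ via lifting to the dg level and invoking strong uniqueness of the enhancement, then apply Theorem~\ref{theorem:special_situation} to conclude $P^{\sharp}=Q$ in $\Hqe$. You are more explicit than the paper in verifying the negative-Ext vanishing and the idempotent-completeness hypotheses (the paper handles the latter by observing that $\Kb(\LineBundlesabdouble)$ is the bounded derived category of modules over an acyclic quiver with relations, while you argue via perfect complexes over a finite-dimensional algebra; both are fine).
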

\begin{proof}
    The functor $\pi: \Kb( \LineBundlesabdouble ) \rightarrow \Db( \Pro^{a} \times \Pro^{b})$
    arises as the composition
    \[
        \Kb( \LineBundlesabdouble ) \rightarrow \Kb( \Coh( \Pro^a \times \Pro^b ) ) \rightarrow \Db( \Pro^{a} \times \Pro^{b})
    \]
    of a full embedding and a Verdier localization functor.
    Since such functors can be lifted to the dg level \cite[Section 4.4]{Kel06},
    and since dg enhancements of $\Db( \Pro^{a} \times \Pro^{b})$ are strongly unique in our setup (Subsection \ref{subsection:inducing_fm}),
    it follows that
    there exists an \Ainf-functor
    \[
        \Pi: \pretr( \LineBundlesabdouble ) \rightarrow \pretr( \LineBundlesab )
    \]
    with $\Hzero(\Pi) \simeq \iota^{-1} \circ \pi$.
    The category $\Kb( \LineBundlesabdouble )$ is idempotent complete
    since it can be seen as the bounded derived category of finitely generated modules over an acyclic quiver with relations.
    Thus, we may apply Theorem \ref{theorem:special_situation} and deduce
    $\Hzero(P^{\sharp}) \simeq \Hzero(\Pi)$.
    Thus, the claim follows from the diagram depicted above this lemma.
\end{proof}

We sum up our modeling of Fourier-Mukai transforms for projective spaces.

\begin{theorem}\label{theorem:fm_for_pn}
    Let $K \in \Kb( \BLineBundles{a,b} ) \simeq \Db( \Pro^a \times \Pro^b )$, and let $\phi_K: \Db( \Pro^a ) \rightarrow \Db( \Pro^b )$
    denote the Fourier-Mukai transform with kernel $K$.
    Let $\overline{F}: \Kb( \BLineBundles{a} ) \rightarrow \Kb( \BLineBundles{a,b} )$
    denote the functor defined in Subsection \ref{subsection:pullback},
    and let $\overline{G}: \Kb( \BLineBundles{a,b} ) \rightarrow \Kb( \BLineBundles{b} )$
    denote the functor defined in Subsection \ref{subsection:direct_image}
    (but associated to the second projection $q: \Pro^a \times \Pro^b \rightarrow \Pro^b$ instead of the first one).
    Let 
    \[P: \LineBundlesabdouble \rightarrow \pretr( \LineBundlesab )\]
    be an \Ainf-functor
    such that 
    \[\Hzero( P ) \simeq (\iota^{-1} \circ \pi)|_{\LineBundlesabdouble}.\]
    Then the diagram 
    \begin{center}
        \begin{tikzpicture}[label/.style={postaction={
        decorate,
        decoration={markings, mark=at position .5 with \node #1;}},
        mylabel/.style={thick, draw=none, align=center, minimum width=0.5cm, minimum height=0.5cm,fill=white}}]
        \coordinate (r) at (9,0);
        \coordinate (d) at (0,-2);
        \node (L) {$\Kb( \BLineBundles{a} )$};
        \node (A) at ($(L) + (r)$) {$\Kb( \BLineBundles{b} )$};
        
        \node (L2) at ($(L) + (d)$) {$\Db( \Pro^{a}) $};
        \node (A2) at ($(A) + (d)$) {$\Db( \Pro^{b}) $};
        
        \draw[->,thick] (L) --node[above]{$\overline{G} \circ \Hzero( P^{\sharp})\circ (- \otimes K) \circ \overline{F}$} (A);
        \draw[->,thick] (L2) --node[above]{$\phi_K$} (A2);
        
        \draw[->,thick] (L) --node[rotate = 90,above]{$\sim$} (L2);
        \draw[->,thick] (A) --node[rotate = 90,below]{$\sim$} (A2);
        
        \end{tikzpicture}
    \end{center}
    commutes up to natural isomorphism.
\end{theorem}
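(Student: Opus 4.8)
The plan is to deduce the statement by pasting together the three squares (each commutative up to natural isomorphism) established earlier in the paper, one for each of the three elementary functors that a Fourier--Mukai transform is built from. By definition $\phi_K = \RDer(q_*)\bigl(K \otimesL p^*(-)\bigr)$; using the symmetry of the derived tensor product this factors, up to natural isomorphism, as
\[
\phi_K \;\simeq\; \RDer q_* \circ (- \otimesL K) \circ p^*,
\]
a composite $\Db(\Pro^a) \to \Db(\Pro^a \times \Pro^b) \to \Db(\Pro^a \times \Pro^b) \to \Db(\Pro^b)$.

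First I would record the three input squares. The Lemma of Subsection~\ref{subsection:pullback} gives $\iota \circ \overline{F} \simeq p^* \circ \iota$, with $\overline{F}\colon \Kb(\BLineBundles{a}) \to \Kb(\BLineBundles{a,b})$. The Corollary of Subsection~\ref{subsection:direct_image}, read for the second projection $q\colon \Pro^a \times \Pro^b \to \Pro^b$ in place of the first (the argument being symmetric in the two factors), gives $\iota \circ \overline{G} \simeq \RDer q_* \circ \iota$, with $\overline{G}\colon \Kb(\BLineBundles{a,b}) \to \Kb(\BLineBundles{b})$. Finally, the Lemma immediately preceding this theorem --- whose hypothesis on $P$ is exactly the one imposed here --- gives $\iota \circ \bigl(\Hzero(P^{\sharp}) \circ (- \otimes K)\bigr) \simeq (- \otimesL K) \circ \iota$ as functors $\Kb(\BLineBundles{a,b}) \to \Db(\Pro^a \times \Pro^b)$, where $\Hzero(P^{\sharp})$ is regarded as a functor $\Kb(\LineBundlesabdouble) \to \Kb(\BLineBundles{a,b})$ via the equivalences $\Hzero(\pretr(\AC)) \simeq \Kb(\AC)$ for a $k$-category $\AC$.

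Next I would check that the composite $\overline{G} \circ \Hzero(P^{\sharp}) \circ (- \otimes K) \circ \overline{F}$ is well-defined. The functor $\overline{F}$ replaces each summand $\OS(i)$ with $-a \leq i \leq 0$ by $\OS(i) \boxtimes \OS$, so its image lies in $\Kb(\BLineBundles{a,b})$; tensoring such a complex with $K$, whose summands are of the form $\OS(i') \boxtimes \OS(j')$ with $-a \leq i' \leq 0$ and $-b \leq j' \leq 0$, produces summands $\OS(i+i') \boxtimes \OS(j')$ with $-2a \leq i+i' \leq 0$ and $-b \leq j' \leq 0$, hence lands in $\Kb(\LineBundlesabdouble)$ --- this is exactly the range that the ``doubled'' category $\LineBundlesabdouble$ is introduced to accommodate, and it is the domain of $P$, hence of $P^{\sharp}$. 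The functor $\Hzero(P^{\sharp})$ then returns the complex to $\Kb(\BLineBundles{a,b})$, the domain of $\overline{G}$, so the composite is a well-defined functor $\Kb(\BLineBundles{a}) \to \Kb(\BLineBundles{b})$.

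The conclusion is then a matter of pasting. Whiskering the three squares together and invoking the symmetry of $\otimesL$ in the middle produces a chain of natural isomorphisms
\[
\iota \circ \overline{G} \circ \Hzero(P^{\sharp}) \circ (- \otimes K) \circ \overline{F}
\;\simeq\;
\RDer q_* \circ (- \otimesL K) \circ p^* \circ \iota
\;\simeq\;
\phi_K \circ \iota,
\]
and since each link of the chain is a natural isomorphism, their composite is one too --- which is precisely the commutativity (up to natural isomorphism) of the square in the statement. I do not anticipate any real obstacle here: the argument is a bookkeeping exercise in matching sources and targets. The substantive work is upstream --- in the explicit lift formulas of Theorem~\ref{theorem:lift_theorem} that make $P^{\sharp}$ accessible, and in the comparison Lemma preceding the theorem (resting in turn on Theorem~\ref{theorem:special_situation}) that identifies $\Hzero(P^{\sharp}) \circ (- \otimes K)$ with the derived tensor product.
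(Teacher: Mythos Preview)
Your proposal is correct and matches the paper's intent: the theorem is stated as a summary (``We sum up our modeling\ldots'') with no separate proof, being an immediate consequence of pasting the pullback square (Subsection~\ref{subsection:pullback}), the direct-image square (Subsection~\ref{subsection:direct_image}, adapted to $q$), and the tensor-product square from the Lemma immediately preceding the theorem. Your well-definedness check and the bookkeeping of sources and targets fill in exactly the details the paper leaves implicit.
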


\begin{remark}
    It follows that the task of finding an explicit formula for expressing
    an arbitrary Fourier-Mukai transform $\Db( \Pro^a ) \rightarrow \Db( \Pro^b )$ in terms of $\BLineBundles{a}$ and $\BLineBundles{b}$ boils down
    to finding an explicit formula for an \Ainf-functor
    $P: \LineBundlesabdouble \rightarrow \pretr( \LineBundlesab )$
    with $\Hzero( P ) \simeq \iota^{-1} \circ \pi$.
    Note that this task is independent of any prescribed Fourier-Mukai kernel.
\end{remark}

\def\cprime{$'$} \def\cprime{$'$} \def\cprime{$'$} \def\cprime{$'$}
  \def\cprime{$'$}
\providecommand{\bysame}{\leavevmode\hbox to3em{\hrulefill}\thinspace}
\providecommand{\MR}{\relax\ifhmode\unskip\space\fi MR }
\providecommand{\MRhref}[2]{%
  \href{http://www.ams.org/mathscinet-getitem?mr=#1}{#2}
}
\providecommand{\href}[2]{#2}

\end{document}